\documentclass[reqno,11pt]{amsart}

\usepackage{amsmath,amssymb,amsfonts,amsthm,amscd,amsxtra,mathrsfs,indentfirst}
\usepackage[top=3cm,bottom=3.5cm,right=2.5cm,left=2.2cm]{geometry}
\usepackage{mathtools,etoolbox}
\usepackage{esint,slashed,graphicx,caption,xcolor,booktabs}
\usepackage{enumerate}
\usepackage[hyphens]{url}
\usepackage{hyperref}
\usepackage{bookmark}

\allowdisplaybreaks
\mathtoolsset{showonlyrefs=true}
\DeclareRobustCommand{\textsection}{\ifmmode\mathsection\else\S\fi}

\usepackage[
    backend=biber,
    style=numeric,
    sorting=nyt,
    maxbibnames=99,
    giveninits=true,
]{biblatex}
\renewbibmacro{in:}{}
\DeclareFieldFormat[article,inproceedings,incollection,unpublished]{title}{#1}
\DeclareFieldFormat[article,inproceedings,incollection,unpublished]{citetitle}{#1}
\DeclareSourcemap{
    \maps[datatype=bibtex]{
        \map{\step[fieldsource=shortjournal,fieldtarget=journaltitle]}
    }
}

\newtheorem{theorem}{Theorem}[section]
\newtheorem{lemma}[theorem]{Lemma}
\newtheorem{proposition}[theorem]{Proposition}

\newtheorem{corollary}[theorem]{Corollary}
\newtheorem{remark}[theorem]{Remark}
\newtheorem{example}[theorem]{Example}

\newcommand{\abs}[1]{\lvert#1\rvert}
\newcommand{\Abs}[1]{\left\lvert#1\right\rvert}

\newcommand{\norm}[1]{\lVert#1\rVert}

\newcommand{\rd}{{\rm d}}
\newcommand{\rdA}{\rd A}

\newcommand{\rD}{{\rm D}}

\newcommand{\rtr}{{\rm tr}}
\newcommand{\rdiv}{{\rm div}}

\newcommand{\p}{\partial}
\def\<{\langle}
\def\>{\rangle}
\def\S{{\mathbb S}}
\def\R{\mathbb{R}}
\def\C{{\mathbb C}}

\newcommand{\Sph}{\S^{N-1}}
\newcommand{\E}{\mathcal{E}}
\newcommand{\Ein}{\mathcal{E}_{\mathrm{in}}}
\newcommand{\G}{\mathcal{G}}
\newcommand{\Cc}{\mathcal{C}}

\newcommand{\eq}[1]{\begin{equation}\allowdisplaybreaks\begin{alignedat}{2} #1 \end{alignedat}\end{equation}}
\numberwithin{equation}{section}

\begin{document}

\title[An $L^2$ Ahlfors--Beurling inequality in every dimension]
{An $L^2$ Ahlfors--Beurling inequality in every dimension}

\author{Guofang Wang}
\address{Albert-Ludwigs-Universit\"at Freiburg, Mathematisches Institut,\newline
Ernst-Zermelo-Str. 1, D-79104 Freiburg, Germany}
\email{guofang.wang@math.uni-freiburg.de}

\author{Mingwei Zhang}
\address{Wuhan University, School of Mathematics and Statistics,
430072 Wuhan, China and Albert-Ludwigs-Universit\"at Freiburg, Mathematisches Institut,\newline
Ernst-Zermelo-Str. 1, D-79104 Freiburg, Germany}
\email{zhangmwmath@whu.edu.cn}

\date{October 2026}
\subjclass[2020]{31A10, 31B10, 30E20, 53C65}
\keywords{Cauchy transform, Ahlfors--Beurling inequality, Bergman analytic content, Newtonian potential, isoperimetric inequality}

\begin{abstract}
Let $\Omega\subset\R^N$ ($N\geq2$) be measurable with $0<\abs{\Omega}<\infty$, and let $u_\Omega$ be its logarithmic potential ($N=2$) or Newtonian potential ($N\ge3$), normalized so that $\Delta u_\Omega=\chi_\Omega$. We prove the sharp inequality
\eq{
    \int_\Omega\abs{\nabla u_\Omega}^2\,\rd x
    \leq\frac{1}{2N\omega_{N-1}}
    \iint_{\Omega\times\Omega}\frac{\rd x\,\rd y}{\abs{x-y}^{N-2}},
}
where $\omega_{N-1}=|\S^{N-1}|$, with equality if and only if $\Omega$ is a ball up to a null set. In dimension two this settles the $L^2$ Ahlfors--Beurling conjecture of Fleeman and Khavinson. For $N\ge3$ it is equivalent to the scale-free bound that at most the fraction $(N-2)/(2N)$ of the total field energy lies inside $\Omega$. We also obtain an explicit deficit identity, a sharp extension to bounded nonnegative densities, and consequences for torsional rigidity, Bergman analytic content, Brownian motion, and aggregation equations.
\end{abstract}

\maketitle
\tableofcontents

\section{Introduction}\label{sec:results}

Let $\Omega\subset\C$ be measurable with $0<\abs{\Omega}<\infty$. Its Cauchy transform is
\eq{
    \Cc_\Omega(\zeta)=\frac{1}{\pi}\int_\Omega\frac{\rdA(z)}{\zeta-z},
    \qquad\zeta\in\C.
}
The Ahlfors--Beurling inequality \cite{AhlforsBeurling50} asserts that
\eq{\label{eq:AB}
    \abs{\Cc_\Omega(\zeta)}\leq\sqrt{\frac{\abs{\Omega}}{\pi}}\qquad(\zeta\in\C),
}
with equality if and only if $\Omega$ agrees a.e.\ with a disc whose boundary contains $\zeta$. This yields Alexander's sharp upper bound for analytic content \cite{Al} and, through it, a proof of the isoperimetric inequality \cite{GK,BK}.

Fleeman and Khavinson \cite{FK} proposed the following mean-square strengthening:
\eq{\label{eq:FK}
    \norm{\Cc_\Omega}_{L^2(\Omega)}\leq\frac{\abs{\Omega}}{\sqrt{2\pi}}.
}
The inequality is sharp, with equality for discs. By contrast, integrating \eqref{eq:AB} over $\Omega$ gives only
\eq{
    \int_\Omega\abs{\Cc_\Omega}^2\,\rdA\leq\frac{\abs{\Omega}^2}{\pi},
}
which is off by a factor $2$. As observed in \cite{FK}, \eqref{eq:FK} implies the sharp Bergman analytic content bound and the Saint-Venant inequality; the Bergman bound itself was obtained in \cite{FK} from the sharp Hankel estimate of Olsen and Reguera \cite{OR}.

Estimating $\norm{\Cc_\Omega}_{L^2(\Omega)}$ via the $L^2(\Omega)\to L^2(\Omega)$ operator norm of the interior Cauchy transform is not sharp either. Even for a disc of radius $r$, that operator norm equals $2r/j_0$, where $j_0$ is the first positive zero of $J_0$ \cite{AKL}, leading to a constant larger than that in \eqref{eq:FK} by the factor $2\sqrt2/j_0>1$. Moreover, the identity $\norm{\Cc}_{L^2\to L^2}=2/\sqrt{\Lambda_1(\Omega)}$ claimed in \cite{Dos} fails in general; Kalaj \cite{Ka} proved strict inequality for bounded simply connected domains with smooth boundary other than discs, and for annuli. Here $\Lambda_1(\Omega)$ denotes the first Dirichlet eigenvalue. The present paper instead estimates the single function $\Cc_\Omega=\Cc\chi_\Omega$.

In this paper, we prove \eqref{eq:FK}, characterize the equality case, and extend the estimate to every dimension. Throughout, let $N\geq2$, and set
\eq{
    \omega_{N-1}=|\S^{N-1}|=\frac{2\pi^{N/2}}{\Gamma(N/2)},
    \qquad s_N=\frac{\omega_{N-1}}{N}.
}
Thus $s_N$ is the volume of the unit ball in $\R^N$. Let $\Omega\subset\R^N$ be measurable with $0<\abs{\Omega}<\infty$. Define
\eq{
    u_\Omega(x)\coloneqq \frac{1}{2\pi}\int_\Omega
    \log\frac{\abs{x-y}}{1+\abs{y}}\,\rd y \qquad \text{for}\quad N=2,
}
and
\eq{
    u_\Omega(x)\coloneqq -\frac{1}{(N-2)\omega_{N-1}}\int_\Omega
    \frac{\rd y}{\abs{x-y}^{N-2}} \qquad \text{for}\quad N\geq3.
}
The planar normalization makes the integral converge for every set of finite area; if $\int_\Omega\log(1+\abs{y})\,\rd y<\infty$, it differs from the usual logarithmic potential by an additive constant. In every dimension,
\eq{\label{eq:grad}
    \nabla u_\Omega(x)=\frac{1}{\omega_{N-1}}\int_\Omega
    \frac{x-y}{\abs{x-y}^N}\,\rd y,
    \qquad\Delta u_\Omega=\chi_\Omega.
}
See Lemma~\ref{lem:pot} for the regularity and decay properties of $u_\Omega$. For $N=2$, one has $\abs{\Cc_\Omega}=2\abs{\nabla u_\Omega}$, so \eqref{eq:FK} is equivalent to
\eq{\label{eq:FK2}
    \int_\Omega\abs{\nabla u_\Omega}^2\,\rd x\leq\frac{\abs{\Omega}^2}{8\pi}.
}
Set
\eq{
    \Ein(\Omega)=\int_\Omega\abs{\nabla u_\Omega}^2\,\rd x,
    \qquad I(\Omega)=\iint_{\Omega\times\Omega}
    \frac{\rd x\,\rd y}{\abs{x-y}^{N-2}}.
}
Up to physical constants, $\nabla u_\Omega$ is the electrostatic field of a body of unit charge density, and $\Ein(\Omega)$ is its interior field energy.
For $N=2$, $I(\Omega)=\abs{\Omega}^2$. For $N\geq3$, the total field energy is
\eq{
    \E(\Omega)=\int_{\R^N}\abs{\nabla u_\Omega}^2\,\rd x
    =\frac{I(\Omega)}{(N-2)\omega_{N-1}}.
}
For a ball $B$ centred at $c$, one has $\nabla u_B(x)=(x-c)/N$ for $x\in B$.

\begin{theorem}\label{thm:main}
For every measurable $\Omega\subset\R^N$ with $0<\abs{\Omega}<\infty$, we have
\eq{\label{eq:main}
    \int_\Omega\abs{\nabla u_\Omega}^2\,\rd x
    \leq\frac{1}{2N\omega_{N-1}}\iint_{\Omega\times\Omega}
    \frac{\rd x\,\rd y}{\abs{x-y}^{N-2}},
}
with equality if and only if $\Omega$ is a ball up to a null set. In particular,
\begin{enumerate}[(i)]
\item if $N=2$, then $\displaystyle
\int_\Omega\abs{\Cc_\Omega}^2\,\rdA\leq\frac{\abs{\Omega}^2}{2\pi}$;
\item if $N\geq3$, then $\displaystyle
\Ein(\Omega)\leq\frac{N-2}{2N}\E(\Omega)$.
\end{enumerate}
\end{theorem}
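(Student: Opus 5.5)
The plan is to reduce \eqref{eq:main} to a sign statement for a single harmonic correction, and then prove that sign statement. The guiding observation is the ball: there $\nabla u_B(x)=(x-c)/N$ on $B$. Set $h(x)=u_\Omega(x)-\abs{x}^2/(2N)$, so that $\Delta h=-\chi_{\R^N\setminus\Omega}$ and $h$ is harmonic in the interior of $\Omega$.

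\textbf{Step 1: rewrite the right-hand side.} Antisymmetry of the kernel in \eqref{eq:grad} gives $\int_\Omega\nabla u_\Omega\,\rd x=0$. Symmetrizing in $x,y$ gives
\[
\int_\Omega x\cdot\nabla u_\Omega\,\rd x=\frac{1}{\omega_{N-1}}\iint_{\Omega\times\Omega}\frac{x\cdot(x-y)}{\abs{x-y}^N}\,\rd x\,\rd y=\frac{I(\Omega)}{2\omega_{N-1}}.
\]
Hence \eqref{eq:main} is equivalent to the translation-invariant statement
\[
D(\Omega)\coloneqq\frac1N\int_\Omega x\cdot\nabla u_\Omega\,\rd x-\int_\Omega\abs{\nabla u_\Omega}^2\,\rd x=-\int_\Omega\nabla u_\Omega\cdot\nabla h\,\rd x\;\ge0,
\]
with equality exactly for balls. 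The regularity and decay facts from Lemma~\ref{lem:pot} justify all integrations over $\R^N$. For $N=2$ I would work with $\nabla u_\Omega$ only, since it decays like $\abs{x}^{-1}$, and use a cutoff where the plain energy diverges.

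\textbf{Step 2: move the defect onto $\R^N\setminus\Omega$.} The quantity $D(\Omega)$ is a cubic (trilinear) form in $\chi_\Omega$, so it has to be split into positive pieces. Writing $\chi_\Omega=\Delta u_\Omega$ and using the stress-tensor identity $\Delta u\,\partial_iu=\partial_j(\partial_iu\,\partial_ju)-\tfrac12\partial_i\abs{\nabla u}^2$, one can move derivatives onto $h$. This only reproduces the Pohozaev identity $\int_\Omega x\cdot\nabla u=\tfrac{N-2}{2}\E$, so it does not by itself decide the sign. The genuinely new input must come from the structure of $h$: $h$ is superharmonic (since $\Delta h\le0$) and harmonic on $\Omega$. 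I would therefore express $D(\Omega)$ through the field $\nabla h$ sourced by $\Omega^c$, namely
\[
\nabla h(x)=-\frac{1}{\omega_{N-1}}\int_{\Omega^c}\frac{x-y}{\abs{x-y}^N}\,\rd y
\]
in the weak sense, renormalized via balls $B_R\setminus\Omega$ with $R\to\infty$. Substituting, $D(\Omega)$ becomes an explicit triple integral over $(x,y,z)\in\Omega\times\Omega\times\Omega^c$ against the kernel $\frac{(x-y)\cdot(x-z)}{\abs{x-y}^N\abs{x-z}^N}$. The aim is to recombine this, by averaging over the sphere centred at $x$, into an expression manifestly nonnegative. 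This would be the deficit identity announced in the abstract, for instance a sum of squares of the form $\int\abs{\nabla h-\overline{\nabla h}}^2$ over a suitable region plus a boundary-type term that vanishes only for balls.

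\textbf{Step 3: the main obstacle.} Finding this positivity is the step I expect to be hardest. Integrating the pointwise Ahlfors--Beurling bound loses a factor $2$, so no pointwise estimate will suffice. My first attempt would be the following. For fixed $x\in\Omega$, decompose $\nabla u_\Omega(x)$ into the contribution of the ball $B(x,r(x))$ with $\abs{B(x,r(x))}=\abs{\Omega}$ and a remainder from $\Omega\triangle B(x,r(x))$. The mass transported out of the ball to larger radii can only decrease the radial component, by Newton's theorem for spherical shells. Then integrate in $x$ and control the cross terms by symmetrization in $(x,y)$.

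If that fails, the fallback is to show that $D$ does not increase under Steiner symmetrization via a Riesz-type rearrangement inequality for the trilinear form, reducing to balls. Strictness in the equality case would then follow from the strict Riesz inequality, since the kernel is strictly decreasing. For this route to work, the form must first be rewritten so that all kernels are positive and radially decreasing.
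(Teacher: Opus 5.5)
Your Step~1 is correct. It is the virial reformulation in Remark~\ref{rem:geometric}; for unbounded $\Omega$ you would have to justify $\int_\Omega x\cdot\nabla u_\Omega\,\rd x$ by exhausting with $\Omega\cap B_k$, since it need not converge absolutely. The gap is everything after that. Steps~2 and~3 never give a reason why $D(\Omega)\geq0$, and none of the routes you suggest supplies one.

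\begin{itemize}
\item \emph{Comparison with $B(x,r(x))$.} This gains nothing, because the ball centred at $x$ contributes zero to $\nabla u_\Omega(x)$. What remains is a bound on $\abs{\nabla u_\Omega(x)}$ in terms of $\abs{\Omega}$ alone, i.e.\ an Ahlfors--Beurling/Martin estimate. Any such $x$-independent bound is lossy after integration: already for a ball, $\abs{\nabla u_B(x)}=\abs{x-c}/N$ varies across $B$.
\item \emph{Steiner fallback.} The trilinear kernel $\<x-y,x-z\>\abs{x-y}^{-N}\abs{x-z}^{-N}$ of $\Ein$ changes sign, so no Riesz or Brascamp--Lieb--Luttinger inequality applies to it (Remark~\ref{rem:scope}). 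Moreover, for $N\geq3$ the term $I(\Omega)$ in $D$ also increases under symmetrization. So even a monotonicity result for $\Ein$ would give no sign for $D$, only the weaker fixed-volume bound of Corollary~\ref{cor:ball}.
\item \emph{Equality case.} The proposal contains no argument for it.
\end{itemize}

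The missing idea is to average over the sphere centred at $x$ for $\nabla u_\Omega(x)$ itself, not for a triple integral over $\Omega\times\Omega\times\Omega^c$. In polar coordinates $y=x+r\theta$ the Jacobian $r^{N-1}$ cancels the kernel exactly, so
\[
\nabla u_\Omega(x)=-\frac{1}{2\omega_{N-1}}\int_{\Sph}\theta\,\delta(x,\theta)\,\rd\theta,\qquad
\abs{\nabla u_\Omega(x)}^2=\frac{\norm{P_1\delta(x,\cdot)}^2}{4N\omega_{N-1}}\leq\frac{\norm{\delta(x,\cdot)}^2}{4N\omega_{N-1}}.
\]
Here $\delta(x,\theta)$ is the difference of the lengths of the rays from $x$ in directions $\pm\theta$ inside $\Omega$, $P_1$ projects $L^2(\Sph)$ onto linear functions, and the inequality is Bessel's.

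The right-hand side integrates exactly along lines. On a section $E$ of length $\sigma$, with $F(t)=\abs{E\cap(-\infty,t)}$, one has $\delta=\sigma-2F(t)$ and $\int_E(\sigma-2F)^2\,\rd t=\sigma^3/3$. On the other side, the line formula of Lemma~\ref{lem:BP} gives $I(\Omega)=\frac12\int_{\Sph}\int_{\theta^\perp}\iint_{E\times E}\abs{s-t}\,\rd s\,\rd t\,\rd p\,\rd\theta$. Since $\abs{s-t}\geq\abs{F(s)-F(t)}$, one gets $\iint_{E\times E}\abs{s-t}\geq\sigma^3/3$. Hence
\[
\Ein(\Omega)\leq\frac{1}{12N\omega_{N-1}}\int_{\Sph}\int_{\theta^\perp}\sigma(\theta,p)^3\,\rd p\,\rd\theta\leq\frac{I(\Omega)}{2N\omega_{N-1}}.
\]
The deficit is the sum of two nonnegative losses: the spherical-harmonic components of $\delta(x,\cdot)$ of degree $\geq3$, and the one-dimensional nonconvexity $\G$. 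No rearrangement is used.

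Equality then needs its own argument:
\begin{itemize}
\item vanishing of the $\G$-term yields a convex representative $K$;
\item linearity of $\delta$ on $K$ gives that $D^2u_K$ is $\frac1N$ times the identity;
\item after a translation, the angular derivatives $(x_i\p_j-x_j\p_i)u_K$ vanish on $K$ and are harmonic and decaying outside $\overline K$;
\item the maximum principle then makes $u_K$ rotation invariant, so $K$ is a ball.
\end{itemize}
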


For $N\geq3$, Theorem~\ref{thm:main} says that at least the fraction $\frac{N+2}{2N}$ of the total field energy lies outside $\Omega$; in $\R^3$ the interior fraction is at most $1/6$.
In the plane, if $\chi_\Omega$ is the vorticity of an incompressible flow, then $v=\nabla^\perp u_\Omega$ and \eqref{eq:FK2} bounds the kinetic energy stored inside the patch. The sharp maximizer is the disc. For an ellipse with semiaxes $a,b$, the interior energy equals $\abs{\Omega}^2\,(4ab/(a+b)^2)/(8\pi)$ (Example~\ref{ex:ellipsoids}). The total planar kinetic energy is infinite; the finite logarithmic interaction energy considered in Section~\ref{sec:apps} is a different functional.

Our proof represents $\nabla u_\Omega$ in terms of ray lengths and combines Bessel's inequality on the sphere with an integral-geometric averaging over lines. This yields the following deficit identity.
For $x\in\R^N$ and $\theta\in\Sph$, set
\eq{
    \lambda(x,\theta)=\abs{\{r>0:x+r\theta\in\Omega\}},
    \qquad\delta(x,\theta)=\lambda(x,\theta)-\lambda(x,-\theta).
}
For each fixed $x$, both $\lambda(x,\theta)$ and $\delta(x,\theta)$ are finite for a.e.\ $\theta$. Moreover, $\delta(x,\cdot)\in L^2(\Sph)$ for a.e.\ $x\in\Omega$ (see Lemma~\ref{lem:rays} and Section~\ref{sec:proof}). Let $P_1$ be the orthogonal projection of $L^2(\Sph)$ onto $\operatorname{span}\{\theta_1,\ldots,\theta_N\}$, and let $\norm{\cdot}$ denote the $L^2(\Sph)$ norm. For $p\in\theta^\perp$, set
\eq{
    E_{\theta,p}=\{t\in\R:p+t\theta\in\Omega\},
    \qquad\sigma(\theta,p)=\abs{E_{\theta,p}}.
}
For measurable $E\subset\R$ of finite length $\sigma$, define
\eq{\label{G}
    \G(E)=\iint_{E\times E}\abs{s-t}\,\rd s\,\rd t-\frac{\sigma^3}{3}.
}
By Lemma~\ref{lem:1d}, one has $\G(E)\geq0$, with equality if and only if $E$ agrees a.e.\ with an interval.

\begin{theorem}\label{thm:deficit}
Under the hypotheses of Theorem~\ref{thm:main},
\eq{\label{eq:id1}
    \Ein(\Omega)=\frac{1}{12N\omega_{N-1}}\int_{\Sph}\int_{\theta^\perp}
    \sigma(\theta,p)^3\,\rd p\,\rd\theta-\frac{1}{4N\omega_{N-1}}\int_\Omega
    \norm{\delta(x,\cdot)-P_1\delta(x,\cdot)}^2\,\rd x,
}
and
\eq{\label{eq:id2}
    \frac{I(\Omega)}{2N\omega_{N-1}}=\frac{1}{12N\omega_{N-1}}\int_{\Sph}\int_{\theta^\perp}
    \sigma(\theta,p)^3\,\rd p\,\rd\theta+\frac{1}{4N\omega_{N-1}}\int_{\Sph}\int_{\theta^\perp}
    \G(E_{\theta,p})\,\rd p\,\rd\theta.
}
Consequently,
\eq{\label{eq:deficit}
    \mathcal{D}(\Omega)&\coloneqq \frac{I(\Omega)}{2N\omega_{N-1}}-\Ein(\Omega)\\
    &=\frac{1}{4N\omega_{N-1}}\left[
    \int_\Omega\norm{\delta(x,\cdot)-P_1\delta(x,\cdot)}^2\,\rd x
    +\int_{\Sph}\int_{\theta^\perp}\G(E_{\theta,p})\,\rd p\,\rd\theta
    \right].
}
\end{theorem}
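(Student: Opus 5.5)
The plan is to derive both identities from a single polar-coordinate computation centred at $x\in\Omega$, reduce everything to one-dimensional integrals along lines, and then subtract.

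\emph{Step 1: ray representation of the field.} In \eqref{eq:grad} I substitute $y=x+r\theta$, with $\rd y=r^{N-1}\,\rd r\,\rd\theta$. Since $(x-y)/\abs{x-y}^N=-\theta/r^{N-1}$, integrating in $r$ gives
\eq{
\nabla u_\Omega(x)=-\frac{1}{\omega_{N-1}}\int_{\Sph}\theta\,\lambda(x,\theta)\,\rd\theta
=-\frac{1}{2\omega_{N-1}}\int_{\Sph}\theta\,\delta(x,\theta)\,\rd\theta .
}
The second equality follows by symmetrizing under $\theta\mapsto-\theta$. The functions $\sqrt{N/\omega_{N-1}}\,\theta_i$ are orthonormal in $L^2(\Sph)$, so
\eq{
\norm{P_1\delta(x,\cdot)}^2=\frac{N}{\omega_{N-1}}\Abs{\int_{\Sph}\theta\,\delta(x,\theta)\,\rd\theta}^2=4N\omega_{N-1}\abs{\nabla u_\Omega(x)}^2 .
}
By Pythagoras, $\abs{\nabla u_\Omega(x)}^2=\bigl(\norm{\delta(x,\cdot)}^2-\norm{\delta(x,\cdot)-P_1\delta(x,\cdot)}^2\bigr)/(4N\omega_{N-1})$ for a.e.\ $x\in\Omega$. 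Here I use Lemma~\ref{lem:rays} and Lemma~\ref{lem:pot} for the a.e.\ finiteness and validity of \eqref{eq:grad}.

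\emph{Step 2: the $L^2$ norm of $\delta$ along lines.} I write $\int_\Omega\int_{\Sph}\delta^2\,\rd\theta\,\rd x=\int_{\Sph}\int_{\theta^\perp}\int_{E_{\theta,p}}\delta(p+t\theta,\theta)^2\,\rd t\,\rd p\,\rd\theta$ by Fubini. On the line, set $f(t)=\abs{E\cap(-\infty,t)}$. Then $\lambda(\cdot,\theta)=\sigma-f$ and $\lambda(\cdot,-\theta)=f$, so $\delta=\sigma-2f$. Since $f$ is absolutely continuous with $f'=\chi_E$, the coarea (change of variables $s=f(t)$ on $E$) gives
\eq{
\int_E(\sigma-2f)^2\,\rd t=\int_0^\sigma(\sigma-2s)^2\,\rd s=\frac{\sigma^3}{3}.
}
Hence $\int_\Omega\norm{\delta}^2=\frac13\int_{\Sph}\int_{\theta^\perp}\sigma^3$. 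Integrating Step 1 over $\Omega$ then yields \eqref{eq:id1}.

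\emph{Step 3: the interaction integral.} Again with $y=x+r\theta$, we have $\abs{x-y}^{2-N}r^{N-1}=r$. Therefore
\eq{
I(\Omega)=\int_{\Sph}\int_\Omega\int_0^\infty r\,\chi_\Omega(x+r\theta)\,\rd r\,\rd x\,\rd\theta .
}
Decomposing $x=p+t\theta$ turns the inner integrals into $\iint_{E\times E,\,s>t}(s-t)\,\rd s\,\rd t=\frac12\iint_{E\times E}\abs{s-t}$. Thus $I(\Omega)/(2N\omega_{N-1})=\frac{1}{4N\omega_{N-1}}\int_{\Sph}\int_{\theta^\perp}\bigl(\G(E_{\theta,p})+\sigma^3/3\bigr)$ by the definition \eqref{G}, which is \eqref{eq:id2}. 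Subtracting \eqref{eq:id1} from \eqref{eq:id2} gives \eqref{eq:deficit}.

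\emph{Main obstacle.} The main difficulty is justifying the manipulations when terms might be infinite, so that no $\infty-\infty$ occurs. All integrands in Steps 2--3 are nonnegative, so Tonelli applies freely. The point is that $I(\Omega)<\infty$. For $N=2$ this is immediate since $I(\Omega)=\abs{\Omega}^2$. For $N\ge3$ I would bound $I(\Omega)$ by its value for the ball of equal volume (Riesz rearrangement), or split the kernel at $\abs{x-y}=1$. Then \eqref{eq:id2} together with $\G\ge0$ from Lemma~\ref{lem:1d} shows $\int\!\!\int\sigma^3<\infty$. Step 2 then gives $\delta(x,\cdot)\in L^2(\Sph)$ for a.e.\ $x$ with integrable norm, so every term in \eqref{eq:id1} is finite and the subtraction is legitimate.
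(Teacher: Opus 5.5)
Your proposal is correct and follows the paper's proof essentially step for step. It uses the ray representation of $\nabla u_\Omega$ with the projection $P_1$ and Pythagoras, the line computation $\int_E(\sigma-2F)^2\,\rd t=\sigma^3/3$, the chord-power formula for $I(\Omega)$, and finiteness of $\int_\Omega\norm{\delta(x,\cdot)}^2\,\rd x$ deduced from $I(\Omega)<\infty$; the only cosmetic difference is that you bound $I(\Omega)$ by splitting the kernel (or by Riesz rearrangement), whereas the paper uses the bathtub principle in Lemma~\ref{lem:pot}(a).
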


Both terms in \eqref{eq:deficit} are nonnegative. The second term vanishes precisely when almost every line section agrees a.e.\ with an interval; in particular, this forces $\Omega$ to agree a.e.\ with a convex set. The first term vanishes when the ray asymmetry $\delta(x,\cdot)$ is linear in $\theta$ for a.e.\ $x\in\Omega$.
For a ball centred at $c$, one has $\delta(x,\theta)=-2\<x-c,\theta\>$ and both terms vanish. Conversely, equality forces $D^2u_\Omega=I/N$ in the convex representative; a maximum-principle argument for angular derivatives then yields a ball (see Section~\ref{sec:rigidity}). The common term in \eqref{eq:id1}--\eqref{eq:id2} yields the intermediate estimate
\eq{
    \Ein(\Omega)\leq\frac{1}{12N\omega_{N-1}}\int_{\Sph}\int_{\theta^\perp}
    \sigma(\theta,p)^3\,\rd p\,\rd\theta\leq\frac{I(\Omega)}{2N\omega_{N-1}}.
}
For convex sets the second inequality is an equality (the chord power formula \cite{San}); in the plane this reduces to Crofton's identity $\int_{\S^1}\int_\R\sigma^3\,\rd p\,\rd\theta=6\abs{\Omega}^2$.

\begin{remark}\label{rem:deficitforms}
The nonconvexity term also has the form
\eq{\label{eq:pair}
    \frac{1}{4N\omega_{N-1}}\int_{\Sph}\int_{\theta^\perp}\G(E_{\theta,p})\,\rd p\,\rd\theta
    =\frac{1}{2N\omega_{N-1}}\iint_{\Omega\times\Omega}
    \frac{\mathcal{H}^1([x,y]\backslash\Omega)}{\abs{x-y}^{N-1}}\,\rd x\,\rd y.
}
This follows from Lemma~\ref{lem:BP} and the expression for $\G$ in
Lemma~\ref{lem:1d}(c). Since $\delta(x,-\theta)=-\delta(x,\theta)$, its spherical
harmonic expansion contains only odd degrees. If $P_\ell$ denotes projection
onto harmonics of degree $\ell$, then
\eq{
    \norm{\delta(x,\cdot)-P_1\delta(x,\cdot)}^2
    =\sum_{\substack{\ell\geq3\\\ell\ \mathrm{odd}}}
    \norm{P_\ell\delta(x,\cdot)}^2.
}
For an ellipsoid the pair term vanishes, so only the spherical harmonics of odd degree at least three contribute; see Example~\ref{ex:ellipsoids}.
\end{remark}

\begin{remark}\label{sec:norearr}\label{rem:scope}
The interior energy is trilinear in $\chi_\Omega$:
\eq{
    \Ein(\Omega)=\frac{1}{\omega_{N-1}^2}\iiint_{\Omega\times\Omega\times\Omega}
    \frac{\<x-y,x-z\>}{\abs{x-y}^N\abs{x-z}^N}\,\rd x\,\rd y\,\rd z.
}
Its kernel changes sign, so the Riesz--Sobolev and Brascamp--Lieb--Luttinger
inequalities \cite{Ri,So,BLL} do not directly give \eqref{eq:main}. Our proof uses no symmetrization. Instead, the polar Jacobian cancels the factor $\abs{x-y}^{1-N}$, and the line Jacobian cancels $\abs{x-y}^{2-N}$ (see Lemmas~\ref{lem:rays} and~\ref{lem:BP}). These cancellations are specific to the logarithmic and Newtonian potentials.
\end{remark}

As a consequence, we obtain the sharp bound for the interior energy at fixed volume.
Let $\Omega^*$ be the ball centered at $0$ with $\abs{\Omega^*}=\abs{\Omega}$.
For $N\geq3$, the Riesz--Sobolev inequality gives $I(\Omega)\leq I(\Omega^*)$;
for $N=2$ these quantities are equal.

\begin{corollary}\label{cor:ball}
For every measurable $\Omega\subset\R^N$ with $0<\abs{\Omega}<\infty$, we have
\eq{
    \Ein(\Omega)&\leq\frac{I(\Omega)}{2N\omega_{N-1}}
    \leq\frac{I(\Omega^*)}{2N\omega_{N-1}}
    =\Ein(\Omega^*)\\
    &=\frac{\abs{\Omega}}{N(N+2)}
    \left(\frac{\abs{\Omega}}{s_N}\right)^{2/N},
}
with equality between the outer terms if and only if $\Omega$ is a ball up to a set of measure zero.
\end{corollary}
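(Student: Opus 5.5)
The corollary chains Theorem~\ref{thm:main}, the Riesz--Sobolev inequality and an explicit computation for the ball. I would carry out the four steps below.

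\emph{Step 1 (first inequality).} The bound $\Ein(\Omega)\le I(\Omega)/(2N\omega_{N-1})$ is exactly \eqref{eq:main} of Theorem~\ref{thm:main}. Equality holds there if and only if $\Omega$ is a ball up to a null set.

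\emph{Step 2 (second inequality).}
\begin{itemize}
\item For $N=2$ one has $I(\Omega)=\abs{\Omega}^2=\abs{\Omega^*}^2=I(\Omega^*)$, so this step is an equality.
\item For $N\ge3$ the kernel $\abs{x-y}^{2-N}$ is symmetric decreasing. The Riesz rearrangement inequality applied to $\chi_\Omega,\chi_\Omega$ and this kernel gives $I(\Omega)\le I(\Omega^*)$.
\item No equality analysis is needed here: if the outer terms are equal, every intermediate inequality is an equality. In particular Step 1 is an equality, which already forces $\Omega$ to be a ball a.e.
\end{itemize}

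\emph{Step 3 (equality $I(\Omega^*)/(2N\omega_{N-1})=\Ein(\Omega^*)$).} This is the equality case of Theorem~\ref{thm:main} applied to the ball $\Omega^*$.

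\emph{Step 4 (explicit value).} Use $\nabla u_{\Omega^*}(x)=x/N$ on $\Omega^*$, stated before Theorem~\ref{thm:main}. Let $R$ be the radius, so $s_NR^N=\abs{\Omega}$. Then
\[
\Ein(\Omega^*)=\frac{1}{N^2}\int_{\Omega^*}\abs{x}^2\,\rd x=\frac{1}{N^2}\,\omega_{N-1}\frac{R^{N+2}}{N+2}.
\]
Writing $\omega_{N-1}=Ns_N$, this becomes
\[
\Ein(\Omega^*)=\frac{s_NR^N\,R^2}{N(N+2)}=\frac{\abs{\Omega}}{N(N+2)}\left(\frac{\abs{\Omega}}{s_N}\right)^{2/N}.
\]

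\emph{Conversely.} If $\Omega$ is a ball a.e., then $\Ein$ and $I$ are invariant under translation and null modifications, so the outer terms agree.

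There is no real obstacle beyond citing Riesz's inequality correctly for $N\ge3$, and checking that $\abs{x-y}^{2-N}$ is a legitimate symmetric decreasing kernel even though it is not integrable at infinity. To handle this, truncate the kernel to $\min(\abs{z}^{2-N},M)\chi_{\abs{z}<L}$ and pass to the limit by monotone convergence.
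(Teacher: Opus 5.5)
Your proposal is correct and follows the paper's proof step for step: Theorem~\ref{thm:main} for the first inequality, and Riesz--Sobolev for the second when $N\ge3$ (trivial when $N=2$). The explicit value comes from the equality case of Theorem~\ref{thm:main} at $\Omega^*$ together with $\nabla u_{\Omega^*}=x/N$, and the equality statement from the rigidity Theorem~\ref{thm:rigidity} applied via the first inequality. Your kernel truncation is harmless but unnecessary, since Theorem~\ref{thm:RS} only requires the functions to vanish at infinity, which $\abs{z}^{2-N}$ does.
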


Martin's higher-dimensional Ahlfors--Beurling inequality \cite{Ma} is a pointwise estimate for the Teodorescu transform $T_\Omega1$, which satisfies $\abs{T_\Omega1}=\abs{\nabla u_\Omega}$. For $N\geq3$ its extremisers are not balls, in line with the classical fact that a body of given volume exerting maximal gravitational attraction at a point need not be a ball \cite{Kl}. By contrast, Corollary~\ref{cor:ball} yields the sharp mean-square bound
\eq{
    \frac{1}{\abs{\Omega}}\int_\Omega\abs{T_\Omega1}^2\,\rd x
    \leq\frac{1}{N(N+2)}\left(\frac{\abs{\Omega}}{s_N}\right)^{2/N}.
}
Integrating Martin's pointwise bound gives the same conclusion only with a larger constant, by the factor $c_N^2(N+2)/N$ (about $1.75$ when $N=3$); see Proposition~\ref{prop:compare}.

We also derive consequences for torsional rigidity and Bergman analytic content. For an open set $\Omega$ of finite measure, define
\eq{
    T(\Omega)=\sup_{\varphi\in C_c^\infty(\Omega)}
    \left(2\int_\Omega\varphi\,\rd x-\int_\Omega\abs{\nabla\varphi}^2\,\rd x\right).
}
When the torsion function $w_\Omega\in W^{1,2}_0(\Omega)$ exists (the weak solution of $-\Delta w_\Omega=1$), one has $T(\Omega)=\int_\Omega w_\Omega\,\rd x=\int_\Omega\abs{\nabla w_\Omega}^2\,\rd x$.

\begin{corollary}\label{cor:SV}
Let $\Omega\subset\R^N$ be open with $0<\abs{\Omega}<\infty$. Then
\eq{
    T(\Omega)\leq\Ein(\Omega)\leq\frac{I(\Omega)}{2N\omega_{N-1}}
    \leq\frac{I(\Omega^*)}{2N\omega_{N-1}}=T(\Omega^*).
}
If $T(\Omega)=T(\Omega^*)$, then $\Omega$ is a ball up to a null set. For $N=2$ the proof uses no rearrangement. For $N\geq3$ it also yields the scale-free bound $T(\Omega)\leq (N-2)\E(\Omega)/(2N)$.
\end{corollary}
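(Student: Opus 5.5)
The inequality chain $\Ein(\Omega)\le I(\Omega)/(2N\omega_{N-1})\le I(\Omega^*)/(2N\omega_{N-1})=T(\Omega^*)$ is already contained in Theorem~\ref{thm:main} and Corollary~\ref{cor:ball}, once we identify $T(\Omega^*)$. So the new work is the left inequality $T(\Omega)\le\Ein(\Omega)$, the evaluation of $T$ on the ball, and the rigidity statement.

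\emph{Step 1: $T(\Omega)\le\Ein(\Omega)$.} Fix $\varphi\in C_c^\infty(\Omega)$. Since $\Delta u_\Omega=\chi_\Omega$ distributionally and $\varphi$ is supported in $\Omega$,
\[
\int_\Omega\varphi\,\rd x=\int\varphi\,\Delta u_\Omega\,\rd x=-\int_\Omega\<\nabla\varphi,\nabla u_\Omega\>\,\rd x .
\]
By Cauchy--Schwarz and $2ab\le a^2+b^2$,
\[
2\int_\Omega\varphi-\int_\Omega\abs{\nabla\varphi}^2\le 2\Big(\int_\Omega\abs{\nabla\varphi}^2\Big)^{1/2}\Ein(\Omega)^{1/2}-\int_\Omega\abs{\nabla\varphi}^2\le\Ein(\Omega).
\]
Taking the supremum over $\varphi$ gives the claim. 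We only need $\nabla u_\Omega\in L^2_{\rm loc}$ and the distributional identity, which Lemma~\ref{lem:pot} provides. Note that $\Ein(\Omega)<\infty$ by Theorem~\ref{thm:main}.

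\emph{Step 2: the ball.} For $B=\Omega^*$ of radius $R$, the torsion function is $w=(R^2-\abs{x}^2)/(2N)$. Its gradient $\nabla w=-x/N$ coincides up to sign with $\nabla u_B=x/N$, so
\[
T(B)=\int_B\abs{x}^2/N^2=\Ein(B).
\]
Corollary~\ref{cor:ball} gives $\Ein(B)=I(B)/(2N\omega_{N-1})$. Alternatively, Step 1's equality case can be checked directly: $\nabla\varphi\to-\nabla u_B$ in $L^2(B)$ along $\varphi\to w$. Either way the last equality holds.

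\emph{Step 3: rigidity and the two dimension-specific remarks.} If $T(\Omega)=T(\Omega^*)$, then every inequality in the chain is an equality. In particular $\Ein(\Omega)=I(\Omega)/(2N\omega_{N-1})$, which is finite and positive, so the equality case of Theorem~\ref{thm:main} forces $\Omega$ to be a ball up to a null set. This avoids any appeal to equality in Riesz--Sobolev. For $N=2$, $I(\Omega)=\abs{\Omega}^2=I(\Omega^*)$, so the third inequality is an identity and no rearrangement is used. For $N\ge3$, combining Step 1 with Theorem~\ref{thm:main}(ii) gives $T(\Omega)\le (N-2)\E(\Omega)/(2N)$.

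The only delicate point is Step 1's integration by parts for merely measurable data. For $N=2$ the normalized logarithmic potential is not globally in $W^{1,2}$, but the test function has compact support in $\Omega$, so local integrability of $\nabla u_\Omega$ suffices. I expect this to be routine given Lemma~\ref{lem:pot}.
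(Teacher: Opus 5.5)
Your proposal is correct and follows the paper's proof essentially step for step. It uses the same integration by parts against $\nabla u_\Omega$ for $T(\Omega)\le\Ein(\Omega)$; you bound via Cauchy--Schwarz and $2ab\le a^2+b^2$, where the paper completes the square as $\int_\Omega\abs{\nabla u_\Omega}^2-\int_\Omega\abs{\nabla u_\Omega+\nabla\varphi}^2$, which also exhibits $-\nabla w_\Omega$ as an orthogonal projection. It then uses the same ball computation $T(B)=\Ein(B)$ and the same rigidity step via equality in Theorem~\ref{thm:main} and Theorem~\ref{thm:rigidity}.
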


The Saint-Venant inequality is classical; standard proofs use symmetrisation \cite{Polya48,PS}. In our approach, the only rearrangement input for $N\geq3$ is for the Newtonian interaction energy. The first inequality in Corollary~\ref{cor:SV} follows by orthogonal projection of $-\nabla u_\Omega$ onto the closed subspace $\{\nabla\varphi:\varphi\in W^{1,2}_0(\Omega)\}$.

For a bounded open set $\Omega\subset\C$, let $A^2(\Omega)$ be its Bergman space
and define
\eq{
    \lambda_{A^2}(\Omega)=\inf_{f\in A^2(\Omega)}
    \norm{\bar z-f}_{L^2(\Omega)}.
}

\begin{corollary}\label{cor:bergman}
Let $\Omega\subset\C$ be a nonempty bounded open set. Then
\eq{
    \lambda_{A^2}(\Omega)^2=4T(\Omega)
    \leq\int_\Omega\abs{\Cc_\Omega}^2\,\rdA
    \leq\frac{\abs{\Omega}^2}{2\pi}.
}
If $\lambda_{A^2}(\Omega)^2=\abs{\Omega}^2/(2\pi)$, then $\Omega$ is a disc up to a set of measure zero.
\end{corollary}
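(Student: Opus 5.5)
We prove Corollary~\ref{cor:bergman} in three steps: first the identity $\lambda_{A^2}(\Omega)^2=4T(\Omega)$, then the two inequalities from the planar case of the previous results, and finally rigidity.

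\emph{The identity.} Write $z=x_1+ix_2$ and $\partial_{\bar z}=\tfrac12(\partial_1+i\partial_2)$. Since $\Omega$ is bounded, the torsion function $w=w_\Omega\in W^{1,2}_0(\Omega)$ exists. Note that $g=\bar z-f$ with $f\in A^2(\Omega)$ satisfies $\partial_{\bar z}g=1$ in $\mathcal D'(\Omega)$. Conversely, every $g\in L^2(\Omega)$ with $\partial_{\bar z}g=1$ has this form, by Weyl's lemma for $\partial_{\bar z}$.

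\emph{Lower bound.} For any such $g$ and any real $\varphi\in C_c^\infty(\Omega)$, integration by parts gives
$\int_\Omega g\,\partial_z\varphi\,\rdA=-\int_\Omega\varphi\,\rdA$.
Take the closure in $W^{1,2}_0$ and set $\varphi=w$. Since $\abs{\partial_z w}^2=\abs{\nabla w}^2/4$, Cauchy--Schwarz yields
$T(\Omega)=\int_\Omega w\le \norm{g}_{L^2}\,\norm{\nabla w}_{L^2}/2$.
Using $\norm{\nabla w}_{L^2}^2=T(\Omega)$, this gives $\norm{g}^2\ge 4T(\Omega)$.

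\emph{Upper bound.} Choose $g=-2\partial_z w$. Then $\partial_{\bar z}g=-\tfrac12\Delta w=1$, so $g$ is admissible, and $\norm{g}^2=\norm{\nabla w}^2=4T$. Hence the infimum equals $4T(\Omega)$ and is attained. This is the standard Fleeman--Khavinson computation; the one point to check is the Weyl-lemma description of the admissible class.

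\emph{The inequalities.} Corollary~\ref{cor:SV} with $N=2$ gives $T(\Omega)\le\Ein(\Omega)$. Since $\abs{\Cc_\Omega}=2\abs{\nabla u_\Omega}$, we get $4T(\Omega)\le 4\Ein(\Omega)=\int_\Omega\abs{\Cc_\Omega}^2\,\rdA$. Theorem~\ref{thm:main}(i) then gives $\int_\Omega\abs{\Cc_\Omega}^2\,\rdA\le\abs{\Omega}^2/(2\pi)$.

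The middle inequality can also be seen directly, which is the projection argument mentioned after Corollary~\ref{cor:SV}. The function $\Cc_\Omega=2\partial_z u_\Omega$ restricted to $\Omega$ satisfies $\partial_{\bar z}\Cc_\Omega=\tfrac12\Delta u_\Omega=\chi_\Omega$ up to the sign convention of $\Cc$. Hence $\pm\overline{\Cc_\Omega}$ is admissible, or equivalently $\Cc_\Omega$ after conjugation, since $\abs{\,\cdot\,}$ is unchanged. So $\lambda_{A^2}^2\le\int_\Omega\abs{\Cc_\Omega}^2$. I would present whichever route is cleaner, probably this direct one, checking the constant via the disc, where both sides equal $\abs{\Omega}^2/(2\pi)$.

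\emph{Rigidity.} Suppose $\lambda_{A^2}(\Omega)^2=\abs{\Omega}^2/(2\pi)$. Then all the inequalities in the chain are equalities; in particular $T(\Omega)=\abs{\Omega}^2/(8\pi)=T(\Omega^*)$, because for $N=2$ we have $I(\Omega)=I(\Omega^*)=\abs{\Omega}^2$. The rigidity statement of Corollary~\ref{cor:SV}, or directly the equality case of Theorem~\ref{thm:main}, then forces $\Omega$ to be a disc up to a null set. The only delicate point overall is the identification of $\lambda_{A^2}$ with $4T$ for arbitrary bounded open sets, which need not be regular. Working throughout in $W^{1,2}_0$ and in distributions, rather than with boundary values, avoids any regularity assumption.
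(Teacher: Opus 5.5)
Your overall route is the paper's. You characterize the admissible class $\{g\in L^2(\Omega):\partial_{\bar z}g=1\}$ by Weyl's lemma, build the extremal $g$ from $\partial_z w_\Omega$, and test against $w_\Omega$ using density in $W^{1,2}_0(\Omega)$. You then conclude with Corollary~\ref{cor:SV}, Theorem~\ref{thm:main}(i) and Theorem~\ref{thm:rigidity}. Your Cauchy--Schwarz lower bound is just the dual form of the paper's orthogonality $g\perp A^2(\Omega)$; the projection version also gives the Pythagorean identity and uniqueness of the minimizer, but neither is needed. However, the normalizations in the identity step are wrong, and as written the upper-bound step is false.

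\textbf{Upper bound.} Since $\partial_{\bar z}\partial_z=\tfrac14\Delta$ and $-\Delta w=1$, your choice $g=-2\partial_z w$ gives $\partial_{\bar z}g=-\tfrac12\Delta w=\tfrac12$, not $1$. Its norm is $\norm{g}_{L^2}^2=4\cdot\tfrac14\norm{\nabla w}_{L^2}^2=T(\Omega)$, not $4T(\Omega)$. So this $g$ is not admissible; if it were, it would contradict your own lower bound, since $T(\Omega)>0$. The two slips cancel only for $w=2w_\Omega$, which conflicts with the normalization $\int_\Omega w=\norm{\nabla w}^2=T(\Omega)$ used in your lower bound. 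The correct choice, as in the paper, is $g=-4\partial_z w$. Then $\partial_{\bar z}g=-\Delta w=1$ and $\norm{g}^2=16\cdot\tfrac14\norm{\nabla w}^2=4T(\Omega)$.

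\textbf{Lower bound.} The integration by parts must read $\int_\Omega g\,\partial_{\bar z}\varphi\,\rdA=-\int_\Omega\varphi\,\rdA$. With $\partial_z\varphi$ it already fails for the admissible $g=\bar z$, where the left side is $0$. Because $\varphi$ is real, $\abs{\partial_{\bar z}\varphi}=\tfrac12\abs{\nabla\varphi}$, so your Cauchy--Schwarz estimate goes through unchanged.

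\textbf{Direct route.} The correct formula is $\Cc_\Omega=4\partial_z u_\Omega$, not $2\partial_z u_\Omega$. This agrees with $\abs{\Cc_\Omega}=2\abs{\nabla u_\Omega}$, which you use elsewhere, and it gives $\partial_{\bar z}\Cc_\Omega=\Delta u_\Omega=\chi_\Omega$ exactly. Hence $\Cc_\Omega$ itself is admissible, with no sign ambiguity. By contrast, $\pm\overline{\Cc_\Omega}$ is not admissible in general: for the unit disc, $\Cc_\Omega=\bar z$ on $\Omega$ while $\overline{\Cc_\Omega}=z$.

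With these corrections your argument is complete and coincides with the paper's proof.
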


For bounded simply connected domains, the identity $\lambda_{A^2}(\Omega)^2=4T(\Omega)$ was proved by Fleeman--Lundberg \cite{FL}; see also \cite{FK}. For smooth domains, the corresponding identity in every dimension follows from \cite[Theorem~16]{GGS20}. Here $T(\Omega)$ is defined using zero Dirichlet data on the whole boundary. For multiply connected domains, this differs from the torsional rigidity with constant values and prescribed fluxes on inner boundary components considered in \cite{FL,GGS20}. We give a direct projection proof for arbitrary bounded open sets in Section~\ref{sec:cons}.

The higher-dimensional analogue is as follows. For bounded open $\Omega\subset\R^N$, let
\eq{
    \mathcal{A}^2(\Omega)=\{F\in L^2(\Omega;\R^N):
    \rdiv F=0,\ \p_iF_j=\p_jF_i\},\\
    \lambda_{\mathcal{A}^2}(\Omega)=
    \inf_{F\in\mathcal{A}^2(\Omega)}\norm{x-F}_{L^2(\Omega)},
}
where the derivative conditions are distributional. Thus $\mathcal{A}^2$ consists of divergence-free and curl-free vector fields.

\begin{corollary}\label{cor:bergmanND}
Let $N\geq2$ and let $\Omega\subset\R^N$ be a nonempty bounded open set. Then
\eq{
    \lambda_{\mathcal{A}^2}(\Omega)^2=N^2T(\Omega)
    \leq\frac{N}{N+2}\,s_N^{-2/N}\abs{\Omega}^{1+2/N}.
}
Equality in the bound holds only if $\Omega$ is a ball up to a set of measure zero.
For smooth bounded domains, equality holds if and only if $\Omega$ is a ball.
\end{corollary}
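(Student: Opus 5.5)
The statement to prove is Corollary~\ref{cor:bergmanND}. The plan has three parts: compute $\lambda_{\mathcal{A}^2}(\Omega)$ exactly by an orthogonal projection in $L^2(\Omega;\R^N)$, then apply Corollary~\ref{cor:SV} together with the explicit value of $T(\Omega^*)$ from Corollary~\ref{cor:ball}.

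\textbf{Step 1: the identity.} Let $w=w_\Omega\in W^{1,2}_0(\Omega)$ be the torsion function, i.e.\ $-\Delta w=1$ weakly; it exists for every bounded open set by Lax--Milgram. Set $h=\tfrac12|x|^2+Nw$.
\begin{itemize}
\item[(a)] We have $\Delta h=N-N=0$ in $\Omega$, so $h$ is harmonic. Hence $F_0\coloneqq\nabla h=x+N\nabla w$ lies in $L^2(\Omega;\R^N)$, is divergence-free and is curl-free. So $F_0\in\mathcal{A}^2(\Omega)$, and $x-F_0=-N\nabla w$.
\item[(b)] We check that $\nabla w\perp\mathcal{A}^2(\Omega)$ in $L^2(\Omega;\R^N)$. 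For $F\in\mathcal{A}^2$, choose $\varphi_k\in C_c^\infty(\Omega)$ with $\varphi_k\to w$ in $W^{1,2}$. Then $\int\nabla\varphi_k\cdot F=-\langle \rdiv F,\varphi_k\rangle=0$. Passing to the limit gives $\int_\Omega\nabla w\cdot F=0$.
\item[(c)] Consequently $F_0$ is the orthogonal projection of $x$ onto $\mathcal{A}^2$. This subspace is closed, since the distributional conditions pass to $L^2$ limits. Therefore
\[
\lambda_{\mathcal{A}^2}(\Omega)^2=N^2\int_\Omega|\nabla w|^2\,\rd x=N^2T(\Omega),
\]
using the identity $T(\Omega)=\int|\nabla w_\Omega|^2$ from the introduction.
\end{itemize}
No regularity of $\partial\Omega$ and no curl condition on $w$ is needed, because $\nabla w$ is automatically curl-free.

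\textbf{Step 2: the bound.} By Corollary~\ref{cor:SV} and Corollary~\ref{cor:ball},
\[
T(\Omega)\le T(\Omega^*)=\Ein(\Omega^*)=\frac{|\Omega|}{N(N+2)}\Bigl(\frac{|\Omega|}{s_N}\Bigr)^{2/N}.
\]
This can be sanity-checked directly: $w_{B_R}=(R^2-|x|^2)/(2N)$, and $\int_{B_R}|x|^2/N^2=s_NR^{N+2}/(N(N+2))$. Multiplying by $N^2$ gives
\[
\lambda_{\mathcal{A}^2}(\Omega)^2\le \frac{N}{N+2}\,s_N^{-2/N}|\Omega|^{1+2/N}.
\]

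\textbf{Step 3: equality.} If equality holds in the bound, then $T(\Omega)=T(\Omega^*)$. Corollary~\ref{cor:SV} then gives that $\Omega$ is a ball up to a null set.

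For smooth bounded domains we upgrade this to an honest ball.
\begin{itemize}
\item[(a)] Suppose $|\Omega\,\triangle\, B|=0$ for a ball $B$. An open set with smooth boundary that is a.e.\ equal to $B$ must equal $B$. Indeed, $\Omega\subset\overline B$ up to points of zero density, and open points of $\Omega$ outside $\overline B$ would carry positive measure, so $\Omega\subset\overline B$ and hence $\Omega\subset B$ after removing boundary points. Also $B\setminus\Omega$ is relatively closed in $B$ and null. A smooth boundary cannot contain a null closed subset of $B$ separating parts of $B$, since $\partial\Omega$ is a smooth hypersurface with $\Omega$ locally on one side. Hence $\Omega=B$.
\item[(b)] Conversely, for a ball, Step 2 is an equality by the explicit computation there.
\end{itemize}

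I expect the main obstacle to be the orthogonality in Step 1(b) for arbitrary bounded open sets. It is handled by density of $C_c^\infty(\Omega)$ in $W^{1,2}_0(\Omega)$ and by the distributional definition of $\rdiv F=0$. The smooth-domain equality case in Step 3(a) is the other step that needs care.
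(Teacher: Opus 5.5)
Your proposal is correct and follows the paper's proof essentially step for step. Both use the projection $F_0=x+N\nabla w_\Omega$ with orthogonality of $\nabla w_\Omega$ to $\mathcal{A}^2(\Omega)$ by density, then the fixed-volume bound, rigidity, and the a.e.-ball-to-ball upgrade for regular boundaries. The only cosmetic difference is that the paper bounds $\lambda_{\mathcal{A}^2}(\Omega)^2$ via the competitor $x-N\nabla u_\Omega\in\mathcal{A}^2(\Omega)$ and Corollary~\ref{cor:ball}, whereas you cite Corollary~\ref{cor:SV}, whose first inequality is that same projection argument.
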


For $N=2$, this recovers Corollary~\ref{cor:bergman} under the standard identification with holomorphic functions. Gardiner--Ghergu--Sj\"odin \cite{GGS20} proved the sharp bound for smoothly bounded domains and, more generally, for Bergman $p$-analytic content for $1\leq p\leq2$.

We also obtain applications to Brownian motion and to aggregation equations. Let $B_t$ be Brownian motion with generator $\tfrac{1}{2}\Delta$, let $\tau_\Omega$ be the first exit time from $\Omega$, and let $L_\Omega=\int_0^\infty\chi_\Omega(B_t)\,\rd t$ be the total occupation time.

\begin{theorem}\label{thm:bm}
Let $N\geq3$ and let $\Omega\subset\R^N$ be open with $0<\abs{\Omega}<\infty$. If $B_0$ is uniformly distributed in $\Omega$, then
\eq{
    \mathbb{E}[\tau_\Omega]\leq\frac{N-2}{2N}\mathbb{E}[L_\Omega].
}
Equality holds if and only if $\Omega$ is a ball up to a set of capacity zero.
\end{theorem}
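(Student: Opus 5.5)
The plan is to identify both expectations with integrals of Newtonian potentials and then apply Theorem~\ref{thm:main}(ii). With generator $\tfrac12\Delta$, the occupation time has Green kernel $G(x,y)=\frac{2}{(N-2)\omega_{N-1}}\abs{x-y}^{2-N}$, so $\mathbb{E}_x[L_\Omega]=\int_\Omega G(x,y)\,\rd y=-2u_\Omega(x)$. Averaging over a uniformly distributed $B_0$ and using the energy formula from the introduction, we get
\[
\mathbb{E}[L_\Omega]=\frac{1}{\abs{\Omega}}\cdot\frac{2I(\Omega)}{(N-2)\omega_{N-1}}=\frac{2\E(\Omega)}{\abs{\Omega}} .
\]
On the other side, $\mathbb{E}_x[\tau_\Omega]=v(x)$, where $v$ is the killed Green potential. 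It solves $-\tfrac12\Delta v=1$ in $\Omega$ and vanishes on the regular boundary, so $v=2w_\Omega$ with $w_\Omega$ the torsion function. Hence $\mathbb{E}[\tau_\Omega]=\frac{2}{\abs{\Omega}}\int_\Omega w_\Omega=\frac{2T(\Omega)}{\abs{\Omega}}$.

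Once these identifications are in place, the inequality is exactly Corollary~\ref{cor:SV} in its scale-free form $T(\Omega)\le\frac{N-2}{2N}\E(\Omega)$. Both sides carry the same factor $2/\abs{\Omega}$, so it cancels.

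For a general open set of finite measure the identification $\int_\Omega\mathbb{E}_x[\tau_\Omega]\,\rd x=2T(\Omega)$ needs care. I would exhaust $\Omega$ by bounded smooth open sets $\Omega_k\uparrow\Omega$. Then $\tau_{\Omega_k}\uparrow\tau_\Omega$ almost surely, and monotone convergence gives $\int_\Omega\mathbb{E}_x[\tau_\Omega]\,\rd x=\lim_k 2T(\Omega_k)$. Since $C_c^\infty(\Omega)=\bigcup_k C_c^\infty(\Omega_k)$, the supremum defining $T$ gives $T(\Omega_k)\uparrow T(\Omega)$. The limit may a priori be infinite, but the bound $T(\Omega_k)\le T(\Omega)\le\Ein(\Omega)<\infty$ rules this out. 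The key fact here is that the Green function of $\Omega$ is the increasing limit of those of $\Omega_k$, and that the expected occupation time of $\Omega$ before $\tau_\Omega$ equals the Green potential.

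The main obstacle is the equality case. Up to a null set, equality gives $T(\Omega)=\Ein(\Omega)=I(\Omega)/(2N\omega_{N-1})$. By Theorem~\ref{thm:main}, $\Omega$ agrees a.e.\ with a ball $B$. Moreover, equality in the projection inequality $T(\Omega)\le\Ein(\Omega)$ forces $-u_\Omega-c$ (with $u_\Omega=u_B$ quadratic) to lie in $W^{1,2}_0(\Omega)$ for a suitable constant $c$. So $w_\Omega=w_B$ a.e., and $w_B>0$ on all of $B$.

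To upgrade from a null set to a set of capacity zero, I would argue as follows. The open set $\Omega$ has $\Omega\cap B$ dense in $B$ in measure. The function $w_B$, the a.e.\ limit of elements of $W^{1,2}_0(\Omega)$, is positive on $B\setminus\Omega$, so $B\setminus\Omega$ must be polar: functions in $W^{1,2}_0(\Omega)$ vanish quasi-everywhere off $\Omega$. Also $\Omega\setminus\overline B$ is open and null, hence empty. Points of $\partial B\cap\Omega$ are excluded similarly, since an open set meeting $\partial B$ has positive measure outside $B$.

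Conversely, removing a polar set from a ball changes neither the exit time nor the occupation time almost surely, because Brownian motion does not hit polar sets for $N\ge3$. This gives equality in that case. The capacity bookkeeping in this step, rather than the inequality itself, is where I expect the real work to lie.
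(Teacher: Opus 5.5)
Your proposal is correct and follows the paper's proof. The same identifications $\abs{\Omega}\,\mathbb{E}[\tau_\Omega]=2T(\Omega)$ and $\abs{\Omega}\,\mathbb{E}[L_\Omega]=2\E(\Omega)$ reduce the inequality to Corollary~\ref{cor:SV}. The equality case then runs, as in the paper, through Theorem~\ref{thm:rigidity}, $\Omega\subset B$, the zero extension of $w_\Omega$ equalling $w_B$, and $w_B>0$ forcing $B\backslash\Omega$ to have capacity zero.

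Your exhaustion argument for $\int_\Omega\mathbb{E}_x[\tau_\Omega]\,\rd x=2T(\Omega)$ usefully fills in what the paper simply calls the probabilistic representation. Getting $w_\Omega=w_B$ from equality in the projection step, rather than from $T(\Omega)=T(B)$ and uniqueness of the torsion maximizer in $W^{1,2}_0(B)$, is an equivalent variant, with two points of wording to fix. First, the single constant $c$ comes from the zero extension lying in $W^{1,2}_0(B)$ with gradient $-x/N$, not from local constancy on possibly several components. Second, the capacity step should read that $w_B$ is continuous and a.e.\ equal to the zero extension of an element of $W^{1,2}_0(\Omega)$; being merely an a.e.\ limit of such elements would not suffice.
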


Both expectations are finite. In $\R^3$, the expected occupation time after the first exit is at least five sixths of the total expected occupation time. This ratio statement does not follow from the corresponding fixed-volume bounds for $\mathbb{E}[\tau_\Omega]$ and $\mathbb{E}[L_\Omega]$ separately.

Consider next the aggregation equation
\eq{\label{eq:agg}
    \p_t\rho+\rdiv (\rho v)=0,
    \qquad v=\mp\nabla U_\rho,\qquad\Delta U_\rho=\rho,
}
with the upper sign for aggregation and the lower sign for spreading. For $\rho_0>0$, uniform patches $\rho_0\chi_{\Omega_0}$ evolve as $\rho(t)\chi_{\Omega_t}$ \cite{BLLe,BGLV}. For $N\geq3$, the ratio $R(t)=\E(\Omega_t)/\E(\Omega_t^*)\in(0,1]$ satisfies
\eq{
    \frac{\rd}{\rd t}\log R(t)
    =\mp2\rho(t)\frac{\mathcal{D}(\Omega_t)}{\E(\Omega_t)}.
}
In particular, $R$ is nonincreasing along aggregation and nondecreasing along spreading, with strict monotonicity unless the patch is a ball. The planar analogue involves logarithmic energy; see Theorem~\ref{thm:agg}. For spreading patches, convergence in $L^1$ to the self-similar ball was proved in \cite{BLLe}.

Finally, the potential estimate extends from characteristic functions to bounded nonnegative densities (Theorem~\ref{thm:densities}). Let $\Gamma$ be the fundamental solution of $\Delta$ and set $U_f=\Gamma*f$, with the planar normalization understood as for $u_\Omega$.

\begin{theorem}\label{thm:tri}
Let $N\geq2$ and $0\leq f\in L^1\cap L^\infty(\R^N)$. Then
\eq{
    \int_{\R^N}f\abs{\nabla U_f}^2\,\rd x
    \leq\frac{\norm{f}_{L^\infty}}{2N\omega_{N-1}}
    \iint_{\R^N\times\R^N}
    \frac{f(x)f(y)}{\abs{x-y}^{N-2}}\,\rd x\,\rd y,
}
with equality if and only if $f=c\chi_B$ a.e.\ for a ball $B$ and $c\geq0$. For $N\geq3$ the right-hand side equals $\frac{N-2}{2N}\norm{f}_{L^\infty}\norm{f}_{\dot H^{-1}}^2$. For $N=2$ it equals $\norm{f}_{L^\infty}\norm{f}_{L^1}^2/(8\pi)$; equivalently,
\eq{
    \int_\C f\abs{\Cc f}^2\,\rdA
    \leq\frac{1}{2\pi}\norm{f}_{L^\infty}\norm{f}_{L^1}^2,
    \qquad
    \Cc f(\zeta)=\frac{1}{\pi}\int_\C\frac{f(z)\,\rdA(z)}{\zeta-z}.
}
\end{theorem}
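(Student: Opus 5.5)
The plan is to rerun the ray/line argument behind Theorem~\ref{thm:deficit} with $\chi_\Omega$ replaced by $f$. Then the only new ingredient is a one-dimensional inequality in which $\norm{f}_{L^\infty}$ enters. Put $M=\norm{f}_{L^\infty}$ and assume $M>0$. For $x\in\R^N$ and $\theta\in\Sph$ set
\eq{
\Lambda(x,\theta)=\int_0^\infty f(x+r\theta)\,\rd r,\qquad \Delta(x,\theta)=\Lambda(x,\theta)-\Lambda(x,-\theta).
}
In polar coordinates about $x$, the Jacobian $r^{N-1}$ cancels the kernel $\abs{x-y}^{1-N}$ in \eqref{eq:grad} for $U_f$. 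This is the same computation as in Lemma~\ref{lem:rays}, and gives
\eq{
\nabla U_f(x)=-\frac{1}{\omega_{N-1}}\int_{\Sph}\theta\,\Lambda(x,\theta)\,\rd\theta=-\frac{1}{2\omega_{N-1}}\int_{\Sph}\theta\,\Delta(x,\theta)\,\rd\theta.
}
Since $\norm{\theta_i}^2=\omega_{N-1}/N$, we have $\abs{\int\theta\Delta}^2=\frac{\omega_{N-1}}{N}\norm{P_1\Delta}^2$. Bessel's inequality then gives
\eq{
f(x)\abs{\nabla U_f(x)}^2\le \frac{f(x)}{4N\omega_{N-1}}\norm{\Delta(x,\cdot)}^2,
}
with equality iff $\Delta(x,\cdot)$ is linear in $\theta$ or $f(x)=0$. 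The finiteness and $L^2$ issues for $\Delta$ are handled exactly as in Lemma~\ref{lem:rays}, using $f\le M\chi_{\{f>0\}}$.

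Next, I integrate in $x$ and decompose $\R^N$ into lines parallel to $\theta$, that is, $x=p+t\theta$ with $p\in\theta^\perp$. Write $g(t)=f(p+t\theta)$, $m=\int_\R g$, and $G(t)=\int_{-\infty}^t g$. Along this line $\Delta=m-2G$, so the one-dimensional change of variables $\rd G=g\,\rd t$ gives
\eq{
\int_\R g(t)\bigl(m-2G(t)\bigr)^2\,\rd t=\int_0^m (m-2G)^2\,\rd G=\frac{m^3}{3}.
}
This is the density analogue of the $\sigma^3$ term in \eqref{eq:id1}. Hence
\eq{
\int f\abs{\nabla U_f}^2\le \frac{1}{12N\omega_{N-1}}\int_{\Sph}\int_{\theta^\perp} m(\theta,p)^3\,\rd p\,\rd\theta.
}

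The key one-dimensional claim replaces $\G(E)\ge0$. It says that for $0\le g\le M$ with $\int g=m$,
\eq{
\iint g(s)g(t)\abs{s-t}\,\rd s\,\rd t\ \ge\ \frac{m^3}{3M},
}
with equality iff $g=M\chi_J$ for an interval $J$ of length $m/M$. I would prove it by writing $\iint gg\abs{s-t}=\int_\R 2G(1-\cdot)$-type expressions, or more directly by the layer-cake/bathtub argument. First, by one-dimensional Riesz rearrangement (the kernel $-\abs{s-t}$ is symmetric increasing up to constants), the left side does not increase under symmetric decreasing rearrangement; I would handle the sign via a truncated kernel $R-\abs{s-t}$. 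Second, among symmetric decreasing $g$ with $0\le g\le M$ and fixed mass, the functional is minimized by concentrating mass at the centre, i.e.\ by $M\chi_{[-m/2M,m/2M]}$. This follows by comparing $\int_{\abs{t}>r}g$ with that of the extremal. A cleaner route I would try first is the identity
\eq{
\iint gg\abs{s-t}=2\int_\R G(t)\bigl(m-G(t)\bigr)\,\rd t
}
together with the observation that $G$ is $M$-Lipschitz. The integrand is positive on a set of length at least $m/M$ and, for a given ramp, minimized when $G$ rises at full slope $M$. That yields $m^3/(3M)$ directly, and equality forces $G'=M$ on an interval.

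Finally, the line Jacobian of Lemma~\ref{lem:BP} converts $\int_{\Sph}\int_{\theta^\perp}\iint gg\abs{s-t}$ into $2\iint f(x)f(y)\abs{x-y}^{2-N}$. This is the same conversion as in \eqref{eq:id2}, where $I(\Omega)/(2N\omega_{N-1})$ corresponds to $\frac{1}{4N\omega_{N-1}}\int\int(\cdot)$. Combining everything gives the stated bound with constant $M/(2N\omega_{N-1})$. The $\dot H^{-1}$ and planar reformulations are then the identities already noted before Theorem~\ref{thm:main}.

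For equality, a.e.\ line section must be of the form $M\chi_J$, so $f=M\chi_\Omega$ a.e.\ with $\Omega$ having interval sections, i.e.\ $\Omega$ convex up to a null set. Then Theorem~\ref{thm:main} forces $\Omega$ to be a ball. The main obstacle is the one-dimensional inequality with its equality case, and the measure-theoretic justification that a.e.\ line sections determine $f=M\chi_\Omega$.
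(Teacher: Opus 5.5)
Your proposal is correct and is essentially the paper's proof, which proves Theorem~\ref{thm:densities} for $0\le f\le1$ and then scales by $\norm{f}_{L^\infty}$. Your identity $\iint g(s)g(t)\abs{s-t}\,\rd s\,\rd t=2\int_\R G(m-G)\,\rd t$, combined with $g\le M$ in the form $\int G(m-G)\ge\frac1M\int G(m-G)\,g=\frac{m^3}{6M}$, is exactly the paper's one-dimensional deficit $2\int_\R(1-h)F(\sigma-F)\,\rd r\ge0$. Your equality case also matches the paper's: $f\in\{0,M\}$ a.e., then Theorem~\ref{thm:rigidity}; the Riesz-rearrangement detour is unnecessary.

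One small correction: $\{f>0\}$ need not have finite measure (e.g.\ a Gaussian), so $f\le M\chi_{\{f>0\}}$ does not give the finiteness you need. Justify finiteness of the ray and interaction integrals by the bathtub bound for $0\le f\le M$ with $\int f<\infty$, as the paper does.
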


For $N\geq3$, H\"older's inequality gives the weaker constant $1$ in place of $(N-2)/(2N)$. For signed densities, with $\abs{f}$ on the left-hand side, that constant is attained (Section~\ref{sec:densities}). The trilinear integral is the dissipation of the interaction energy along \eqref{eq:agg}, and it yields the sharp bounds in Corollary~\ref{cor:energybounds}.

To our knowledge, Theorem~\ref{thm:main}, the higher-dimensional statement in Corollary~\ref{cor:ball}, and the scale-invariant estimate $T(\Omega)\leq I(\Omega)/(2N\omega_{N-1})$ are new. Related geometric inequalities proved without symmetrisation include Cabr\'e's isoperimetric argument \cite{Ca} and Borell's Brunn--Minkowski inequality for torsional rigidity \cite{Bo}. The constant $(N-2)/(2N)$ also appears in the recent Neumann--Poincar\'e eigenvalue bound of Colbrook and Sadeghi \cite{CS}; we do not know of a direct implication between the two results. Our final rigidity step is related to the potential-theoretic theorem of Aharonov--Schiffer--Zalcman \cite{ASZ}.

Section~\ref{sec:proof} proves the deficit identity, the main inequality, and the equality case. Section~\ref{sec:cons} proves the corollaries and develops further consequences and examples. Section~\ref{sec:martin} compares the pointwise and mean-square bounds. Sections~\ref{sec:densities} and~\ref{sec:apps} treat densities and applications.

\section{Proof of Theorems~\ref{thm:main} and~\ref{thm:deficit}}\label{sec:proof}

\subsection{Preliminaries}\label{sec:prelim}

We write $\rd x$, $\rd p$, and $\rd t$ for Lebesgue measure on $\R^N$, hyperplanes, and lines, respectively, and $\rd\theta$ for surface measure on $\Sph$. When taking line and ray sections, we may replace $\Omega$ by a Borel representative. This leaves the potential, energies, and integrated deficit terms unchanged. We use Tonelli's theorem for nonnegative integrands and Fubini's theorem for absolutely integrable ones.

Let $\Omega^*$ be the open ball centered at $0$ with $\abs{\Omega^*}=\abs{\Omega}$, and let $r_\Omega$ be its radius. For fixed $\theta\in\Sph$, write $x=p+t\theta$ with $p\in\theta^\perp$ and $t\in\R$, so that $\rd x=\rd p\,\rd t$.
The fundamental solution of $\Delta$ is
\eq{
    \Gamma(x)=\frac{1}{2\pi}\log\abs{x} \qquad\text{for}\quad N=2,
}
and
\eq{
    \Gamma(x)=-\abs{x}^{2-N}/((N-2)\omega_{N-1}) \qquad\text{for}\quad N\geq3.
}
In both cases we have
\eq{
    \nabla\Gamma(x)=\frac{x}{\omega_{N-1}\abs{x}^N}.
}

\begin{lemma}\label{lem:pot}
Let $\Omega\subset\R^N$ ($N\geq2$) be measurable with $0<\abs{\Omega}<\infty$.
\begin{enumerate}[(a)]
\item For every $x\in\R^N$,
\eq{
    \int_\Omega\abs{x-y}^{1-N}\,\rd y\leq \omega_{N-1}r_\Omega, \qquad \int_\Omega\abs{x-y}^{2-N}\,\rd y\leq\frac{1}{2} \omega_{N-1}r_\Omega^2.
}
In particular,
\eq{
    I(\Omega)\leq\frac{1}{2} \omega_{N-1}r_\Omega^2\abs{\Omega}<\infty, \qquad \abs{\nabla u_\Omega}\leq r_\Omega.
}
\item $u_\Omega\in C^1(\R^N)$, \eqref{eq:grad} holds, and $\Delta u_\Omega=\chi_\Omega$ in $\mathcal{D}'(\R^N)$.
\item If $N\geq3$, then $\nabla u_\Omega\in L^2(\R^N)$ and
\eq{\label{eq:total-energy}
    \E(\Omega)=\int_{\R^N}\abs{\nabla u_\Omega}^2\,\rd x=\frac{I(\Omega)}{(N-2)\omega_{N-1}}.
}
\item If $\Omega$ is open, then $u_\Omega-\abs{x}^2/(2N)$ is harmonic in $\Omega$; in particular $u_\Omega\in C^\infty(\Omega)$. Moreover, $u_\Omega$ is harmonic in $\R^N\backslash\overline\Omega$.
\end{enumerate}
\end{lemma}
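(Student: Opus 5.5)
The four parts are standard potential-theoretic facts. I would prove them in the order (a), (b), (c), (d), with the bathtub (rearrangement) principle for (a) doing most of the work.

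\emph{Part (a).} Both kernels $\abs{x-y}^{1-N}$ and $\abs{x-y}^{2-N}$ are radially decreasing in $y-x$. Hence, among all sets of measure $\abs{\Omega}$, the integral over $\Omega$ is maximized by the ball of radius $r_\Omega$ centred at $x$. To see this, split $\Omega=(\Omega\cap B_{r_\Omega}(x))\cup(\Omega\setminus B_{r_\Omega}(x))$ and compare with $B_{r_\Omega}(x)\setminus\Omega$, which has the same measure as the second piece. The kernel is at least $r_\Omega^{1-N}$ (resp.\ $r_\Omega^{2-N}$) on the ball and at most that value outside it. Polar coordinates then give
\eq{
    \int_{B_{r_\Omega}}\abs{z}^{1-N}\rd z=\omega_{N-1}r_\Omega,\qquad
    \int_{B_{r_\Omega}}\abs{z}^{2-N}\rd z=\tfrac12\omega_{N-1}r_\Omega^2.
}
Integrating the second bound over $x\in\Omega$ bounds $I(\Omega)$. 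The first bound, combined with \eqref{eq:grad}, gives $\abs{\nabla u_\Omega}\le r_\Omega$.

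\emph{Part (b).} For $N\ge3$, the integrand defining $u_\Omega$ and its formal $x$-derivative are dominated locally uniformly by the integrals in (a). Uniform integrability near the singularity follows from the same rearrangement bound applied to $\Omega\cap B_\varepsilon(x)$, and this justifies differentiation under the integral sign and continuity of the derivative.

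For $N=2$ the extra work is convergence. Write $\log\frac{\abs{x-y}}{1+\abs{y}}$ and note that for $\abs{y}\ge2\abs{x}+1$ it is bounded by $C(1+\abs{x})/(1+\abs{y})$, which is only $O(1/\abs{y})$ and need not be integrable over an unbounded set of finite area. To fix this, I would split $\Omega$ into a part near $x$ and a far part. On the near part, $\log\abs{x-y}$ is integrable and bounded by a rearrangement bound as in (a). On the far part, the integrand is bounded by a constant times $1+\abs{\log\abs{x-y}}+\abs{\log(1+\abs{y})}$, and the point is that $\abs{\log\frac{\abs{x-y}}{1+\abs{y}}}$ stays bounded when $\abs{y}$ is large relative to $\abs{x}$. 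That makes the far part locally uniformly bounded in $x$, hence integrable over $\Omega$ since $\abs{\Omega}<\infty$. Differentiation in $x$ then produces the kernel $(x-y)/\abs{x-y}^2$, which is controlled by (a).

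Once $u_\Omega\in C^1$ and \eqref{eq:grad} hold, the identity $\Delta u_\Omega=\chi_\Omega$ in $\mathcal D'$ follows from Fubini: pair $u_\Omega$ with $\Delta\varphi$ and use $\Delta\Gamma=\delta_0$. In the planar case, the $y$-dependent normalization $-\log(1+\abs{y})$ is constant in $x$ and integrates to zero against $\Delta\varphi$.

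\emph{Part (c).} I would compute $\int\abs{\nabla u_\Omega}^2$ by writing $\abs{\nabla u_\Omega}^2$ as a double integral over $y,z\in\Omega$ and using the classical identity
\eq{
    \int_{\R^N}\nabla\Gamma(x-y)\cdot\nabla\Gamma(x-z)\,\rd x=-\Gamma(y-z)=\frac{\abs{y-z}^{2-N}}{(N-2)\omega_{N-1}}.
}
This identity follows from $\Delta\Gamma=\delta$ by integrating by parts on a large ball minus small balls; the boundary terms vanish since $\abs{\Gamma}\abs{\nabla\Gamma}\sim R^{3-2N}\cdot R^{N-1}\to0$. Tonelli/Fubini is justified because the absolute integrand has finite total integral: the estimate $\int\abs{x-y}^{1-N}\abs{x-z}^{1-N}\rd x=C\abs{y-z}^{2-N}$ reduces its finiteness to (a).

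\emph{Part (d).} This follows from (b). In $\Omega$ we have $\Delta(u_\Omega-\abs{x}^2/(2N))=0$ in $\mathcal D'(\Omega)$, so Weyl's lemma gives harmonicity and smoothness. In $\R^N\setminus\overline\Omega$ we have $\Delta u_\Omega=0$ in the sense of distributions, so the same lemma applies.

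I expect the main obstacle to be the planar convergence and differentiation in (b) for unbounded $\Omega$. Everything else reduces to the rearrangement bound in (a).
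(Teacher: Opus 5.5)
Your proposal is correct. Parts (a), (b) and (d) are in substance the paper's argument: the bathtub principle for (a), Weyl's lemma for (d), and for (b) control of the kernel near the singularity plus boundedness of the kernel and its gradient for $y$ far from a bounded set of $x$. The paper splits off $\Omega\cap B_{2\rho}$ and quotes Gilbarg--Trudinger's Lemma~4.1, while you use uniform integrability via the rearrangement bound; these amount to the same thing. Likewise, checking $\Delta u_\Omega=\chi_\Omega$ by pairing $u_\Omega$ directly with $\Delta\varphi$, rather than first integrating by parts, is an inessential variation.

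The genuine difference is in (c). The paper argues in three steps:
\begin{enumerate}[(1)]
\item it proves $\int\abs{\nabla u_f}^2=-\int f\,u_f$ by Green's formula for bounded compactly supported $f$;
\item it polarizes this identity;
\item it passes to $\Omega$ through the truncations $\Omega_k=\Omega\cap B_k$, showing that $\nabla u_{\Omega_k}$ is Cauchy in $L^2$ and converges pointwise.
\end{enumerate}
You instead expand $\abs{\nabla u_\Omega}^2$ as a double integral over $\Omega\times\Omega$ and justify Fubini on the resulting triple integral by the composition estimate $\int_{\R^N}\abs{x-y}^{1-N}\abs{x-z}^{1-N}\,\rd x=C_N\abs{y-z}^{2-N}$. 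This constant is finite precisely because $N\geq3$ makes $\abs{x}^{2-2N}$ integrable at infinity, and it reduces absolute convergence to $I(\Omega)<\infty$ from (a). You then apply the two-point Green identity $\int\nabla\Gamma(x-y)\cdot\nabla\Gamma(x-z)\,\rd x=-\Gamma(y-z)$ for $y\neq z$.

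Your route gives $\nabla u_\Omega\in L^2$ and the identity in one step, with no approximation. It also applies verbatim to any density with $\iint\abs{f(y)}\abs{f(z)}\abs{y-z}^{2-N}\,\rd y\,\rd z<\infty$. The paper's route avoids the composition estimate and yields a reusable scheme (identity for nice $f$, then truncation), which it invokes again in Section~\ref{sec:densities}.

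One small correction to your planar discussion. A bound of order $(1+\abs{x})/(1+\abs{y})$ is at most $1+\abs{x}$, and a bounded function is always integrable over a set of finite area, so there is no convergence problem there. Your eventual observation, that $\log\frac{\abs{x-y}}{1+\abs{y}}$ and its $x$-gradient are bounded for $y$ far from a bounded set of $x$, is exactly the paper's two-line argument. The intermediate bound by $1+\abs{\log\abs{x-y}}+\abs{\log(1+\abs{y})}$ should be dropped, since $\int_\Omega\log(1+\abs{y})\,\rd y$ may diverge; that divergence is the reason for the normalization in the first place. So the planar case is not really the main obstacle.
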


\begin{proof}
(a) By the bathtub principle \cite[Thm.~1.14]{LL}, each integral is maximized by the ball of volume $\abs{\Omega}$ centered at $x$. Integration over that ball gives
$$\int_{B_r}\abs{y}^{1-N}\,\rd y=\omega_{N-1}r, \qquad \int_{B_r}\abs{y}^{2-N}\,\rd y=\frac 12 \omega_{N-1}r^2.$$ For $N=2$ the second bound is an identity. Integrating in $x$ gives the bound on $I(\Omega)$; the gradient estimate follows from \eqref{eq:grad}, proved below.

(b) Write
\eq{
    k(x,y)=\begin{cases}
    \Gamma(x-y), & N\geq3,\\
    \frac{1}{2\pi}\log\frac{\abs{x-y}}{1+\abs{y}}, & N=2.
    \end{cases}
}
In both cases $\nabla_x k(x,y)=\nabla\Gamma(x-y)$. Fix $\rho>0$ and split
$\chi_\Omega=\chi_{\Omega\cap B_{2\rho}}+\chi_{\Omega\backslash B_{2\rho}}$.
The potential of $\chi_{\Omega\cap B_{2\rho}}$ differs from $\Gamma*\chi_{\Omega\cap B_{2\rho}}$ by a constant and is $C^1$, with gradient given by \eqref{eq:grad} \cite[Lemma~4.1]{GT}.

If $x\in B_\rho$ and $y\notin B_{2\rho}$, then $\tfrac{1}{2}\abs{y}\leq\abs{x-y}\leq\tfrac{3}{2}\abs{y}$, so $k(x,y)$ and $\nabla_xk(x,y)$ are uniformly bounded on $B_\rho\times(\R^N\backslash B_{2\rho})$. Since $\abs{\Omega}<\infty$, differentiation under the integral is justified on $B_\rho$. As $\rho>0$ is arbitrary, this proves $u_\Omega\in C^1(\R^N)$ and \eqref{eq:grad}.

To verify the distributional identity $\Delta u_\Omega=\chi_\Omega$, let $\phi\in C_c^\infty(\R^N)$. Integration by parts gives
\eq{
    \int_{\R^N} u_\Omega\,\Delta\phi\,\rd x=-\int_{\R^N}\nabla u_\Omega\cdot\nabla\phi\,\rd x.
}
By \eqref{eq:grad}, part (a), and Fubini's theorem,
\eq{
    -\int_{\R^N}\nabla u_\Omega\cdot\nabla\phi\,\rd x
    &= -\int_\Omega\int_{\R^N}\nabla\Gamma(x-y)\cdot\nabla\phi(x)\,\rd x\,\rd y
    \\&=\int_\Omega\phi(y)\,\rd y,
}
using $\Delta\Gamma=\delta_0$. Hence $\Delta u_\Omega=\chi_\Omega$ in $\mathcal{D}'(\R^N)$.

(c) Assume $N\geq3$. Let $f$ be bounded with compact support, and set $u_f=\Gamma*f$. By Green's formula on $B_\rho$ (after mollification) and the decay estimates
$u_f=O(\abs{x}^{2-N})$ and $\nabla u_f=O(\abs{x}^{1-N})$ as $\abs{x}\to\infty$,
\eq{
    \int_{\R^N}\abs{\nabla u_f}^2\,\rd x=-\int_{\R^N}f\,u_f\,\rd x.
}
By polarization this implies
\eq{
    \int_{\R^N}\nabla u_f\cdot\nabla u_g\,\rd x
    =\frac{1}{(N-2)\omega_{N-1}}\iint_{\R^N\times\R^N} f(x)g(y)\abs{x-y}^{2-N}\,\rd x\,\rd y.
}
Apply this with $f=g=\chi_{\Omega_k}$, where $\Omega_k\coloneqq \Omega\cap B_k$. For $j<k$,
\eq{
    \int_{\R^N}\abs{\nabla u_{\Omega_k}-\nabla u_{\Omega_j}}^2\,\rd x
    &=\frac{1}{(N-2)\omega_{N-1}}\iint_{(\Omega_k\backslash\Omega_j)^2}\abs{x-y}^{2-N}\,\rd x\,\rd y
    \\&\leq \frac{1}{(N-2)\omega_{N-1}}\iint_{(\Omega\backslash\Omega_j)^2}\abs{x-y}^{2-N}\,\rd x\,\rd y.
}
The right-hand side tends to $0$ as $j\to\infty$ by part (a) and dominated convergence, so $\nabla u_{\Omega_k}$ is a  Cauchy seqeunce in $L^2(\R^N)$.
Moreover, $\nabla u_{\Omega_k}(x)\to\nabla u_\Omega(x)$ pointwise by dominated convergence in \eqref{eq:grad}. Therefore $\nabla u_\Omega\in L^2(\R^N)$ and, by monotone convergence,
\eq{
    \int_{\R^N}\abs{\nabla u_\Omega}^2\,\rd x
    =\lim_{k\to\infty}\frac{I(\Omega_k)}{(N-2)\omega_{N-1}}
    =\frac{I(\Omega)}{(N-2)\omega_{N-1}}.
}
This proves \eqref{eq:total-energy}.

(d) The distributional identity gives $\Delta(u_\Omega-\abs{x}^2/(2N))=0$ in $\Omega$, and Weyl's lemma yields the interior assertion. The same argument gives harmonicity in $\R^N\backslash\overline\Omega$.
\end{proof}

\begin{lemma}\label{lem:rays}
For every $x\in\R^N$ the function $\lambda(x,\cdot)$ is integrable on $\Sph$, and
\eq{
    \int_{\Sph}\lambda(x,\theta)\,\rd\theta=\int_\Omega\abs{x-y}^{1-N}\,\rd y.
}
Moreover,
\eq{
    \nabla u_\Omega(x)
    =-\frac{1}{\omega_{N-1}}\int_{\Sph}\theta\,\lambda(x,\theta)\,\rd\theta
    =-\frac{1}{2\omega_{N-1}}\int_{\Sph}\theta\,\delta(x,\theta)\,\rd\theta.
}
\end{lemma}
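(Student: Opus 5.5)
The plan is to pass to polar coordinates centred at $x$ and then apply Tonelli's theorem. For fixed $x$, write $y=x+r\theta$ with $r>0$ and $\theta\in\Sph$, so that $\rd y=r^{N-1}\,\rd r\,\rd\theta$. We may take $\Omega$ Borel, which makes $(r,\theta)\mapsto\chi_\Omega(x+r\theta)$ measurable. Tonelli's theorem then gives
\[
\int_\Omega\abs{x-y}^{1-N}\,\rd y=\int_{\Sph}\int_0^\infty \chi_\Omega(x+r\theta)\,r^{1-N}r^{N-1}\,\rd r\,\rd\theta=\int_{\Sph}\lambda(x,\theta)\,\rd\theta .
\]
This is exactly the cancellation of the polar Jacobian against the kernel described in Remark~\ref{rem:scope}. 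By Lemma~\ref{lem:pot}(a), the right-hand side is at most $\omega_{N-1}r_\Omega<\infty$. Hence $\lambda(x,\cdot)\in L^1(\Sph)$, and in particular $\lambda(x,\theta)<\infty$ for a.e.\ $\theta$. The same measurability also shows that $\theta\mapsto\lambda(x,\theta)$ is measurable.

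For the gradient formula, I will start from \eqref{eq:grad}, which is established in Lemma~\ref{lem:pot}(b). In polar coordinates, $(x-y)/\abs{x-y}^N=-\theta\, r^{1-N}$. The integrand $\chi_\Omega(y)\,\theta\, r^{1-N}$ is absolutely integrable, since its norm is bounded by the integrable function from the first step. Fubini's theorem, applied componentwise, therefore gives
\[
\nabla u_\Omega(x)=-\frac{1}{\omega_{N-1}}\int_{\Sph}\theta\int_0^\infty\chi_\Omega(x+r\theta)\,\rd r\,\rd\theta=-\frac{1}{\omega_{N-1}}\int_{\Sph}\theta\,\lambda(x,\theta)\,\rd\theta .
\]

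For the symmetrised form, I will use the fact that the antipodal map $\theta\mapsto-\theta$ preserves surface measure. Applying it to the previous formula gives $\int\theta\lambda(x,\theta)\,\rd\theta=-\int\theta\lambda(x,-\theta)\,\rd\theta$. Both integrals are finite because $\lambda(x,\cdot)\in L^1(\Sph)$. Averaging the two expressions yields $\int\theta\lambda(x,\theta)\,\rd\theta=\tfrac12\int\theta\,\delta(x,\theta)\,\rd\theta$. Here $\delta(x,\cdot)$ is well defined a.e.\ and integrable, since it is a difference of two $L^1$ functions.

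The main obstacle is measure-theoretic bookkeeping rather than any genuine difficulty. Specifically, one has to justify joint measurability in $(r,\theta)$, which is where the Borel representative is used, and one has to check that the pointwise formula holds for \emph{every} $x$ and not merely almost every $x$. Both points are secured by the uniform bound in Lemma~\ref{lem:pot}(a), which holds at every $x$, together with the fact that \eqref{eq:grad} holds at every point because $u_\Omega\in C^1$.
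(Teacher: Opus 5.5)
Your proof is correct and follows the paper's argument essentially step for step. You use polar coordinates centred at $x$ so that the Jacobian $r^{N-1}$ cancels the kernel, take finiteness from Lemma~\ref{lem:pot}(a), make the same substitution in \eqref{eq:grad}, and apply the antipodal change $\theta\mapsto-\theta$ for the symmetrised form. Your measure-theoretic remarks (the Borel representative, absolute integrability for Fubini, validity at every $x$) only make explicit what the paper covers in its preliminaries.
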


\begin{proof}
In polar coordinates $y=x+r\theta$ one has $\rd y=r^{N-1}\,\rd r\,\rd\theta$ and $(x-y)\abs{x-y}^{-N}=-\theta\,r^{1-N}$. Hence
\eq{
    \int_\Omega\frac{\rd y}{\abs{x-y}^{N-1}}
    =\int_{\Sph}\int_0^\infty\chi_\Omega(x+r\theta)\,\rd r\,\rd\theta
    =\int_{\Sph}\lambda(x,\theta)\,\rd\theta,
}
which is finite by Lemma~\ref{lem:pot}(a). Using the same change of variables in \eqref{eq:grad} gives the first expression for $\nabla u_\Omega(x)$. In fact,
\eq{%\label{eq:grad}
  \omega_{N-1}   \nabla u_\Omega(x)=\int_\Omega
    \frac{x-y}{\abs{x-y}^N}\,\rd y = 
    - \int_{\Sph}\int_0^\infty\chi_\Omega(x+r\theta)\theta \,\rd r\,\rd\theta
    =\int_{\Sph}\theta\lambda(x,\theta)\,\rd\theta,
}
Changing $\theta$ to $-\theta$ gives
\eq{
    \int_{\Sph}\theta\,\lambda(x,\theta)\,\rd\theta
    =\frac{1}{2}\int_{\Sph}\theta\,\big(\lambda(x,\theta)-\lambda(x,-\theta)\big)\,\rd\theta
    =\frac{1}{2}\int_{\Sph}\theta\,\delta(x,\theta)\,\rd\theta.
}
\end{proof}

\begin{lemma}\label{lem:1d}
Let $E\subset\R$ be measurable with $\sigma=\abs{E}<\infty$, and set $F(t)=\abs{E\cap(-\infty,t)}$.
\begin{enumerate}[(a)]
\item For every Borel function $\varphi\geq0$ on $[0,\sigma]$,
\eq{
    \int_E\varphi\big(F(t)\big)\,\rd t=\int_0^\sigma\varphi(s)\,\rd s.
}
\item One has $\int_E(\sigma-2F(t))^2\,\rd t=\sigma^3/3$.
\item One has $\G(E)\geq0$. More precisely,
\eq{
    \G(E)=\iint_{E\times E}\Big(\abs{s-t}-\abs{F(s)-F(t)}\Big)\,\rd s\,\rd t,
}
where for $s<t$ the integrand equals $\abs{(s,t)\backslash E}\geq0$. Moreover, $\G(E)=0$ if and only if $E$ is an interval up to a set of measure zero, where the empty interval is allowed.
\end{enumerate}
\end{lemma}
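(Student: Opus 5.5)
The plan is to treat $F$ as a measure-preserving rearrangement that maps $E$ onto $[0,\sigma]$, and to derive all three parts from this one fact.

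\textbf{Part (a).} The function $F$ is nondecreasing and $1$-Lipschitz, with $F(-\infty)=0$ and $F(+\infty)=\sigma$. It suffices to show that the pushforward of Lebesgue measure restricted to $E$ under $F$ is Lebesgue measure on $[0,\sigma]$. By a monotone class argument, it is enough to check intervals $\varphi=\chi_{[0,a)}$ with $0\le a\le\sigma$. Set $t_a=\sup\{t:F(t)<a\}$. Since $F$ is continuous and nondecreasing, $\{t:F(t)<a\}=(-\infty,t_a)$. Therefore
\eq{
    \abs{E\cap\{F<a\}}=F(t_a)=a .
}
The general Borel $\varphi\ge0$ then follows by monotone convergence.

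\textbf{Part (b).} Apply (a) with $\varphi(s)=(\sigma-2s)^2$. This gives
\eq{
    \int_E(\sigma-2F(t))^2\,\rd t=\int_0^\sigma(\sigma-2s)^2\,\rd s=\sigma^3/3 .
}

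\textbf{Part (c).} The first step is to apply (a) in each variable separately to the function $\abs{F(s)-F(t)}$. This yields
\eq{
    \iint_{E\times E}\abs{F(s)-F(t)}\,\rd s\,\rd t=\int_0^\sigma\!\!\int_0^\sigma\abs{a-b}\,\rd a\,\rd b=\sigma^3/3 .
}
Subtracting this from $\iint_{E\times E}\abs{s-t}\,\rd s\,\rd t$ gives the stated formula for $\G(E)$. Next, for $s<t$,
\eq{
    t-s-(F(t)-F(s))=(t-s)-\abs{E\cap(s,t)}=\abs{(s,t)\backslash E}\ge0,
}
so $\G(E)\ge0$.

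\textbf{Equality case of (c).} If $E$ is a.e.\ an interval, then $\abs{(s,t)\backslash E}=0$ for all $s<t$ in that interval, and the integrand vanishes on $E\times E$ up to a null set. Conversely, suppose $\G(E)=0$. Then $\abs{(s,t)\backslash E}=0$ for a.e.\ $(s,t)\in E\times E$ with $s<t$. Restrict to the set $E'$ of Lebesgue density points of $E$; it has full measure in $E$. Let $a=\operatorname{ess\,inf}E$ and $b=\operatorname{ess\,sup}E$, and choose density points $s_k\downarrow a$ and $t_k\uparrow b$. Near any pair of density points $(s_0,t_0)$, the set of $(s,t)\in E\times E$ has positive measure in every neighborhood. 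Hence there are pairs $(s,t)$ with $s$ arbitrarily close to $s_0$, $t$ arbitrarily close to $t_0$, and $\abs{(s,t)\backslash E}=0$. Letting these pairs approach $(s_0,t_0)$ gives $\abs{(s_0,t_0)\backslash E}=0$, since the quantity is continuous in the endpoints. Exhausting $(a,b)$ by the intervals $(s_k,t_k)$ shows that $E$ coincides a.e.\ with $(a,b)$. The case $\sigma=0$ is the empty interval.

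The main obstacle is this converse: the a.e.\ condition on $E\times E$ has to be upgraded to a statement about every pair of density points, and the continuity argument above is what achieves that.
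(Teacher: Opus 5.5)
Your proof is correct and follows essentially the paper's route: (b) and the formula in (c) are the same, and your converse in (c) uses the same two ingredients as the paper, namely the Lipschitz dependence of $\abs{(s,t)\backslash E}$ on the endpoints and the positive measure of $E$ near density points, argued directly rather than by contradiction with the paper's bound $\eta-2\varepsilon$. The only real difference is in (a), where you check the pushforward identity on the intervals $[0,a)$ and extend by a monotone class argument, instead of invoking the change-of-variables formula for the absolutely continuous function $F$ with $F'=\chi_E$ a.e.
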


\begin{proof}
The conclusions are immediate if $\sigma=0$. Assume $\sigma>0$.

(a) The function $F$ is nondecreasing and $1$-Lipschitz, with $F'=\chi_E$ a.e.. The change-of-variables formula for absolutely continuous monotone functions gives
\eq{
    \int_E\varphi(F(t))\,\rd t
    =\int_\R\varphi(F(t))F'(t)\,\rd t
    =\int_0^\sigma\varphi(s)\,\rd s.
}

(b) Apply (a) with $\varphi(s)=(\sigma-2s)^2$.

(c) Applying part (a) in each variable,
\eq{
    \iint_{E\times E}\abs{F(s)-F(t)}\,\rd s\,\rd t
    =\iint_{[0,\sigma]^2}\abs{a-b}\,\rd a\,\rd b
    =\frac{\sigma^3}{3}.
}
This gives the formula for $\G(E)$. For $s<t$,
\eq{
    \abs{s-t}-\abs{F(s)-F(t)}=(t-s)-(F(t)-F(s))=\abs{(s,t)\backslash E}\geq0.
}

If $E$ is an interval up to a set of measure zero, then $\abs{(s,t)\backslash E}=0$ for a.e.\ $s<t$ in $E$, so the integrand vanishes a.e.\ on $E\times E$.
Conversely, suppose that $E$ does not agree almost everywhere with an interval, and let $E'\subset E$ be its density points. The interior of the convex hull of $E'$ is a countable union of intervals with endpoints in $E'$. Since this convex hull does not agree almost everywhere with $E$, there are $s_0<t_0$ in $E'$ such that
$\eta\coloneqq \abs{(s_0,t_0)\backslash E}>0$.
For $0<\varepsilon<\min\{\eta/4,(t_0-s_0)/4\}$, all
$s\in E\cap(s_0-\varepsilon,s_0+\varepsilon)$ and $t\in E\cap(t_0-\varepsilon,t_0+\varepsilon)$ satisfy $\abs{(s,t)\backslash E}\geq\eta-2\varepsilon>0$, and these pairs have positive measure. Therefore $\G(E)>0$, a contradition.
\end{proof}

\begin{lemma}\label{lem:BP}
For every measurable function $\Phi\geq0$ on $\R^N\times\R^N$,
\eq{
    \iint_{\R^N\times\R^N}\Phi(x,y)\,\rd x\,\rd y
    =\frac{1}{2}\int_{\Sph}\int_{\theta^\perp}\iint_{\R^2}\Phi(p+s\theta,p+t\theta)\,\abs{s-t}^{N-1}\,\rd s\,\rd t\,\rd p\,\rd\theta.
}
Consequently,
\eq{\label{eq:chordpower}
    I(\Omega)
    &=\frac{1}{2}\int_{\Sph}\int_{\theta^\perp}\iint_{E_{\theta,p}\times E_{\theta,p}}\abs{s-t}\,\rd s\,\rd t\,\rd p\,\rd\theta\\
    &=\frac{1}{6}\int_{\Sph}\int_{\theta^\perp}\sigma(\theta,p)^3\,\rd p\,\rd\theta
    +\frac{1}{2}\int_{\Sph}\int_{\theta^\perp}\G(E_{\theta,p})\,\rd p\,\rd\theta.
}
\end{lemma}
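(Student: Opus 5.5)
The plan is to prove the Blaschke--Petkantschin-type formula by a direct change of variables, and then specialize it. Consider the map
\eq{
    \Psi:\{(\theta,p,s,t):\theta\in\Sph,\ p\in\theta^\perp,\ s,t\in\R\}\to\R^N\times\R^N,
    \qquad \Psi(\theta,p,s,t)=(p+s\theta,\,p+t\theta).
}
Off the set $\{s=t\}$, which has measure zero, each pair $(x,y)$ with $x\neq y$ has exactly two preimages: $\theta=\pm(y-x)/\abs{y-x}$, with $p$ the projection of $x$ onto $\theta^\perp$. This double cover is the source of the factor $\tfrac12$.

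To compute the Jacobian, I will use the substitution $y=x+r\theta$ with $r=t-s$ and $x=p+s\theta$. First, write $\rd x\,\rd y=\rd x\,r^{N-1}\,\rd r\,\rd\theta$, where $r>0$ and $\theta\in\Sph$; this is the polar coordinate formula in $y$ for fixed $x$. Then, for fixed $\theta$, decompose $x=p+s\theta$ with $\rd x=\rd p\,\rd s$. This gives
\eq{
    \iint\Phi\,\rd x\,\rd y=\int_{\Sph}\int_{\theta^\perp}\int_\R\int_{t>s}\Phi(p+s\theta,p+t\theta)(t-s)^{N-1}\,\rd t\,\rd s\,\rd p\,\rd\theta .
}
Next, the substitution $\theta\to-\theta$, $(s,t)\to(-s,-t)$ maps the region $t>s$ onto $t<s$ and leaves the integrand unchanged. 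Averaging the two expressions yields the integral over all $(s,t)$ with weight $\abs{s-t}^{N-1}$ and the factor $\tfrac12$. Since all integrands are nonnegative and measurable, every interchange of integrals is justified by Tonelli's theorem, and Fubini's theorem is not needed. This avoids explicitly computing a Jacobian of $\Psi$, which I expect to be the only delicate point; the remaining steps are bookkeeping. The measurability of the composed integrand follows because it is a Borel function of $(\theta,p,s,t)$, once $\Omega$ is replaced by a Borel representative as allowed in Section~\ref{sec:prelim}.

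For \eqref{eq:chordpower}, I will take $\Phi(x,y)=\chi_\Omega(x)\chi_\Omega(y)\abs{x-y}^{2-N}$. Since $\abs{x-y}=\abs{s-t}$, the weight becomes $\abs{s-t}^{N-1}\abs{s-t}^{2-N}=\abs{s-t}$. Moreover, $\chi_\Omega(p+s\theta)\chi_\Omega(p+t\theta)=\chi_{E_{\theta,p}}(s)\chi_{E_{\theta,p}}(t)$, which gives the first line. The second line follows from the definition \eqref{G}: the inner integral equals $\sigma^3/3+\G(E_{\theta,p})$, and multiplying by $\tfrac12$ gives $\sigma^3/6+\G/2$. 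Both pieces are nonnegative by Lemma~\ref{lem:1d}(c), so the split is legitimate even though the terms are not a priori finite separately. Finiteness follows afterwards from $I(\Omega)<\infty$, which holds by Lemma~\ref{lem:pot}(a).
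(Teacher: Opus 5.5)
Your proof is correct and is essentially the paper's argument: polar coordinates in $y$ about $x$, the splitting $x=p+s\theta$, the symmetrization $(\theta,s,t)\mapsto(-\theta,-s,-t)$ to pass from $\{t>s\}$ to all of $\R^2$, and then the choice $\Phi(x,y)=\chi_\Omega(x)\chi_\Omega(y)\abs{x-y}^{2-N}$. One detail should be stated explicitly: $\G(E_{\theta,p})$ is only defined when $\sigma(\theta,p)<\infty$, and this holds for a.e.\ $(\theta,p)$ because $\int_{\theta^\perp}\sigma(\theta,p)\,\rd p=\abs{\Omega}$, as the paper remarks.
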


\begin{proof}
Write $y=x+\rho\theta$ in polar coordinates, and then write $x=p+s\theta$ with $p\in\theta^\perp$. Then
\eq{
    \iint_{\R^N\times\R^N}\Phi(x,y)\,\rd x\,\rd y
    &=\int_{\Sph}\int_0^\infty\!\!\int_{\R^N}\Phi(x,x+\rho\theta)\,\rd x\,\rho^{N-1}\,\rd\rho\,\rd\theta\\
    &=\int_{\Sph}\int_{\theta^\perp}\iint_{\{t>s\}}\Phi(p+s\theta,p+t\theta)\,(t-s)^{N-1}\,\rd s\,\rd t\,\rd p\,\rd\theta.
}
Changing $(\theta,s,t)$ to $(-\theta,-s,-t)$ gives the same expression over $\{t<s\}$. Averaging proves the formula.

For \eqref{eq:chordpower}, take $\Phi(x,y)=\chi_\Omega(x)\chi_\Omega(y)\abs{x-y}^{2-N}$. On each line $p+\R\theta$ one has $\abs{x-y}=\abs{s-t}$, so
$\abs{x-y}^{2-N}\abs{s-t}^{N-1}=\abs{s-t}$. Now use the definition of $\G$, \eqref{G}. Finally, note that $\sigma(\theta,p)<\infty$ for a.e.\ $(\theta,p)$, since $\int_{\theta^\perp}\sigma(\theta,p)\,\rd p=\abs{\Omega}$.
\end{proof}

The interaction kernel leaves the factor $\abs{s-t}$ in the line formula, while the gradient kernel cancels the polar Jacobian in Lemma~\ref{lem:rays}. For $N=2$, \eqref{eq:chordpower} reads
\eq{
    \abs{\Omega}^2=\frac{1}{2}\int_{\S^1}\!\int_{\theta^\perp}\iint_{E_{\theta,p}\times E_{\theta,p}}\abs{s-t}\,\rd s\,\rd t\,\rd p\,\rd\theta.
}

\subsection{The inequality and the deficit identity}
\begin{proof}[Proof of Theorem~\ref{thm:deficit}]
Since
$\int_{\Sph}\theta_i\theta_j\,\rd\theta=\frac{\omega_{N-1}}{N}\delta_{ij}$,
the coordinate functions $\theta_1,\dots,\theta_N$ are orthogonal in $L^2(\Sph)$ with squared norms $\omega_{N-1}/N$. Hence, for $f\in L^2(\Sph)$,
\eq{
    P_1f=\frac{N}{\omega_{N-1}}\sum_{i=1}^N\<f,\theta_i\>\,\theta_i,
    \qquad
    \norm{P_1f}^2=\frac{N}{\omega_{N-1}}\Abs{\int_{\Sph}\theta\,f(\theta)\,\rd\theta}^2.
}
We use the same formula to define $P_1f$ for $f\in L^1(\Sph)$.

By Lemma~\ref{lem:rays}, for every $x\in\R^N$ we have
\eq{\label{eq:P1}
    P_1\delta(x,\cdot)(\theta)=\frac{N}{\omega_{N-1}}\Big\<\int_{\Sph}\psi\,\delta(x,\psi)\,\rd\psi,\theta\Big\>
    &=-2N\<\nabla u_\Omega(x),\theta\>,\\
    \abs{\nabla u_\Omega(x)}^2&=\frac{1}{4N\omega_{N-1}}\norm{P_1\delta(x,\cdot)}^2.
}
Thus, whenever $\delta(x,\cdot)\in L^2(\Sph)$, orthogonality gives
\eq{\label{eq:pointwise}
    \abs{\nabla u_\Omega(x)}^2
    =\frac{1}{4N\omega_{N-1}}\Big(\norm{\delta(x,\cdot)}^2-\norm{\delta(x,\cdot)-P_1\delta(x,\cdot)}^2\Big).
}

Fix $\theta\in\Sph$ and $p\in\theta^\perp$ with $\sigma=\sigma(\theta,p)<\infty$, and set $F(t)=\abs{E_{\theta,p}\cap(-\infty,t)}$. For $x=p+t\theta\in\Omega$ (i.e.\ $t\in E_{\theta,p}$),
\eq{
    \lambda(x,\theta)=\sigma-F(t),
    \qquad
    \lambda(x,-\theta)=F(t),
    \qquad
    \delta(x,\theta)=\sigma-2F(t).
}
By Lemma~\ref{lem:1d}(b),
$\int_{E_{\theta,p}}\delta(p+t\theta,\theta)^2\,\rd t=\sigma^3/3$.
Integrating in $p\in\theta^\perp$ and $\theta\in\Sph$ gives
\eq{\label{eq:lines}
    \int_\Omega\norm{\delta(x,\cdot)}^2\,\rd x
    =\frac{1}{3}\int_{\Sph}\int_{\theta^\perp}\sigma(\theta,p)^3\,\rd p\,\rd\theta
    \leq 2I(\Omega)<\infty,
}
where the bound follows from \eqref{eq:chordpower} and Lemma~\ref{lem:pot}(a). In particular, $\delta(x,\cdot)\in L^2(\Sph)$ for a.e.\ $x\in\Omega$.
Integrating \eqref{eq:pointwise} over $\Omega$ and using \eqref{eq:lines} yields \eqref{eq:id1}. Dividing \eqref{eq:chordpower} by $2N\omega_{N-1}$ gives \eqref{eq:id2}. Their difference is \eqref{eq:deficit}.
\end{proof}

We now prove Theorem  \ref{thm:main}.

\begin{proof}[Proof of Theorem~\ref{thm:main}]
Both terms on the right-hand side of \eqref{eq:deficit} are nonnegative, the second by Lemma~\ref{lem:1d} (c). This proves \eqref{eq:main}.

For $N\geq3$, part~(ii) follows from \eqref{eq:total-energy}. For $N=2$, identifying vectors with complex numbers in \eqref{eq:grad} gives $\Cc_\Omega=2\overline{\nabla u_\Omega}$. Thus part~(i) follows from $I(\Omega)=\abs{\Omega}^2$.

Let $B$ be a ball centered at $c$. A line through $x\in B$ in direction $\theta$ meets $B$ in a segment whose midpoint $m$ satisfies $\<m-c,\theta\>=0$, hence
\eq{
    \delta(x,\theta)=2\<m-x,\theta\>=-2\<x-c,\theta\>
    \qquad(x\in B,\ \theta\in\Sph).
}
Thus $\delta(x,\cdot)=P_1\delta(x,\cdot)$, every line section is an interval, and both deficit terms vanish. By \eqref{eq:P1} we also obtain $\nabla u_B(x)=(x-c)/N$ on $B$. We leave the proof of equality in the next subsection.
\end{proof}
\subsection{The equality case}\label{sec:rigidity}

We use two lemmas to  study the equality case.

\begin{lemma}\label{lem:convex}
Let $\Omega\subset\R^N$ be measurable with $0<\abs{\Omega}<\infty$, and suppose that
\eq{\label{eq:star}
    \mathcal{H}^1\big([x,y]\backslash\Omega\big)=0\qquad\text{for almost every }(x,y)\in\Omega\times\Omega.
}
Then the set $\Omega^{(1)}$ of points of Lebesgue density one of $\Omega$ is open, bounded, and convex, and $\abs{\Omega\triangle\Omega^{(1)}}=0$.
\end{lemma}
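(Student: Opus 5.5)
We prove Lemma~\ref{lem:convex} by showing that $\Omega^{(1)}$ is convex as a set of points, and only afterwards deducing openness and boundedness. Write $D=\Omega^{(1)}$. By the Lebesgue density theorem $\abs{\Omega\triangle D}=0$. Since segments of positive length that differ only in their endpoints are $\mathcal{H}^1$-a.e.\ the same, hypothesis \eqref{eq:star} still holds with $\Omega$ replaced by $D$.

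\emph{Step 1: convexity of $D$.} Fix $a,b\in D$ and $z=(1-\tau)a+\tau b$ with $\tau\in(0,1)$; we must show $z\in D$. For small $r>0$ consider the map $(x,y)\mapsto(1-\tau)x+\tau y$ on $B_r(a)\times B_r(b)$. Because $a$ and $b$ are density points, $(D\cap B_r(a))\times(D\cap B_r(b))$ fills all but a fraction $o(1)$ of $B_r(a)\times B_r(b)$. By \eqref{eq:star}, for a.e.\ such pair and a.e.\ $\tau'\in(0,1)$, the point $(1-\tau')x+\tau' y$ lies in $D$. By Fubini we can therefore pick $\tau'$ close to $\tau$ such that, for almost all good pairs, the point $(1-\tau')x+\tau'y$ lies in $D$.

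Now fix $y$ in a set of positive-fraction measure. The map $x\mapsto(1-\tau')x+\tau'y$ is a homothety with ratio $1-\tau'$, and it sends $B_r(a)$ onto a ball of radius $(1-\tau')r$ centred near $z$. Hence $D$ occupies all but an $o(1)$ fraction of $B_{(1-\tau')r}((1-\tau')a+\tau'y)$. This ball is contained in $B_{Cr}(z)$ and its volume is comparable to that of $B_{Cr}(z)$. That gives a lower density bound at $z$, but not yet density one.

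To upgrade to density one, average the statement over $y\in B_r(b)$. For each $w\in B_{cr}(z)$, the set of pairs $(x,y)$ with $(1-\tau)x+\tau y=w$ is an affine slice. The pushforward of the uniform measure on $B_r(a)\times B_r(b)$ under $(x,y)\mapsto(1-\tau)x+\tau y$ has a density bounded below by a constant times $r^{-N}$ on $B_{cr}(z)$. Since the mass of bad pairs is $o(r^{2N})$, we get $\abs{B_{cr}(z)\setminus D}=o(r^N)$, so $z\in D$. The point that needs care is the perturbation from $\tau$ to $\tau'$; we handle it by using the fact that \eqref{eq:star} holds for a.e.\ $\tau'$, applying the estimate uniformly for $\tau'$ in a small interval, and absorbing the shift into the constant $c$. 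We expect this density upgrade to be the main obstacle.

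\emph{Step 2: openness and boundedness.} A convex set of positive measure has nonempty interior, so $D$ is a convex set with nonempty interior. Every boundary point of a convex body has density at most $1/2$, because it has a supporting half-space. Hence no boundary point belongs to $D$, and $D=\operatorname{int}D$ is open. Finally, an unbounded open convex set contains a ball $B$ together with a ray direction $v$, and then it contains $B+\R_{+}v$, which has infinite measure. This contradicts $\abs{D}=\abs{\Omega}<\infty$, so $D$ is bounded.
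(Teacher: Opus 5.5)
Your argument is correct, but it runs in the opposite order from the paper's.

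\textbf{The paper's route.} It proves openness first. The function $h_\mu=\chi_{(1-\mu)\Omega}*\chi_{\mu\Omega}$ is continuous. For $\mu$ in the full-measure set $M$ of good ratios, the open set $\{h_\mu>0\}$ lies a.e.\ in $\Omega$, hence in $\Omega^{(1)}$. Moreover $h_\mu((1-\mu)x+\mu y)>0$ for all $x,y\in\Omega^{(1)}$. Taking $y=x$ gives openness. Convexity then follows by picking $\mu\in M$ near $\tau$ and shifting the \emph{endpoints} inside small balls contained in $\Omega^{(1)}$, so that the convex combination is unchanged.

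\textbf{Your route.} You prove convexity of the density-point set directly, using a quantitative, localized form of the same object. For Lebesgue measure on $B_r(a)\times B_r(b)$, your pushforward density under $(x,y)\mapsto(1-\mu)x+\mu y$ is exactly $(\mu(1-\mu))^{-N}\,\chi_{(1-\mu)B_r(a)}*\chi_{\mu B_r(b)}$. Its lower bound $\gtrsim r^N$ on $B_{cr}(z_\mu)$, with $z_\mu=(1-\mu)a+\mu b$, set against the $o(r^{2N})$ mass of pairs outside $\Omega\times\Omega$, gives $\abs{B_{cr}(z_\mu)\backslash\Omega}=o(r^N)$. You shift the \emph{point} rather than the endpoints, absorbing $\abs{z-z_\mu}$ into the radius. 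Openness and boundedness then come from convex geometry. A supporting half-space gives upper density at most $1/2$ at points of $D\backslash\operatorname{int}D$, while every point of $D$ is a density point of $D$. Recession directions give boundedness.

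\textbf{What each buys.} The paper's route is shorter and needs only positivity of $h_\mu$: no uniform constants, and no supporting-hyperplane or recession-cone facts. Yours avoids the continuity trick and shows directly that convex combinations of density points are density points.

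\textbf{Two small repairs.}
\begin{enumerate}
\item Your reason for transferring \eqref{eq:star} to $D$ is not valid. A Lebesgue null set can contain whole segments, so the issue is not about endpoints. The correct reason is Tonelli: for every $\mu$,
\[
\iint_{\Omega\times\Omega}\chi_{\Omega\backslash D}((1-\mu)x+\mu y)\,\rd x\,\rd y\leq\mu^{-N}\abs{\Omega}\,\abs{\Omega\backslash D}=0 .
\]
The transfer is in any case unnecessary: work with $\Omega$ throughout and conclude $\abs{B_{cr/2}(z)\backslash\Omega}=o(r^N)$.
\item Make the perturbation explicit. Your $\tau'$ must depend on $r$: choose $\mu_r\in M$ with $\abs{\mu_r-\tau}\,\abs{b-a}\leq cr/2$. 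This is possible because $M$ has full measure, hence is dense, and it gives $B_{cr/2}(z)\subset B_{cr}(z_{\mu_r})$. The argument closes because two bounds are uniform for $\mu$ in a compact subinterval of $(0,1)$. The first is the density lower bound. The second is the bound on bad pairs, since for $\mu\in M$ the bad pairs are contained, up to a null set, in the complement of $\Omega\times\Omega$.
\end{enumerate}
The intermediate paragraph that fixes $y$ and uses the homothety contributes nothing and can be dropped.
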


\begin{proof}
First of all, by the Lebesgue density theorem, $\abs{\Omega\triangle\Omega^{(1)}}=0$.
Since
\eq{
    \mathcal{H}^1([x,y]\backslash\Omega)=\abs{x-y}\int_0^1\chi_{\R^N\backslash\Omega}((1-\mu)x+\mu y)\,\rd\mu,
}
Tonelli's theorem and \eqref{eq:star} imply
\eq{
    \int_0^1\iint_{\Omega\times\Omega}\chi_{\R^N\backslash\Omega}\big((1-\mu)x+\mu y\big)\,\rd x\,\rd y\,\rd\mu=0.
}
Let $M\subset(0,1)$ be the set of $\mu$ for which the inner double integral vanishes. Then $M$ has full measure in $(0,1)$.

For $\mu\in(0,1)$ define $h_\mu=\chi_{(1-\mu)\Omega}*\chi_{\mu\Omega}$, so that
\eq{
    h_\mu(w)=\abs{(1-\mu)\Omega\cap(w-\mu\Omega)}.
}
The function $h_\mu$ is continuous, being the convolution of an integrable function with a bounded one. If $\mu\in M$, then
\eq{
    \int_{\R^N\backslash\Omega}h_\mu(w)\,\rd w
    =(1-\mu)^N\mu^N\iint_{\Omega\times\Omega}\chi_{\R^N\backslash\Omega}\big((1-\mu)x+\mu y\big)\,\rd x\,\rd y
    =0.
}
Consequently, the open set $\{h_\mu>0\}$ is contained in $\Omega$ up to a null set, and hence in $\Omega^{(1)}$.

Let $x,y\in\Omega^{(1)}$, fix $\mu\in(0,1)$, and set $w=(1-\mu)x+\mu y$ and $\kappa=(1-\mu)/\mu$. Substituting $a=(1-\mu)(x+\xi)$ in the definition of $h_\mu$ gives $w-a=\mu(y-\kappa\xi)$. Therefore, for every $\rho>0$,
\eq{
    h_\mu(w)
    &=(1-\mu)^N\abs{\{\xi\in\R^N:\ x+\xi\in\Omega,\ y-\kappa\xi\in\Omega\}}\\
    &\geq (1-\mu)^N\Big(\abs{B_\rho}-\abs{B_\rho(x)\backslash\Omega}-\kappa^{-N}\abs{B_{\kappa\rho}(y)\backslash\Omega}\Big).
}
Since $x$ and $y$ are density points of $\Omega$, the bracket is positive for sufficiently small $\rho$, hence $h_\mu(w)>0$.

Taking $y=x$ and $\mu\in M$ shows that every $x\in\Omega^{(1)}$ belongs to the open set $\{h_\mu>0\}\subset\Omega^{(1)}$. Thus $\Omega^{(1)}$ is open. It is nonempty by the density theorem.

For convexity, let $x,y\in\Omega^{(1)}$ and $\tau\in(0,1)$. Choose $\varepsilon>0$ with $B_\varepsilon(x)\cup B_\varepsilon(y)\subset\Omega^{(1)}$, and pick $\mu\in M$ such that $\abs{\mu-\tau}\,\abs{y-x}<\varepsilon$. Then
$x'=x+(\tau-\mu)(y-x)$ and $y'=y+(\tau-\mu)(y-x)$ lie in $\Omega^{(1)}$, and
$(1-\mu)x'+\mu y'=x+\tau(y-x)$. The positivity argument gives $h_\mu((1-\mu)x'+\mu y')>0$, so this point lies in $\Omega^{(1)}$. Thus $\Omega^{(1)}$ is convex.

Finally, any open convex set of finite measure is bounded: if it contains $B_\varepsilon(a)$ and a point $b$, then it contains the convex hull of $B_\varepsilon(a)\cup\{b\}$, whose volume is bounded below by a positive multiple of $\varepsilon^{N-1}\abs{b-a}$.
\end{proof}

\begin{lemma}\label{lem:convexrays}
Let $K\subset\R^N$ be open, bounded, and convex, and define $\lambda$ and $\delta$ with $K$ in place of $\Omega$. For $x\in K$ and $\theta\in\Sph$, the line through $x$ in direction $\theta$ meets $K$ in the open segment
\eq{
    \{x+r\theta:\ -\lambda(x,-\theta)<r<\lambda(x,\theta)\}.
}
Moreover, $\lambda$ is continuous on $K\times\Sph$. In particular, if $x,\,x+r\theta\in K$, then
\eq{\label{eq:shift}
    \delta(x+r\theta,\theta)=\delta(x,\theta)-2r.
}
\end{lemma}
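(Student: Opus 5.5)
The plan is to work ray by ray, using only that $K$ is open and convex, and then to get continuity from the radial function of $K$ about a moving centre.

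For fixed $x\in K$ and $\theta\in\Sph$, set $J=\{r\in\R: x+r\theta\in K\}$. This is the preimage of the open convex set $K$ under an affine map. Hence $J$ is an open interval containing $0$, and it is bounded because $K$ is bounded. Write $J=(-a,b)$ with $a,b>0$. By definition, $\lambda(x,\theta)=\abs{J\cap(0,\infty)}=b$, and likewise $\lambda(x,-\theta)=a$. This gives the description of the chord as the open segment $\{x+r\theta:\ -\lambda(x,-\theta)<r<\lambda(x,\theta)\}$.

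For continuity, I will show that $b(x,\theta)=\sup\{r>0: x+r\theta\in K\}$ is both lower and upper semicontinuous on $K\times\Sph$.
\begin{enumerate}[(i)]
\item Lower semicontinuity. If $r<b(x,\theta)$, then $x+r\theta\in K$, which is open. So $x'+r\theta'\in K$ for all $(x',\theta')$ near $(x,\theta)$, and therefore $b(x',\theta')>r$ there.
\item Upper semicontinuity. Let $r>b(x,\theta)$, so that $z=x+r\theta\notin K$. Since $K$ is convex and open, I will separate $z$ from $K$ by a hyperplane: there is $\nu$ with $\<\nu,k-z\><0$ for all $k\in K$. Alternatively, I can apply the standard fact that the points $x+s\theta$ with $s>b$ lie outside $\overline K$ when $x$ is interior (a segment from an interior point to a closure point lies in $K$ except possibly at its endpoint). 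Then take $r'$ with $b<r'<r$. The point $x+r'\theta$ is not in $\overline K$, because otherwise the segment from $x$ to it would lie in $K$, contradicting $r'>b$. As $\R^N\backslash\overline K$ is open, $x'+r'\theta'\notin\overline K$ for nearby $(x',\theta')$. By convexity and $x'\in K$, this forces $b(x',\theta')<r'$.
\end{enumerate}
Continuity of $\lambda(x,-\theta)$ follows by composing with $\theta\mapsto-\theta$.

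For \eqref{eq:shift}, note that if $x$ and $x'=x+r\theta$ both lie in $K$, they lie on the same line in direction $\theta$. The intervals for $x'$ are therefore the intervals for $x$ shifted by $-r$. This gives $\lambda(x',\theta)=\lambda(x,\theta)-r$ and $\lambda(x',-\theta)=\lambda(x,-\theta)+r$, and subtracting yields $\delta(x',\theta)=\delta(x,\theta)-2r$.

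The main technical point is upper semicontinuity, that is, excluding a jump down in the exit distance. I will handle it with the interior-point segment lemma for convex sets (the open segment from an interior point to a point of the closure lies in the interior), as in step (ii). Everything else is routine.
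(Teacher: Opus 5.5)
Your proof is correct and follows the paper's argument essentially step by step. The chord description comes from convexity and openness of the line section, lower semicontinuity from openness of $K$, and upper semicontinuity from the fact that $x+r\theta\notin\overline K$ for $r>\lambda(x,\theta)$ (the interior-point segment property), followed by the same shift computation for $\delta$; the intermediate $r'$ and the hyperplane-separation alternative are harmless but unnecessary, since the paper works directly with $r$.
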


\begin{proof}
The intersection of $K$ with a line is convex, open (as a subset of that line), and bounded, which gives the segment description and \eqref{eq:shift}.

For lower semicontinuity of $\lambda$ at $(x,\theta)$, let $0<r<\lambda(x,\theta)$. Then $x+r\theta\in K$, hence $x'+r\theta'\in K$ for all $(x',\theta')$ sufficiently close to $(x,\theta)$, and therefore $\lambda(x',\theta')\geq r$.

For upper semicontinuity, let $r>\lambda(x,\theta)$. Then $x+r\theta\notin\overline K$; otherwise, since $x$ is an interior point of the convex set $\overline K$, the half-open segment $[x,x+r\theta)$ would lie in $K$, giving $\lambda(x,\theta)\geq r$. By closedness, there is a neighborhood of $(x,\theta)$ such that $x'+r\theta'\notin\overline K$ throughout that neighborhood. If $\lambda(x',\theta')\geq r$, then $[x',x'+r\theta']\subset\overline K$, hence $x'+r\theta'\in\overline K$, a contradiction. Therefore $\lambda(x',\theta')<r$, proving upper semicontinuity.
\end{proof}

\begin{theorem}\label{thm:rigidity}
Let $\Omega\subset\R^N$, $N\geq2$, be measurable with $0<\abs{\Omega}<\infty$. If equality holds in \eqref{eq:main}, then $\Omega$ coincides with a ball up to a set of measure zero.
\end{theorem}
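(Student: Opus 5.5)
We would prove the theorem by showing that equality forces both deficit terms in \eqref{eq:deficit} to vanish. From this we extract two things: a convex representative of $\Omega$, and a pointwise identity for the ray asymmetry $\delta$. The geometry of chord midpoints then forces a ball.

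\emph{Step 1 (convexity).} If equality holds in \eqref{eq:main}, then $\mathcal D(\Omega)=0$. Since both terms of \eqref{eq:deficit} are nonnegative, each vanishes. By Remark~\ref{rem:deficitforms}, specifically \eqref{eq:pair}, the vanishing of the $\G$-term gives
\[
\iint_{\Omega\times\Omega}\mathcal H^1([x,y]\backslash\Omega)\,\abs{x-y}^{1-N}\,\rd x\,\rd y=0,
\]
which is exactly hypothesis \eqref{eq:star}. Lemma~\ref{lem:convex} then shows that $K\coloneqq\Omega^{(1)}$ is open, bounded and convex, with $\abs{\Omega\triangle K}=0$. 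Replacing $\Omega$ by $K$ changes neither $u_\Omega$ nor the integrated deficit terms. Hence $\norm{\delta(x,\cdot)-P_1\delta(x,\cdot)}=0$ for a.e.\ $x\in K$, where $\delta$ is now computed from $K$.

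\emph{Step 2 (from a.e.\ to everywhere, and a linear gradient).} By \eqref{eq:P1}, the vanishing of this term means that for a.e.\ $x\in K$,
\[
\delta(x,\theta)=-2N\<\nabla u_K(x),\theta\> \quad\text{for a.e. } \theta .
\]
By Lemma~\ref{lem:convexrays}, $\delta$ is continuous on $K\times\Sph$, and $\nabla u_K$ is continuous by Lemma~\ref{lem:pot}(b). Hence the identity holds for all $x\in K$ and all $\theta$. Now fix $x\in K$ and $\theta$, and let $x+r\theta\in K$. Combining the identity with the shift rule \eqref{eq:shift} gives
\[
\<\nabla u_K(x+r\theta)-\nabla u_K(x),\theta\>=r/N .
\]
Since $u_K\in C^\infty(K)$ by Lemma~\ref{lem:pot}(d), differentiating in $r$ gives $\<D^2u_K(x)\theta,\theta\>=1/N$ for every $\theta$, so $D^2u_K=I/N$ on $K$. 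As $K$ is connected, $\nabla u_K(x)=(x-c)/N$ for some $c\in\R^N$. Consequently
\[
\delta(x,\theta)=-2\<x-c,\theta\> \qquad\text{for all } x\in K,\ \theta\in\Sph .
\]

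\emph{Step 3 (midpoints give a ball).} For $x\in K$, Lemma~\ref{lem:convexrays} shows that the chord through $x$ in direction $\theta$ has midpoint $m=x+\tfrac12\delta(x,\theta)\theta$. Step~2 then gives $\<m-c,\theta\>=\<x-c,\theta\>-\<x-c,\theta\>=0$. So every chord of $K$ in direction $\theta$ is bisected by the hyperplane $c+\theta^\perp$. This means $K$ is invariant under the reflection in $c+\theta^\perp$, for every $\theta\in\Sph$. These reflections generate the full orthogonal group about $c$. Thus $K$ is a rotation-invariant, open, bounded, convex set, that is, an open ball centred at $c$. Since $\abs{\Omega\triangle K}=0$, $\Omega$ is a ball up to a null set.

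The main obstacle we expect is Step~2. There one must upgrade the almost-everywhere vanishing of the first deficit term to a pointwise identity valid for every $x$ and $\theta$. This is precisely where the continuity of $\lambda$ for convex $K$ (Lemma~\ref{lem:convexrays}) and the interior smoothness of $u_K$ are essential. A more cautious route avoids second derivatives entirely: use the shift identity along segments to show directly that $\nabla u_K(x)-x/N$ has zero directional increments, and is therefore constant on the connected set $K$.
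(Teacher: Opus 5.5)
Your proof is correct. Steps~1 and~2 agree in substance with the paper's proof. The paper reaches \eqref{eq:star} by applying Lemma~\ref{lem:BP} to $\Phi=\chi_{\mathcal B}$ instead of quoting \eqref{eq:pair}, but this is the same computation.

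The genuine difference is the final step. Once $\nabla u_K=(x-c)/N$ on $K$, the paper leaves the chord data behind and argues potential-theoretically. The angular derivatives $L_{ij}u_K=x_i\p_ju_K-x_j\p_iu_K$ vanish on $\overline K$, are harmonic in $\R^N\backslash\overline K$, and are $O(\abs{x}^{1-N})$ at infinity, so the maximum principle gives $L_{ij}u_K\equiv0$. Thus $u_K$, and hence $\chi_K=\Delta u_K$, is rotation invariant. Since open convex sets that agree a.e.\ coincide, $K$ is a ball.

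You instead read the identity $\delta(x,\theta)=-2\<x-c,\theta\>$ geometrically. The chord midpoint $x+\tfrac12\delta(x,\theta)\theta$ is the orthogonal projection of $x$ onto $c+\theta^\perp$, so $K$ is symmetric in every hyperplane through $c$. You do not even need the Cartan--Dieudonn\'e generation statement: the reflection in the perpendicular bisector of $[x,y]$ already carries $x$ to any $y$ with $\abs{y-c}=\abs{x-c}$.

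Your route is shorter and entirely elementary, with no exterior harmonicity, decay estimate or maximum principle. The paper's route uses only the interior identity $\nabla u_K=(x-c)/N$, plus convexity of $K$ at the very end. It therefore proves an Aharonov--Schiffer--Zalcman-type rigidity statement for the potential that does not depend on the ray structure.

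One caution about your closing aside. The shift identity only gives $\<G(x+r\theta)-G(x),\theta\>=0$ for $G=\nabla u_K-x/N$. Every antisymmetric linear field $G(x)=Ax$ also satisfies this. To conclude that $G$ is constant you must use that $G=\nabla\big(u_K-\abs{x}^2/(2N)\big)$, for example via the fact that a $C^1$ function which is affine on every segment of a convex set is affine. Your main argument via $D^2u_K$ is unaffected.
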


\begin{proof}
Both terms in \eqref{eq:deficit} vanish. In particular, $\G(E_{\theta,p})=0$ for a.e.\ $(\theta,p)$. By Lemma~\ref{lem:1d}(c), this implies that for such $(\theta,p)$ the set $E_{\theta,p}$ is an interval up to a set of measure zero. Hence $\mathcal{H}^1([x,y]\backslash\Omega)=0$ for almost every pair of points in almost every such line section.

Let $\mathcal{B}\subset\Omega\times\Omega$ be the set of pairs $(x,y)$ such that $\mathcal{H}^1([x,y]\backslash\Omega)>0$. Applying Lemma~\ref{lem:BP} with $\Phi=\chi_{\mathcal{B}}$ yields $\abs{\mathcal{B}}=0$, which is exactly \eqref{eq:star}. By Lemma~\ref{lem:convex}, the set $K\coloneqq \Omega^{(1)}$ is open, bounded, and convex, and $\abs{\Omega\triangle K}=0$.

Replacing $\Omega$ by $K$ does not change $\nabla u_\Omega$, $\Ein(\Omega)$, or $I(\Omega)$, so equality holds in \eqref{eq:main} for $K$ as well.
From now on $\lambda$ and $\delta$ refer to $K$, and we write $u=u_K$.

The vanishing of the first deficit term gives $\delta(x,\cdot)=P_1\delta(x,\cdot)$ in $L^2(\Sph)$ for a.e.\ $x\in K$. By \eqref{eq:P1},
\eq{\label{eq:linear}
    \delta(x,\theta)=-2N\<\nabla u(x),\theta\>.
}
By Lemmas~\ref{lem:convexrays} and~\ref{lem:pot}(b), both sides are continuous on $K\times\Sph$. For a.e.\ $x\in K$ the identity holds for a.e.\ $\theta$, hence for all $\theta$; the set of such $x$ is dense in $K$. By continuity, \eqref{eq:linear} holds for all $(x,\theta)\in K\times\Sph$.

Fix $x\in K$, $\theta\in\Sph$, and $r$ sufficiently small. By \eqref{eq:shift} and \eqref{eq:linear},
\eq{
    \<\nabla u(x+r\theta)-\nabla u(x),\theta\>=\frac{r}{N}.
}
Since $u\in C^\infty(K)$ by Lemma~\ref{lem:pot}(d), differentiating at $r=0$ gives
$\<D^2u(x)\theta,\theta\>=\frac{1}{N}$ for all $\theta\in\Sph$. As $D^2u(x)$ is symmetric, it follows that $D^2u\equiv\frac{1}{N} I$ on $K$. Since $K$ is connected, there exists $c\in\R^N$ such that
\eq{
    \nabla u(x)=\frac{x-c}{N}\qquad(x\in K).
}
After translating, we may assume $c=0$. By continuity, $\nabla u(x)=x/N$ on $\overline K$.

For $1\leq i<j\leq N$, let $L_{ij}=x_i\p_j-x_j\p_i$, the generator of rotations in the $(x_i,x_j)$-plane. Since $u\in C^1(\R^N)$, the function $L_{ij}u$ is continuous on $\R^N$. The formula for $\nabla u$ shows that it vanishes on $K$, hence on $\overline K$. On $U\coloneqq \R^N\backslash\overline K$ it is harmonic, because $u$ is harmonic there and $\Delta L_{ij}=L_{ij}\Delta$.

Moreover, if $\overline K\subset B_{R_0}$, then
\eq{
    L_{ij}u(x)=\frac{1}{\omega_{N-1}}\int_K\frac{x_jy_i-x_iy_j}{\abs{x-y}^N}\,\rd y
    =O(\abs{x}^{1-N})\qquad(\abs{x}\to\infty).
}
Applying the maximum principle to $\pm L_{ij}u$ on $U\cap B_\rho$ and letting $\rho\to\infty$ yields $L_{ij}u\equiv0$ on $U$, hence on all of $\R^N$.

Let $A$ be antisymmetric. For $x\in\R^N$ one has
$\frac{\rd}{\rd t}u(e^{tA}x)=\<\nabla u(e^{tA}x),Ae^{tA}x\>$, and
$\<\nabla u(z),Az\>=\sum_{i<j}A_{ji}\,L_{ij}u(z)=0$.
It follows that $u(e^{tA}x)=u(x)$ for all $t$, and hence $u\circ R=u$ for every $R\in SO(N)$. Since $\Delta$ commutes with rotations, it follows that $\chi_K\circ R=\chi_K$ a.e., i.e.\ $\abs{R^{-1}K\triangle K}=0$.

Two open convex sets $A,B$ with $\abs{A\triangle B}=0$ coincide: if $A\backslash\overline B$ were nonempty it would contain an open set of positive measure. Thus $A\subset\overline B$, hence $A\subset\operatorname{int}\overline B=B$, and by symmetry $A=B$. Therefore $R^{-1}K=K$ for every rotation $R$.

Since $SO(N)$ acts transitively on spheres centered at $0$, the nonempty open convex set $K$ is a union of such spheres. It contains the convex hull of each sphere it meets, i.e.\ a closed ball. Being open, $K$ is therefore an open ball $B_R(0)$, with $\abs{B_R}=\abs{K}=\abs{\Omega}$. Undoing the translation, we conclude that $\Omega$ is a ball up to a set of measure zero.
\end{proof}

In geometric tomography \cite[Ch.~5]{Ga}, $\lambda(x,\cdot)$ is the directed X-ray of $\Omega$ at $x$. The proof shows that a convex body is a ball if $\delta(x,\cdot)$ is linear in $\theta$ at every interior point.

\section{Proofs of the corollaries and examples}\label{sec:cons}

We prove the corollaries from the introduction and give examples. We first recall the Riesz--Sobolev rearrangement inequality.

A nonnegative measurable function $f$ on $\R^N$ vanishes at infinity if $\abs{\{f>t\}}<\infty$ for every $t>0$. For such $f$, let $f^*$ denote its symmetric decreasing rearrangement. In particular, $(\chi_A)^*=\chi_{A^*}$. We say that $g$ is strictly symmetric decreasing if $g=g^*$ and $g(x)>g(y)$ whenever $\abs{x}<\abs{y}$.

\begin{theorem}\label{thm:RS}
Let $f,g,h$ be nonnegative measurable functions on $\R^N$ vanishing at infinity. Then
\eq{
    \iint f(x)\,g(x-y)\,h(y)\,\rd x\,\rd y
    \leq\iint f^*(x)\,g^*(x-y)\,h^*(y)\,\rd x\,\rd y.
}
If moreover $g$ is strictly symmetric decreasing, $f$ and $h$ are not identically zero, and equality holds with a finite positive value, then $f=f^*(\cdot-a)$ and $h=h^*(\cdot-a)$ almost everywhere, for some $a\in\R^N$.
\end{theorem}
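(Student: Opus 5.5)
This is the classical Riesz--Sobolev inequality, together with the strict equality statement of Lieb (see \cite[Thm.~3.7 and Thm.~3.9]{LL}). The plan is to reproduce the standard proof rather than anything specific to the present paper.

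\emph{Step 1: reduction to sets.} By the layer-cake representation $f=\int_0^\infty\chi_{\{f>a\}}\,\rd a$, and similarly for $g$ and $h$, Tonelli's theorem writes the left-hand side as
\eq{
    \int_0^\infty\!\!\int_0^\infty\!\!\int_0^\infty
    \iint\chi_{\{f>a\}}(x)\,\chi_{\{g>b\}}(x-y)\,\chi_{\{h>c\}}(y)\,\rd x\,\rd y\,\rd a\,\rd b\,\rd c.
}
The rearrangements satisfy $\{f^*>a\}=\{f>a\}^*$, and the level sets have finite measure because the functions vanish at infinity. It therefore suffices to prove
\eq{
    \iint\chi_A(x)\chi_B(x-y)\chi_C(y)\,\rd x\,\rd y\leq\iint\chi_{A^*}(x)\chi_{B^*}(x-y)\chi_{C^*}(y)\,\rd x\,\rd y
}
for sets $A,B,C$ of finite measure. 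By inner regularity and monotone convergence, we may take $A,B,C$ compact, or even finite unions of boxes.

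\emph{Step 2: the set inequality via Steiner symmetrization.} In dimension one, the inequality for finite unions of intervals follows from the classical sliding argument of Riesz. One translates the component intervals of each set toward one another, and checks that the trilinear functional does not decrease. The result is symmetric intervals.

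In $\R^N$, apply Steiner symmetrization in a hyperplane direction simultaneously to $A$, $B$ and $C$. Fubini's theorem in the direction $\theta$ reduces the comparison to the one-dimensional inequality on each line, so the functional does not decrease under Steiner symmetrization.

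Next, choose a sequence of directions along which iterated Steiner symmetrizations of any compact set converge to its ball $A^*$ in the Hausdorff metric, hence in measure. The competing-symmetries method of Carlen--Loss also works here. The functional is continuous under $L^1$ convergence of the three characteristic functions. Hence the inequality follows for $A^*,B^*,C^*$.

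\emph{Step 3: the equality case.} This is the main obstacle. Assume $g$ is strictly symmetric decreasing and that equality holds with a finite positive value. By the layer cake for $f$ and $h$ only (keeping $g$ intact), equality must hold for almost every pair of level sets $A=\{f>a\}$ and $C=\{h>c\}$ in
\eq{
    \iint\chi_A(x)\,g(x-y)\,\chi_C(y)\,\rd x\,\rd y\leq\iint\chi_{A^*}(x)\,g(x-y)\,\chi_{C^*}(y)\,\rd x\,\rd y.
}

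First I will use strict monotonicity of $g$ in the one-dimensional sliding step. If two translates are not concentric, then moving them together strictly increases the integral. This is the step where strictness of $g$ is essential. Through Steiner symmetrization, it forces every line section of $A$ and $C$ in every direction to be an interval centred on a common hyperplane, up to null sets. A set that is Steiner-symmetric about some hyperplane in every direction is a ball. So $A$ and $C$ are balls with a common centre $a$.

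Finally, the centre must be independent of the levels. Two nested families of level sets, each a ball, with pairwise common centres, force a single centre $a$. Hence $f=f^*(\cdot-a)$ and $h=h^*(\cdot-a)$ almost everywhere.

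I expect the rigidity in the one-dimensional step, and its transfer through the Steiner approximation, to need the most care. For this I would follow Lieb's argument in \cite[Thm.~3.9]{LL} directly.
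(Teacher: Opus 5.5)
Your proposal takes essentially the paper's route. The paper gives no proof of Theorem~\ref{thm:RS}: it cites Riesz and Sobolev via \cite[Thm.~3.7]{LL} for the inequality, and Lieb via \cite[Thm.~3.9]{LL} and \cite{Bu} for the equality case; your Steps~1--2 are the standard Steiner-symmetrization proof behind the first citation, and your Step~3 defers to the second. One caveat: your remark about sliding non-concentric intervals does not show that the line sections must be intervals, because strictness is lost when passing to limits of finite unions of intervals. So in Step~3 the rigidity rests entirely on the appeal to Lieb's theorem, exactly as in the paper.
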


The inequality is due to F.~Riesz in dimension one \cite{Ri} and to Sobolev in higher dimensions \cite{So}; see \cite[Thm.~3.7]{LL}. The equality statement is due to Lieb \cite{Li}; see \cite[Thm.~3.9]{LL} and \cite{Bu} for more general equality results.

For $N\geq3$, taking $f=h=\chi_\Omega$ and $g(z)=\abs{z}^{2-N}$ yields $I(\Omega)\leq I(\Omega^*)$, with equality only for balls up to sets of measure zero. By Lemma~\ref{lem:pot}(c), this is equivalent to $\E(\Omega)\leq\E(\Omega^*)$.

\begin{proof}[Proof of Corollary~\ref{cor:ball}]
The first inequality follows from Theorem~\ref{thm:main}. The second is an equality for $N=2$ and follows from Theorem~\ref{thm:RS} for $N\geq3$.

A ball attains equality in Theorem~\ref{thm:main}, so $I(\Omega^*)/(2N\omega_{N-1})=\Ein(\Omega^*)$. Writing $r=r_\Omega$ and $\omega_{N-1}=Ns_N$,
\eq{
    \Ein(\Omega^*)
    =\int_{B_r}\frac{\abs{x}^2}{N^2}\,\rd x
    =\frac{\omega_{N-1}r^{N+2}}{N^2(N+2)}
    =\frac{\abs{\Omega}\,r^2}{N(N+2)}
    =\frac{\abs{\Omega}}{N(N+2)}\Big(\frac{\abs{\Omega}}{s_N}\Big)^{2/N}.
}
If the outer terms are equal, then equality holds in Theorem~\ref{thm:main}. Theorem~\ref{thm:rigidity} implies that $\Omega$ is a ball up to a set of measure zero. Conversely, the computation above shows that balls attain equality.
\end{proof}

\begin{proof}[Proof of Corollary~\ref{cor:SV}]
Let $\varphi\in C_c^\infty(\Omega)$. Since $u_\Omega\in C^1(\R^N)$ and $\Delta u_\Omega=1$ in $\mathcal{D}'(\Omega)$, integration by parts gives
$\int_\Omega\nabla u_\Omega\cdot\nabla\varphi\,\rd x=-\int_\Omega\varphi\,\rd x$. Hence
\eq{
    2\int_\Omega\varphi\,\rd x-\int_\Omega\abs{\nabla\varphi}^2\,\rd x
    &=-2\int_\Omega\nabla u_\Omega\cdot\nabla\varphi\,\rd x-\int_\Omega\abs{\nabla\varphi}^2\,\rd x\\
    &=\int_\Omega\abs{\nabla u_\Omega}^2\,\rd x-\int_\Omega\abs{\nabla u_\Omega+\nabla\varphi}^2\,\rd x
    \leq\int_\Omega\abs{\nabla u_\Omega}^2\,\rd x.
}
Taking the supremum over $\varphi$ yields the first inequality.
Equivalently, $-\nabla w_\Omega$ is the orthogonal projection of $\nabla u_\Omega$ onto the closure of $\{\nabla\varphi:\varphi\in C_c^\infty(\Omega)\}$ in $L^2(\Omega;\R^N)$.

The next two inequalities are Corollary~\ref{cor:ball}. For a ball $B$ of radius $R$ centered at $0$, we have $w_B=(R^2-\abs{x}^2)/(2N)$ and $\nabla u_B=x/N$ on $B$. Thus
$T(B)=\int_B\abs{x}^2/N^2\,\rd x=\Ein(B)=\frac{1}{2N\omega_{N-1}}I(B)$, which gives the final equality.

If $T(\Omega)=T(\Omega^*)$, then equality holds throughout the chain. Theorem~\ref{thm:rigidity}, applied to the second inequality, gives the conclusion.
\end{proof}

The estimate $T(\Omega)\leq\frac{1}{2N\omega_{N-1}}I(\Omega)$ reads $T(\Omega)\leq\abs{\Omega}^2/(8\pi)$ when $N=2$. For $N\geq3$, combining it with the Kohler--Jobin inequality gives the following lower bound for the Newtonian energy.

\begin{corollary}\label{cor:KJ}
Let $N\geq3$ and let $\Omega\subset\R^N$ be open with $\abs{\Omega}\in(0,\infty)$. Denote by $\Lambda_1(\Omega)$ its first Dirichlet eigenvalue. Then
\eq{
    \E(\Omega)
    \;\geq\;\frac{2N}{N-2}\,T(\Omega)
    \;\geq\;\frac{2N}{N-2}\,T(B_1)\Big(\frac{\Lambda_1(B_1)}{\Lambda_1(\Omega)}\Big)^{\frac{N+2}{2}}
    \;=\;\frac{2\,s_N}{(N-2)(N+2)}\Big(\frac{j_{N/2-1,1}^{\,2}}{\Lambda_1(\Omega)}\Big)^{\frac{N+2}{2}},
}
where $j_{N/2-1,1}$ is the first positive zero of the Bessel function $J_{N/2-1}$. Equality holds throughout if and only if $\Omega$ is a ball up to a set of zero capacity.
\end{corollary}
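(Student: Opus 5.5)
The plan is to chain three ingredients: the scale-free torsion bound from Corollary~\ref{cor:SV}, the Kohler--Jobin inequality, and an explicit computation on the unit ball.

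\textbf{First inequality.} Corollary~\ref{cor:SV} gives, for $N\geq3$ and open $\Omega$ of finite measure,
\[
T(\Omega)\leq\Ein(\Omega)\leq\frac{I(\Omega)}{2N\omega_{N-1}}=\frac{N-2}{2N}\,\E(\Omega),
\]
where the last identity is Lemma~\ref{lem:pot}(c). Rearranging gives $\E(\Omega)\geq\frac{2N}{N-2}T(\Omega)$.

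\textbf{Second inequality.} I will invoke the Kohler--Jobin inequality as a black box: among open sets of finite measure, the scale-invariant quantity $T(\Omega)\Lambda_1(\Omega)^{(N+2)/2}$ is minimized by balls. (Scale invariance holds because $T$ scales like length$^{N+2}$ and $\Lambda_1$ like length$^{-2}$.) This is exactly
\[
T(\Omega)\geq T(B_1)\Big(\frac{\Lambda_1(B_1)}{\Lambda_1(\Omega)}\Big)^{(N+2)/2}.
\]

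\textbf{Constant.} Use $w_{B_1}=(1-\abs{x}^2)/(2N)$, as in the proof of Corollary~\ref{cor:SV}. This gives
\[
T(B_1)=\int_{B_1}\frac{\abs{x}^2}{N^2}\,\rd x=\frac{\omega_{N-1}}{N^2(N+2)}=\frac{s_N}{N(N+2)}.
\]
Together with $\Lambda_1(B_1)=j_{N/2-1,1}^2$, this yields
\[
\frac{2N}{N-2}\,T(B_1)=\frac{2s_N}{(N-2)(N+2)},
\]
which is the stated right-hand side.

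\textbf{Equality case.} I would avoid relying on a fine equality statement for Kohler--Jobin and work only from the first inequality.

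If equality holds throughout, then in particular $T(\Omega)=\Ein(\Omega)=I(\Omega)/(2N\omega_{N-1})$. So equality holds in Theorem~\ref{thm:main}, and Theorem~\ref{thm:rigidity} shows $\abs{\Omega\triangle B}=0$ for some open ball $B$.

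Since $\Omega$ is open, $\Omega\backslash\overline B$ is open and null, hence empty. Thus $\Omega\subset\operatorname{int}\overline B=B$, and $\abs{\Omega}=\abs{B}$.

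Next, equality $T(\Omega)=\Ein(\Omega)=\Ein(B)=T(B)$ holds, using that $\Ein$ depends only on the a.e.\ class of the set. Because $W^{1,2}_0(\Omega)\subset W^{1,2}_0(B)$ and the maximizer of the strictly concave torsion functional on $W^{1,2}_0(B)$ is unique, it follows that $w_B\in W^{1,2}_0(\Omega)$.

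Hence the quasi-continuous representative of $w_B$ vanishes quasi-everywhere on $B\backslash\Omega$. But $w_B>0$ on $B$, so $\operatorname{cap}(B\backslash\Omega)=0$.

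\textbf{Converse.} If $\Omega=B\backslash Z$ with $\operatorname{cap}(Z)=0$, then $W^{1,2}_0(\Omega)=W^{1,2}_0(B)$. Therefore $T$ and $\Lambda_1$ are unchanged, and so is $\E$, since $Z$ is null. A ball attains equality in Corollary~\ref{cor:SV} and trivially in Kohler--Jobin, so equality holds throughout.

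\textbf{Main obstacle.} The step I expect to need the most care is the capacity argument: identifying $W^{1,2}_0(B\backslash Z)$ with $W^{1,2}_0(B)$ exactly when $\operatorname{cap}(Z)=0$. I will treat this with standard quasi-continuity facts. I will also state precisely which version of Kohler--Jobin (for arbitrary open sets of finite measure) is being cited.
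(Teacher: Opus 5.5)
Your proposal is correct and follows the paper's proof essentially step for step. The first inequality comes from Corollary~\ref{cor:SV} combined with Lemma~\ref{lem:pot}(c), and the second from Kohler--Jobin with $T(B_1)=s_N/(N(N+2))$ and $\Lambda_1(B_1)=j_{N/2-1,1}^2$. The equality case uses Theorem~\ref{thm:rigidity}, then uniqueness of the torsion maximizer in $W^{1,2}_0(B)$ applied to the zero extension of $w_\Omega$, then quasi-continuity to get $\operatorname{cap}(B\backslash\Omega)=0$; the converse uses $W^{1,2}_0(B\backslash Z)=W^{1,2}_0(B)$ for $Z$ of zero capacity, exactly as in the paper.
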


\begin{proof}
The first inequality is Corollary~\ref{cor:SV} in the form $T(\Omega)\leq\frac{N-2}{2N}\E(\Omega)$.
The second follows from the Kohler--Jobin inequality
$\Lambda_1(\Omega)\,T(\Omega)^{2/(N+2)}\geq\Lambda_1(B_1)\,T(B_1)^{2/(N+2)}$ \cite{KJ,Br}. For the unit ball,
$T(B_1)=\int_{B_1}\abs{x}^2/N^2\,\rd x=s_N/(N(N+2))$ and $\Lambda_1(B_1)=j_{N/2-1,1}^2$.

Assume equality holds throughout. By Corollary~\ref{cor:SV} and Theorem~\ref{thm:rigidity}, $\Omega$ agrees almost everywhere with a ball $B$. Since $\Omega$ is open, $\Omega\subset B$. We also have $T(\Omega)=T(B)$. Extend $w_\Omega$ by zero to $B$. The torsion functional has a unique maximizer in $W^{1,2}_0(B)$, so the extended function equals $w_B$. Its quasi-continuous representative vanishes quasi-everywhere on $B\backslash\Omega$, whereas $w_B>0$ in $B$. Therefore $B\backslash\Omega$ has zero  capacity. Conversely, removing a set of zero capacity from a ball changes neither $W^{1,2}_0$, the first Dirichlet eigenvalue, nor the field energy. This proves the equality statement.
\end{proof}

\begin{remark}\label{rem:range}
For $N\geq3$, Theorem~\ref{thm:main} and the examples below describe the range of the interior fraction $\Ein(\Omega)/\E(\Omega)$: its supremum $\frac{N-2}{2N}$ is attained exactly by balls up to sets of measure zero, while the infimum is $0$ and is not attained.

For $0<h<1$, consider the thin shell $\Omega_h=B_1\backslash B_{1-h}$. Newton's theorem gives
\eq{
    \abs{\nabla u_{\Omega_h}(x)}
    =\frac{\abs{\Omega_h\cap B_{\abs{x}}}}{\omega_{N-1}\abs{x}^{N-1}}
    \leq h
    \qquad (x\in\Omega_h).
}
Hence
\eq{
    \Ein(\Omega_h)\leq h^2\abs{\Omega_h}=O(h^3),
}
whereas
\eq{
    \E(\Omega_h)
    \geq\frac{\abs{\Omega_h}^2}{(N-2)\omega_{N-1}\operatorname{cap}(\overline{B_1})}
}
has a right-hand side of order $h^2$.
Here
\eq{
    \operatorname{cap}(\overline{B_1})^{-1}=\min_\mu\iint\abs{x-y}^{2-N}\,\rd\mu(x)\,\rd\mu(y),
}
with the minimum taken over probability measures $\mu$ on $\overline{B_1}$. The bound holds because $\abs{\Omega_h}^{-1}\chi_{\Omega_h}\,\rd x$ is such a measure. Therefore $\Ein(\Omega_h)/\E(\Omega_h)=O(h)$ as $h\to0$.

Moreover, the infimum is not attained. Indeed, local elliptic regularity gives $u_\Omega\in W^{2,2}_{\mathrm{loc}}(\R^N)$. If $\Ein(\Omega)=0$, then $\nabla u_\Omega=0$ a.e.\ on $\Omega$, and locality of weak derivatives gives $D^2u_\Omega=0$ a.e.\ there. This contradicts $\Delta u_\Omega=1$ a.e.\ on $\Omega$.

In $\R^3$, the formulas in Example~\ref{ex:ellipsoids} give further examples. For a prolate spheroid with semiaxes $(\ell,a,a)$, $\Ein/\E$ is of order $1/\log(\ell/a)$ as $a/\ell\to0$; for an oblate spheroid with semiaxes $(R,R,h)$, it is of order $h/R$ as $h/R\to0$. For $N=2$, ellipses with aspect ratio tending to zero show that $\Ein(\Omega)/\abs{\Omega}^2$ has infimum $0$; see Example~\ref{ex:ellipsoids}.
\end{remark}

\begin{proof}[Proof of Corollary~\ref{cor:bergman}]
Identify $\C$ with $\R^2$ and write $\p=\p_z$, $\bar\p=\p_{\bar z}$, so that $\Delta=4\p\bar\p$ and $\abs{\p g}=\tfrac{1}{2}\abs{\nabla g}$ for real-valued $g$.

Let $w=w_\Omega$ and set $g=-4\p w\in L^2(\Omega)$. Then $\bar\p g=-\Delta w=1=\bar\p\bar z$ in $\mathcal{D}'(\Omega)$, so $\bar z-g$ has a holomorphic representative in $\Omega$. Since $\Omega$ is bounded, we have $\bar z-g\in A^2(\Omega)$.

If $F\in A^2(\Omega)$ and $\varphi\in C_c^\infty(\Omega)$, then
$\int_\Omega\p\varphi\,\overline F=-\int_\Omega\varphi\,\overline{\bar\p F}=0$.
By density the same holds with $\varphi=w$, hence $g\perp A^2(\Omega)$. Thus $\bar z-g$ is the orthogonal projection of $\bar z$ onto $A^2(\Omega)$, and
\eq{
    \lambda_{A^2}(\Omega)^2
    =\norm{g}_{L^2(\Omega)}^2
    =16\int_\Omega\abs{\p w}^2
    =4\int_\Omega\abs{\nabla w}^2
    =4\,T(\Omega).
}
By Corollary~\ref{cor:SV}, $4T(\Omega)\leq4\Ein(\Omega)=\int_\Omega\abs{\Cc_\Omega}^2\leq\abs{\Omega}^2/(2\pi)$. If $\lambda_{A^2}(\Omega)=\abs{\Omega}/\sqrt{2\pi}$, then equality holds in Theorem~\ref{thm:main}, and Theorem~\ref{thm:rigidity} implies that $\Omega$ is a disc up to a set of measure zero.
\end{proof}

One can also see $\lambda_{A^2}(\Omega)\leq\norm{\Cc_\Omega}_{L^2(\Omega)}$ directly: since $\bar\p\Cc_\Omega=\chi_\Omega$, the function $\bar z-\Cc_\Omega$ is holomorphic in $\Omega$, hence belongs to $A^2(\Omega)$.

\begin{proof}[Proof of Corollary~\ref{cor:bergmanND}]
Let $w=w_\Omega\in W^{1,2}_0(\Omega)$ solve $-\Delta w=1$ and set
\eq{
    F_0\coloneqq x+N\nabla w.
}
Then $\rdiv F_0=N+N\Delta w=0$ and $\p_i(F_0)_j=\p_j(F_0)_i$ in the distributional sense. Hence $F_0\in\mathcal{A}^2(\Omega)$. For any $F\in\mathcal{A}^2(\Omega)$, integration by parts and approximation of $w$ by compactly supported smooth functions give
\eq{
    \int_\Omega \<\nabla w,F-F_0\>\,\rd x=0.
}
It follows that
\eq{
    \norm{x-F}_{L^2(\Omega)}^2
    =N^2\norm{\nabla w}_{L^2(\Omega)}^2+\norm{F-F_0}_{L^2(\Omega)}^2.
}
Taking the infimum over $F$ gives
\eq{\label{eq:bergmanND-torsion}
    \lambda_{\mathcal{A}^2}(\Omega)^2=N^2T(\Omega).
}
Moreover, $x-N\nabla u_\Omega\in\mathcal{A}^2(\Omega)$, so
\eq{
    \lambda_{\mathcal{A}^2}(\Omega)^2
    \leq N^2\int_\Omega\abs{\nabla u_\Omega}^2\,\rd x
    \leq\frac{N}{N+2}\Big(\frac{1}{s_N}\Big)^{\frac{2}{N}}\abs{\Omega}^{1+\frac{2}{N}},
}
where we used Corollary~\ref{cor:ball} in the last inequality. Equality in the final bound implies equality in Corollary~\ref{cor:ball}, hence $\Omega$ is a ball up to a set of measure zero. It is clear that balls attain equality. A bounded domain with Lipschitz boundary that agrees almost everywhere with a ball is that ball.
\end{proof}

\begin{remark}\label{rem:geometric}
Assume $\Omega$ is bounded. For each $c\in\R^N$, symmetrizing in $x$ and $y$ gives the virial identity
\eq{\label{eq:virial}
    \int_\Omega\<x-c,\nabla u_\Omega(x)\>\,\rd x
    =\frac{1}{\omega_{N-1}}\iint_{\Omega\times\Omega}\frac{\<x-c,x-y\>}{\abs{x-y}^N}\,\rd x\,\rd y
    =\frac{1}{2\omega_{N-1}}I(\Omega),
}
since the terms involving $c$ cancel by antisymmetry.
Using \eqref{eq:virial}, the inequality \eqref{eq:main} is equivalent to
\eq{
    \Big\<\nabla u_\Omega,\ \nabla u_\Omega-\frac{x-c}{N}\Big\>_{L^2(\Omega)}\leq0,
}
or, equivalently,
\eq{
    \Big\|\nabla u_\Omega-\frac{x-c}{2N}\Big\|_{L^2(\Omega)}\leq\Big\|\frac{x-c}{2N}\Big\|_{L^2(\Omega)}.
}
Thus, in $L^2(\Omega;\R^N)$ the field $\nabla u_\Omega$ lies in the closed ball having $0$ and $(x-c)/N$ as antipodal points, for every $c$. Equality holds if and only if $\Omega$ is a ball up to a set of measure zero. If its center is $c_0$, then $\nabla u_\Omega=(x-c_0)/N$ a.e.\ on $\Omega$.
\end{remark}

\begin{example}\label{ex:ellipsoids}
Let $A$ be a positive definite symmetric matrix and
$Q=\{x\in\R^N:\ \<Ax,x\><1\}$. For $x\in Q$, the numbers $\lambda(x,\theta)$ and $-\lambda(x,-\theta)$ are the roots $r$ of
\eq{
    r^2\<A\theta,\theta\>+2r\<Ax,\theta\>+\<Ax,x\>-1=0,
}
so
\eq{
    \delta(x,\theta)=-\frac{2\<Ax,\theta\>}{\<A\theta,\theta\>}.
}
By Lemma~\ref{lem:rays}, the interior field is linear: $\nabla u_Q(x)=M_Ax$, where
\eq{
    M_A=\frac{1}{\omega_{N-1}}\int_{\Sph}\frac{\theta\otimes\theta}{\<A\theta,\theta\>}\,\rd\theta\;A,
    \qquad \rtr M_A=1.
}
Thus the interior potential of a homogeneous ellipsoid is quadratic, as is classical \cite{Ke}.

For $A=\operatorname{diag}(a_1^{-2},\dots,a_N^{-2})$, the matrix $M_A=\operatorname{diag}(D_1,\dots,D_N)$ is diagonal. For $N=2$,
$D_1=\frac{a_2}{a_1+a_2}$ and $D_2=\frac{a_1}{a_1+a_2}$. For $N=3$ the $D_i$ are the classical depolarization factors
\eq{
    D_i=\frac{a_1a_2a_3}{2}\int_0^\infty\frac{\rd s}{(a_i^2+s)\sqrt{(a_1^2+s)(a_2^2+s)(a_3^2+s)}}
}
\cite{Ke}.

Since $\int_Q x_i^2\,\rd x=\abs{Q}\,a_i^2/(N+2)$, the virial identity \eqref{eq:virial} gives
\eq{
    \frac{\Ein(Q)}{\frac{1}{2N\omega_{N-1}}I(Q)}
    = N\,\frac{\sum_i D_i^2a_i^2}{\sum_i D_ia_i^2}.
}
For an ellipse with semiaxes $a,b$ this equals $\frac{4ab}{(a+b)^2}=1-\big(\frac{a-b}{a+b}\big)^2$; equivalently,
$\int_Q\abs{\Cc_Q}^2=\frac{\abs{Q}^2}{2\pi}\cdot\frac{4ab}{(a+b)^2}$.

For $N\geq3$ one obtains
\eq{
    \frac{\Ein(Q)}{\E(Q)}=\frac{N-2}{2}\,\frac{\sum_i D_i^2a_i^2}{\sum_i D_ia_i^2}.
}
By Theorem~\ref{thm:main}, $N\sum_iD_i^2a_i^2\leq\sum_iD_ia_i^2$. Therefore the depolarization factors satisfy
\eq{\label{eq:depol}
    \sum_{i=1}^N D_i\,a_i^2\,(1-ND_i)\geq 0,
}
with equality if and only if $a_1=\dots=a_N$.

The $D_i$ are homogeneous of degree zero in $(a_1,\dots,a_N)$. Formula~\eqref{eq:depol} is a weighted inequality for these factors.
Since $Q$ is convex, the entire deficit comes from the first term in \eqref{eq:deficit}, which can be computed explicitly from the formula for $\delta$. Numerical values for $N=3$ are listed in Table~\ref{tab:ellipsoids}. For the nonspherical examples, the first term of \eqref{eq:deficit}, evaluated by spherical quadrature, agrees with $\mathcal{D}(Q)$ to relative error below $10^{-10}$. The diagonal of $M_A$ agrees with the integral formula for the $D_i$ to within $10^{-11}$. These are numerical consistency checks.
\end{example}

\begin{table}[ht]
\centering
\begin{tabular}{cccc}
\toprule
Semi-axes $(a_1,a_2,a_3)$ & $(D_1,D_2,D_3)$ & $\Ein(Q)/\E(Q)$ & $\Ein(Q)/\Ein(Q^*)$\\
\midrule
$(1,1,1)$ & $(0.3333,0.3333,0.3333)$ & $0.16667$ & $1$\\
$(2,1,1)$ & $(0.1736,0.4132,0.4132)$ & $0.15190$ & $0.8731$\\
$(1,1,\tfrac{1}{2})$ & $(0.2364,0.2364,0.5272)$ & $0.14990$ & $0.8632$\\
$(3,2,1)$ & $(0.1563,0.2672,0.5765)$ & $0.13725$ & $0.7612$\\
$(1,1,\tfrac{1}{5})$ & $(0.1248,0.1248,0.7505)$ & $0.09598$ & $0.4707$\\
\bottomrule
\end{tabular}
\caption{Ellipsoids in $\R^3$. The bound in Theorem~\ref{thm:main} is $\frac{1}{6}\approx0.16667$, and $Q^*$ is the ball with $\abs{Q^*}=\abs{Q}$.}
\label{tab:ellipsoids}
\end{table}

\begin{example}\label{ex:twoballs}
If $E\subset\R$ is the union of two intervals of lengths $a$ and $b$ separated by a gap of length $g$, then a direct computation gives $\G(E)=2abg$.

Let $\Omega=B_1(0)\cup B_1(3e_1)\subset\R^3$. By Newton's theorem,
$ I(\Omega)=2I(B_1)+\tfrac{2}{3}\abs{B_1}^2$, so
$\frac{1}{6}\E(\Omega)=\frac{92\pi}{405}\approx0.71365$.

The interior field in $B_1(0)$ is
\eq{
    \nabla u_\Omega(x)=\frac{x}{3}+\frac{x-3e_1}{3\abs{x-3e_1}^3},
}
and the other ball is treated by symmetry. The cross term integrates to zero by the mean-value property. Direct integration gives
\eq{
    \Ein(\Omega)=\frac{8\pi}{45}+\frac{8\pi}{9}\left(\frac{1}{16}-\frac{\log2}{12}\right)\approx0.57174,
}
so the interior fraction is about $0.1335$. Numerically, the deficit $\mathcal{D}(\Omega)\approx0.14191$ splits into $0.0679$ from the first term in \eqref{eq:deficit} and $0.0740$ from the nonconvexity term. The latter is obtained by integrating $\G(E)=2abg$ over the lines meeting both balls.
\end{example}

\section{Comparison with the pointwise inequality}\label{sec:martin}

We compare Corollary~\ref{cor:ball} with Martin's sharp pointwise inequality.

Martin \cite{Ma} works in Clifford analysis, with the Cauchy kernel $E(x)=\bar x/(\omega_{N-1}\abs{x}^N)$ and the Teodorescu transform
\eq{
    T_\Omega f(x)=\int_\Omega E(x-y)f(y)\,\rd y.
}
Clifford conjugation preserves the Euclidean norm, so
\eq{
    \abs{T_\Omega1(x)}=\abs{\nabla u_\Omega(x)}\qquad(x\in\R^N).
}

If $\Omega$ is bounded, then $\abs{\nabla u_\Omega}$ is subharmonic on $\R^N\backslash\overline\Omega$ (as the modulus of the gradient of a harmonic function) and tends to $0$ at infinity. Hence
\eq{
    \sup_{\R^N}\abs{\nabla u_\Omega}=\max_{\overline\Omega}\abs{\nabla u_\Omega}.
}
Thus Martin's inequality controls the maximum of $\abs{\nabla u_\Omega}$, while Corollary~\ref{cor:ball} controls its $L^2(\Omega)$ norm.

In the planar case, $E(z)=\bar z/(2\pi\abs{z}^2)$ is half the Cauchy kernel $1/(\pi z)$, so $\Cc_\Omega=2T_\Omega1$ and $\abs{\Cc_\Omega}=2\abs{\nabla u_\Omega}$.

For $N\geq2$, set
\eq{
    K_N\coloneqq \{z\in\R^N:\ \abs{z}^N<z_N\}.
}
In polar coordinates $z=\rho\theta$ with $\theta_N=\cos\varphi$, the set $K_N$ is given by $\{\rho^{N-1}<\cos\varphi\}$.
Using the standard identity
\eq{
    \int_{\{\theta_N>0\}}\theta_N^{\,a}\,\rd\theta
    =\pi^{(N-1)/2}\,\frac{\Gamma\big(\frac{a+1}{2}\big)}{\Gamma\big(\frac{a+N}{2}\big)}
    \qquad(a>-1),
}
with $a=\frac{N}{N-1}$, we obtain
\eq{
    \abs{K_N}
    =\frac{1}{N}\int_{\{\theta_N>0\}}\theta_N^{\,N/(N-1)}\,\rd\theta
    =\frac{\pi^{(N-1)/2}}{N}\,\frac{\Gamma\big(\frac{2N-1}{2N-2}\big)}{\Gamma\big(\frac{2N-1}{2N-2}+\frac{N-1}{2}\big)}.
}
For $N=2$, $K_2$ is the disc of radius $\tfrac{1}{2}$ centered at $(0,\tfrac{1}{2})$; for $N=3$, $\abs{K_3}=\frac{4\pi}{15}=\tfrac{1}{5}\,s_3$.

\begin{theorem}[\cite{Ma}]\label{thm:martin}
Let $\Omega\subset\R^N$ be measurable with $0<\abs{\Omega}<\infty$. For every $x\in\R^N$,
\eq{\label{eq:martin}
    \abs{\nabla u_\Omega(x)}\leq\mu_N\abs{\Omega}^{1/N},\qquad \mu_N=\frac{N}{\omega_{N-1}}\,\abs{K_N}^{(N-1)/N},
}
with equality at $x$ if and only if $\Omega$ coincides up to a set of measure zero with $x+R(\lambda K_N)$ for some $R\in SO(N)$ and $\lambda>0$. Consequently
\eq{
    \sup_{\R^N}\abs{\nabla u_\Omega}\leq c_N\sup_{\R^N}\abs{\nabla u_{\Omega^*}},
}
where
\eq{
    c_N=N\Big(\frac{\abs{K_N}}{s_N}\Big)^{(N-1)/N},
}
and $c_N$ is sharp. One has $c_2=1$, and $c_N>1$ for $N\geq3$.
\end{theorem}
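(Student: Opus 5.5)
Since Theorem~\ref{thm:martin} is attributed to \cite{Ma}, I would reprove it by a bathtub argument in the ray representation. By translation invariance, take $x=0$. Lemma~\ref{lem:rays}, or directly \eqref{eq:grad}, gives
\eq{
    \omega_{N-1}\,\nabla u_\Omega(0)=-\int_\Omega\frac{y}{\abs{y}^N}\,\rd y .
}
Rotating, we may assume $\nabla u_\Omega(0)$ points in the $-e_N$ direction. Then
\eq{
    \omega_{N-1}\abs{\nabla u_\Omega(0)}=\int_\Omega g(y)\,\rd y,
    \qquad g(y)=\frac{y_N}{\abs{y}^N}.
}
Maximizing $\int_\Omega g$ over sets of fixed measure is a bathtub problem \cite[Thm.~1.14]{LL}. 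The maximizer is a superlevel set $\{g>t\}=\{\abs{y}^N<y_N/t\}$. Scaling $y=\lambda z$ shows that $\{g>t\}=\lambda K_N$ with $\lambda^{N-1}=1/t$. Every level set of $g$ has measure zero ($g$ is real-analytic and nonconstant off the origin), and the sets $\lambda K_N$ exhaust all volumes. So for each volume there is a unique $\lambda$ with $\abs{\lambda K_N}=\abs{\Omega}$. The bathtub equality case then says that equality forces $\Omega=\lambda K_N$ a.e., which after undoing the rotation gives $x+R(\lambda K_N)$.

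The value of the bound comes from Lemma~\ref{lem:rays} applied to $\lambda K_N$ at $0$. In polar coordinates, the ray in direction $\theta$ with $\theta_N>0$ meets $K_N$ in the segment $0<\rho<\theta_N^{1/(N-1)}$. Hence
\eq{
    \int_{K_N}g=\int_{\{\theta_N>0\}}\theta_N\cdot\theta_N^{1/(N-1)}\,\rd\theta=N\abs{K_N},
}
using the volume formula already recorded above. Since $g$ is homogeneous of degree $1-N$, we get $\int_{\lambda K_N}g=\lambda N\abs{K_N}$, with $\lambda=(\abs{\Omega}/\abs{K_N})^{1/N}$. This yields
\eq{
    \abs{\nabla u_\Omega(0)}\leq\frac{N}{\omega_{N-1}}\abs{K_N}^{(N-1)/N}\abs{\Omega}^{1/N}=\mu_N\abs{\Omega}^{1/N},
}
as claimed.

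For the consequence, recall that for the ball $\Omega^*$ of radius $r$ we have $\sup\abs{\nabla u_{\Omega^*}}=r/N$, attained on the boundary. This uses $\nabla u_B=x/N$ inside the ball and Newton's theorem outside. Since $r=(\abs{\Omega}/s_N)^{1/N}$, the ratio is
\eq{
    \frac{\mu_N\abs{\Omega}^{1/N}}{r/N}=\frac{N^2}{\omega_{N-1}}\abs{K_N}^{(N-1)/N}s_N^{1/N}=N\Big(\frac{\abs{K_N}}{s_N}\Big)^{(N-1)/N}=c_N,
}
using $\omega_{N-1}=Ns_N$. Sharpness follows by taking $\Omega=K_N$ at $x=0$.

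For $N=2$, $K_2$ is a disc, so $c_2=2\cdot(1/4)^{1/2}=1$. For $N\ge3$, the point is that $K_N$ is not a ball: its boundary $\rho=\theta_N^{1/(N-1)}$ is not the sphere $\rho=\theta_N$. The bathtub maximizer at a boundary point therefore strictly beats the ball. Concretely, the ball $B$ with $0\in\partial B$ and the same volume is an admissible competitor, and it attains only $r/N$ at that point; since $B\neq\lambda K_N$, the strict bathtub inequality gives $c_N>1$. The main obstacle I expect is the measure-theoretic care in the bathtub equality case: one has to check that the level sets of $g$ are null and that $g$ is locally integrable, which is exactly Lemma~\ref{lem:pot}(a). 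The rest is routine.
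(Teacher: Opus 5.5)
Your proposal is correct and follows essentially the same route as the paper. Both use the bathtub argument for $g(z)=z_N/\abs{z}^N$ with superlevel sets $\lambda K_N$, the polar computation $\int_{K_N}g=N\abs{K_N}$, and the ball with $0$ on its boundary as a competitor, which gives $c_N\ge1$, with strictness coming from $K_N$ not being a ball. The omissions are cosmetic: you skip the trivial case $\nabla u_\Omega(0)=0$. You also do not say why comparing $\partial K_N$ with the single sphere $\rho=\theta_N$ suffices. The reason is that $K_N$ is symmetric about the $e_N$-axis and meets it in the segment $(0,e_N)$, so the only candidate ball is $B_{1/2}(e_N/2)$.
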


This is the inequality of \cite[Thm.~2.5 and (2.30)]{Ma}, stated in our normalization. For $N=2$ one has $\abs{K_2}=\pi/4$ and $\mu_2=1/(2\sqrt\pi)$, so \eqref{eq:martin} reduces to the Ahlfors--Beurling inequality \eqref{eq:AB}, with the extremal disc $x+R(\lambda K_2)$ having $x$ on its boundary. We include the short proof in order to fix the constants.

\begin{proof}
We may assume $\nabla u_\Omega(x)\ne0$ and, after a rotation, that $\nabla u_\Omega(x)$ is a positive multiple of $-e_N$. Then
\eq{
    \abs{\nabla u_\Omega(x)}=\frac{1}{\omega_{N-1}}\int_{\Omega-x}g(z)\,\rd z,
    \qquad g(z)=\frac{z_N}{\abs{z}^N}.
}
The function $g$ is homogeneous of degree $1-N$, and $\{g>1\}=K_N$, so $\{g>t\}=t^{-1/(N-1)}K_N$ for $t>0$. As $t$ decreases from $\infty$ to $0$, these sets increase from $\emptyset$ to $\{z_N>0\}$. Every level set of $g$ has measure zero.

Choose $t>0$ such that $\abs{\{g>t\}}=\abs{\Omega}$, i.e.\ $\{g>t\}=\lambda K_N$ with $\lambda=(\abs{\Omega}/\abs{K_N})^{1/N}$. For every measurable $A$ with $\abs{A}=\abs{\Omega}$,
\eq{
    \int_A g-\int_{\lambda K_N} g
    =\int_{A\backslash\lambda K_N}(g-t)-\int_{\lambda K_N\backslash A}(g-t)
    \leq0,
}
with equality if and only if $\abs{A\triangle\lambda K_N}=0$ (bathtub principle, \cite[Thm.~1.14]{LL}).

Since $g(\lambda z)=\lambda^{1-N}g(z)$, we have $\int_{\lambda K_N}g=\lambda\int_{K_N}g$, and in polar coordinates
\eq{
    \int_{K_N}g
    =\int_{\{\theta_N>0\}}\int_0^{\theta_N^{1/(N-1)}}\theta_N\,\rd\rho\,\rd\theta
    =\int_{\{\theta_N>0\}}\theta_N^{\,N/(N-1)}\,\rd\theta
    =N\abs{K_N}.
}
Therefore
$\abs{\nabla u_\Omega(x)}\leq\frac{N\abs{K_N}}{\omega_{N-1}}\lambda=\mu_N\abs{\Omega}^{1/N}$,
with the stated equality case; the rotation $R$ accounts for the direction of $\nabla u_\Omega(x)$.
\end{proof}

For the ball $\Omega^*$ of radius $r$ one has $\abs{\nabla u_{\Omega^*}(x)}=\abs{x}/N$ for $\abs{x}\leq r$ and $\abs{\Omega^*}/(\omega_{N-1}\abs{x}^{N-1})$ for $\abs{x}\geq r$, so $\sup\abs{\nabla u_{\Omega^*}}=r/N=\frac{1}{N}(\abs{\Omega}/s_N)^{1/N}$. Dividing, $\mu_N\abs{\Omega}^{1/N}=c_N\sup\abs{\nabla u_{\Omega^*}}$. Equality in \eqref{eq:martin} at $x=0$ for $\Omega=K_N$ shows that $c_N$ is sharp, and applying \eqref{eq:martin} to $\Omega^*$ shows $c_N\geq1$. The boundary of $K_N$ is $\rho=(\cos\varphi)^{1/(N-1)}$, while a sphere through $0$ with a center on the $e_N$-axis is $\rho=2a\cos\varphi$; these agree only for $N=2$. For $N\geq3$ the ball is therefore never an equality case of \eqref{eq:martin}, and since $\sup\abs{\nabla u_{\Omega^*}}$ is attained, $c_N>1$.

\iffalse
\begin{table}[ht]
\centering
\begin{tabular}{cccc}
\toprule
$N$ & $\abs{K_N}/s_N$ & $c_N$ & $c_N^2\,\frac{N+2}{N}$\\
\midrule
$2$ & $0.25$ & $1$ & $2$\\
$3$ & $0.2$ & $1.0260$ & $1.7544$\\
$4$ & $0.17394$ & $1.0774$ & $1.7410$\\
$5$ & $0.15686$ & $1.1360$ & $1.8068$\\
$6$ & $0.14440$ & $1.1962$ & $1.9078$\\
\bottomrule
\end{tabular}
\caption{Martin's constant $c_N$, and the loss factor incurred by integrating the pointwise inequality (Proposition~\ref{prop:compare}).}
\label{tab:martin}
\end{table}
\fi 

\begin{proposition}\label{prop:compare}
Let $N\geq2$. Then:
\begin{enumerate}[(a)]
\item Integrating \eqref{eq:martin} over $\Omega$ yields
\eq{
    \frac{1}{\abs{\Omega}}\int_\Omega \abs{\nabla u_\Omega}^2\,\rd x
    \leq c_N^2\,\frac{N+2}{N}\cdot\frac{1}{N(N+2)}\Big(\frac{\abs{\Omega}}{s_N}\Big)^{2/N}.
}
The factor $c_N^2\frac{N+2}{N}$ is strictly larger than $1$. For $N=2$ it equals $2$, as in the estimate obtained by integrating the Ahlfors--Beurling inequality in the introduction.

\item For $N=2$ both inequalities are attained by discs (the pointwise one at boundary points). For $N\geq3$ the extremal classes are disjoint: $K_N$ is not a ball, so by Theorem~\ref{thm:rigidity} it cannot be an equality case of Corollary~\ref{cor:ball}; conversely, the ball is not an equality case of Theorem~\ref{thm:martin}.
\end{enumerate}
\end{proposition}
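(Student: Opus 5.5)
For (a), integrate the square of \eqref{eq:martin} over $\Omega$, which gives $\int_\Omega\abs{\nabla u_\Omega}^2\leq\mu_N^2\abs{\Omega}^{1+2/N}$. Then rewrite $\mu_N$ via the identity recorded after Theorem~\ref{thm:martin}:
\[
\mu_N\abs{\Omega}^{1/N}=c_N\sup\abs{\nabla u_{\Omega^*}}=\frac{c_N}{N}\Big(\frac{\abs{\Omega}}{s_N}\Big)^{1/N}.
\]
Dividing by $\abs{\Omega}$ gives the mean-square bound $\frac{c_N^2}{N^2}(\abs{\Omega}/s_N)^{2/N}$. This equals $c_N^2\frac{N+2}{N}\cdot\frac{1}{N(N+2)}(\abs{\Omega}/s_N)^{2/N}$, which is the displayed form, with loss factor relative to Corollary~\ref{cor:ball} equal to $c_N^2(N+2)/N$.

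For strictness I will use the last assertion of Theorem~\ref{thm:martin}: $c_N\geq1$ for all $N$, with $c_2=1$. Hence $c_N^2(N+2)/N\geq(N+2)/N>1$. For $N=2$ the factor is $1\cdot4/2=2$, which matches the factor $2$ discussed in the introduction after \eqref{eq:FK}. This step needs no fine information about $\abs{K_N}$, since $(N+2)/N>1$ already suffices.

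For (b), when $N=2$ we have $K_2$ equal to the disc of radius $\tfrac12$ centred at $(0,\tfrac12)$. So the equality case of Theorem~\ref{thm:martin} at $x$ is a disc with $x$ on its boundary, while Corollary~\ref{cor:ball} is attained by discs by Theorem~\ref{thm:main}. When $N\geq3$, I will first check that $K_N$ is not a ball up to a null set. Its boundary in polar form is $\rho=(\cos\varphi)^{1/(N-1)}$. The set is rotation-invariant about the $e_N$-axis and contains $0$ on its boundary, so a ball agreeing with it would have to have its centre on the axis and $0$ on its boundary, i.e.\ boundary $\rho=2a\cos\varphi$. These two curves differ for $N\geq3$: the exponents $1/(N-1)$ and $1$ disagree, so no choice of $a$ makes them agree near $\varphi=\pi/2$. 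Two open sets of this star-shaped form agreeing a.e.\ must have equal radial functions, so $K_N$ is not a ball a.e. Theorem~\ref{thm:rigidity} then excludes $K_N$, and all its rotated and dilated translates, from equality in Theorem~\ref{thm:main}, hence from equality in Corollary~\ref{cor:ball}.

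Conversely, a ball is not $x+R(\lambda K_N)$ a.e.\ for any $x$, by the same comparison, so it never attains \eqref{eq:martin}. The only mildly delicate point in the whole argument is this identification of a.e.\ equality with equality of radial functions, which holds because both radial functions are continuous.
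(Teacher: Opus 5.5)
Your proposal is correct and follows the paper's own argument. You square and integrate \eqref{eq:martin}, substitute $\mu_N^2\abs{\Omega}^{2/N}=c_N^2N^{-2}(\abs{\Omega}/s_N)^{2/N}$, and use $c_N\geq1$ for strictness. For (b) you compare the radial functions $(\cos\varphi)^{1/(N-1)}$ and $2a\cos\varphi$, exactly as in the discussion after Theorem~\ref{thm:martin}. Your claim that a ball agreeing a.e.\ with $K_N$ must have its centre on the axis and $0$ on its boundary is asserted without proof. It follows because $K_N$ is open and convex ($z\mapsto\abs{z}^N-z_N$ is convex), and two open convex sets agreeing a.e.\ coincide, as shown in the proof of Theorem~\ref{thm:rigidity}.
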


\begin{proof}
(a)
Squaring \eqref{eq:martin} and integrating over $\Omega$ gives
\eq{
    \frac{1}{\abs{\Omega}}\int_\Omega \abs{\nabla u_\Omega}^2\,\rd x\leq \mu_N^2\abs{\Omega}^{2/N}.
}
Substituting $\mu_N^2\abs{\Omega}^{2/N}=c_N^2N^{-2}(\abs{\Omega}/s_N)^{2/N}$ gives the claim. The loss factor is strictly larger than $1$ since $c_N\geq1$.

 (b)  was already noted in the proof of Theorem~\ref{thm:martin}.
\end{proof}

%The set $K_N$ is convex, since $z\mapsto\abs{z}^N-z_N$ is convex. For $N=3$, the ray representation in Lemma~\ref{lem:rays} and axisymmetric numerical quadrature give\eq{    \frac{\Ein(K_3)}{\Ein(K_3^*)}\approx0.9866,  \qquad  \frac{\Ein(K_3)}{\E(K_3)}\approx0.1651<\tfrac{1}{6}.}On the other hand,
%\eq{   \sup\abs{\nabla u_{K_3}}=c_3\sup\abs{\nabla u_{K_3^*}}\approx1.026\,\sup\abs{\nabla u_{K_3^*}}.}
%For this example the maximum field is about $2.6\%$ larger than that of the equal-volume ball, while the interior energy is about $1.3\%$ smaller.

\begin{remark}\label{rem:Lp}
For $1\leq p<\infty$, set
\eq{
    M_p(\Omega)=\Big(\frac{1}{\abs{\Omega}}\int_\Omega \abs{\nabla u_\Omega}^p\,\rd x\Big)^{1/p},
    \qquad
    M_\infty(\Omega)=\operatorname*{ess\,sup}_{x\in\Omega}\abs{\nabla u_\Omega(x)}.
}
By Corollary~\ref{cor:ball}, the ball maximizes $M_2$ at fixed volume, uniquely up to translation and sets of measure zero. For $N=2$, the Ahlfors--Beurling inequality implies that the disc also maximizes $M_\infty$.

Since $0\in\p K_N$ and the field is continuous, its value at $0$ is approached on subsets of $K_N$ of positive measure. The equality case of Theorem~\ref{thm:martin} therefore gives
\eq{
    M_\infty(K_N)=\mu_N\abs{K_N}^{1/N}=c_NM_\infty(K_N^*).
}
For $N\geq3$, this is strictly larger than $M_\infty(K_N^*)$. Since $M_p(\Omega)\to M_\infty(\Omega)$ as $p\to\infty$, the ball fails to maximize $M_p$ for all sufficiently large  $p$. 
It is an interesting to ask for which $p$ the ball is maximizer and for which the ball is not. %For $N=3$, numerical quadrature gives $M_p(K_3)/M_p(K_3^*)\approx0.9933$ at $p=2$, about $0.9994$ at $p=100$, and values above $1$ for $p$ near $110$ and larger sampled exponents. These computations indicate a transition for this family; the argument does not determine an exact critical exponent. In particular, the set of exponents for which the ball is optimal contains $2$ and is bounded above.
\end{remark}

\section{Densities and the trilinear inequality}\label{sec:densities}

We extend the inequality to densities. Let $f$ be measurable, with $0\leq f\leq1$ and $0<\int_{\R^N}f\,\rd x<\infty$. As in Section~\ref{sec:results}, define
\eq{
    U_f(x) \coloneqq
    \begin{cases}
    \displaystyle \frac{1}{2\pi}\int_{\R^2}\log\frac{\abs{x-y}}{1+\abs{y}}\,f(y)\,\rd y, & N=2,\\[8pt]
    \displaystyle (\Gamma*f)(x), & N\geq3.
    \end{cases}
}
The potential belongs to $C^1(\R^N)$ and satisfies
\eq{
    \nabla U_f(x) = (\nabla\Gamma*f)(x)
    = \frac{1}{\omega_{N-1}}\int_{\R^N}\frac{x-y}{\abs{x-y}^N}f(y)\,\rd y,
    \qquad \Delta U_f=f
}
in the distributional sense. On the line $p+\R\theta$, set
\eq{
    f_{\theta,p}(t) \coloneqq f(p+t\theta),
    \qquad \sigma_f(\theta,p) \coloneqq \int_\R f_{\theta,p}(t)\,\rd t.
}

\begin{theorem}\label{thm:densities}
Let $N\geq2$ and let $f$ be as above. Then
\eq{
    \int_{\R^N}f\abs{\nabla U_f}^2\,\rd x
    \leq \frac{1}{2N\omega_{N-1}}\iint_{\R^N\times\R^N}
    \frac{f(x)f(y)}{\abs{x-y}^{N-2}}\,\rd x\,\rd y,
}
with equality if and only if $f=\chi_B$ a.e.\ for some ball $B$. For $N\geq3$, the right-hand side equals
\eq{
    \frac{N-2}{2N}\int_{\R^N}\abs{\nabla U_f}^2\,\rd x,
}
while for $N=2$ it equals $(\int f)^2/(8\pi)$.
\end{theorem}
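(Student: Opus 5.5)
The plan is to repeat the ray/line argument of Theorems~\ref{thm:deficit} and~\ref{thm:main}, with $\chi_\Omega$ replaced by the density $f$. Since $0\le f\le1$, the only change is that the one-dimensional identity of Lemma~\ref{lem:1d}(c) becomes an inequality.

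\emph{Ray representation and Bessel.} For $x\in\R^N$ and $\theta\in\Sph$ I set $\lambda_f(x,\theta)=\int_0^\infty f(x+r\theta)\,\rd r$ and $\delta_f(x,\theta)=\lambda_f(x,\theta)-\lambda_f(x,-\theta)$. The polar-coordinates computation of Lemma~\ref{lem:rays} goes through verbatim and gives
\[
\nabla U_f(x)=-\frac{1}{2\omega_{N-1}}\int_{\Sph}\theta\,\delta_f(x,\theta)\,\rd\theta .
\]
It also gives integrability of $\lambda_f(x,\cdot)$, using $\int f(y)\abs{x-y}^{1-N}\,\rd y<\infty$, which follows from the bathtub bound of Lemma~\ref{lem:pot}(a) since $f\le1$. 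The projection formula \eqref{eq:P1} then yields $\abs{\nabla U_f(x)}^2=\frac{1}{4N\omega_{N-1}}\norm{P_1\delta_f(x,\cdot)}^2$. Whenever $\delta_f(x,\cdot)\in L^2(\Sph)$, Bessel's inequality gives
\[
\abs{\nabla U_f(x)}^2\le\frac{1}{4N\omega_{N-1}}\norm{\delta_f(x,\cdot)}^2 .
\]

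\emph{Weighted line integration.} I multiply the Bessel bound by $f(x)$ and integrate, writing $x=p+t\theta$. On the line $p+\R\theta$ put $F(t)=\int_{-\infty}^t f_{\theta,p}$ and $\sigma=\sigma_f(\theta,p)$, so that $\delta_f(p+t\theta,\theta)=\sigma-2F(t)$. Since $F'=f_{\theta,p}$ a.e., the change-of-variables formula of Lemma~\ref{lem:1d}(a) holds with weight $f_{\theta,p}$ in place of $\chi_E$. Hence $\int f_{\theta,p}(t)(\sigma-2F(t))^2\,\rd t=\sigma^3/3$ and, likewise,
\[
\iint f_{\theta,p}(s)f_{\theta,p}(t)\abs{F(s)-F(t)}\,\rd s\,\rd t=\sigma^3/3 .
\]
Because $f\le1$, $F$ is $1$-Lipschitz, so $\abs{F(s)-F(t)}\le\abs{s-t}$. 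This gives
\[
\sigma^3/3\le\iint f_{\theta,p}(s)f_{\theta,p}(t)\abs{s-t}\,\rd s\,\rd t .
\]
Lemma~\ref{lem:BP} with $\Phi=f(x)f(y)\abs{x-y}^{2-N}$ turns the right side, integrated over $(\theta,p)$ and halved, into the interaction integral. Chaining these gives
\[
\int f\abs{\nabla U_f}^2\le\frac{1}{12N\omega_{N-1}}\int_{\Sph}\int_{\theta^\perp}\sigma_f^3\,\rd p\,\rd\theta\le\frac{1}{2N\omega_{N-1}}\iint\frac{f(x)f(y)}{\abs{x-y}^{N-2}}\,\rd x\,\rd y .
\]
As a by-product, the middle quantity is finite, so $\delta_f(x,\cdot)\in L^2(\Sph)$ for $f\,\rd x$-a.e.\ $x$, which justifies Bessel where needed. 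The identifications of the right-hand side are as before. For $N\ge3$, it equals $\frac{N-2}{2N}\int\abs{\nabla U_f}^2$ by the polarized Green identity in the proof of Lemma~\ref{lem:pot}(c), using truncations $f\chi_{B_k}$. For $N=2$, the kernel is $1$, so the right-hand side is $(\int f)^2/(8\pi)$. For Theorem~\ref{thm:tri}, both sides are homogeneous of degree three in $f$, so I apply the result to $f/\norm{f}_{L^\infty}$.

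\emph{Equality, the main obstacle.} If equality holds, then $\iint f_{\theta,p}(s)f_{\theta,p}(t)\big(\abs{s-t}-\abs{F(s)-F(t)}\big)=0$ for a.e.\ $(\theta,p)$. For $s<t$ the integrand equals $\int_s^t(1-f_{\theta,p})\ge0$. So for $f\otimes f$-a.e.\ pairs $s<t$ on the line, $f_{\theta,p}=1$ a.e.\ on $(s,t)$. I then argue as in Lemma~\ref{lem:1d}(c), using density points of $\{f_{\theta,p}>0\}$ with respect to the measure $f_{\theta,p}\,\rd t$. The conclusion should be that $f_{\theta,p}$ is a.e.\ the indicator of an interval; the care needed is that $f$ may be small but positive near the ends, and there I would use the pair condition with $s$ and $t$ near the extreme density points. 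Fubini over lines then gives $f\in\{0,1\}$ a.e., i.e.\ $f=\chi_\Omega$, and Theorem~\ref{thm:rigidity} forces $\Omega$ to be a ball. Conversely, balls give equality by Theorem~\ref{thm:main}.
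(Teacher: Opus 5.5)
Your proposal is correct and follows the paper's proof essentially step for step: the ray representation with Bessel's inequality, the weighted change of variables $\int h(\sigma-2F)^2\,\rd t=\sigma^3/3$ on lines, the bound $\abs{F(s)-F(t)}\leq\abs{s-t}$ from $f\leq1$ combined with Lemma~\ref{lem:BP}, and, for equality, the reduction to $f\in\{0,1\}$ a.e.\ followed by Theorem~\ref{thm:rigidity}. The only difference is cosmetic and lies in the equality step. The paper uses Tonelli to write $\iint h(s)h(t)\big(\abs{s-t}-\abs{F(s)-F(t)}\big)\,\rd s\,\rd t=2\int(1-h(r))F(r)(\sigma-F(r))\,\rd r$. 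This gives at once $h=1$ a.e.\ on the interval $\{0<F<\sigma\}$ and $h=F'=0$ a.e.\ outside it, so the endpoint care you anticipate is not needed. Your pair argument, with $s$ and $t$ taken near the essential extremes of $\{h>0\}$, also handles it.
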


\begin{proof}
The proof of Lemma~\ref{lem:pot}(a)--(c) applies to $f$, using $0\leq f\leq1$ and truncating $f$ to $f\chi_{B_k}$ in the energy identity. In particular, all the energy integrals below are finite. Set
\eq{
    \Lambda_f(x,\theta) \coloneqq \int_0^\infty f(x+r\theta)\,\rd r,
    \qquad \delta_f(x,\theta) \coloneqq \Lambda_f(x,\theta)-\Lambda_f(x,-\theta).
}
The ray representation in Lemma~\ref{lem:rays} and the projection identity \eqref{eq:pointwise} hold with $\delta_f$ in place of $\delta$.

For almost every line $p+\R\theta$, the function $h\coloneqq f_{\theta,p}$ is integrable. Write $\sigma=\sigma_f(\theta,p)$ and set
\eq{
    F(t) \coloneqq \int_{-\infty}^t h(s)\,\rd s.
}
Then $F'=h$ a.e.\ and $\delta_f(p+t\theta,\theta)=\sigma-2F(t)$. The change-of-variables formula for absolutely continuous monotone functions gives
\eq{
    \int_\R h(t)(\sigma-2F(t))^2\,\rd t
    =\int_0^\sigma(\sigma-2a)^2\,\rd a
    =\frac{\sigma^3}{3}.
}
Integrating over the lines, we obtain
\eq{
    \int_{\R^N}f(x)\norm{\delta_f(x,\cdot)}^2\,\rd x
    =\frac{1}{3}\int_{\Sph}\int_{\theta^\perp}\sigma_f(\theta,p)^3\,\rd p\,\rd\theta.
}

On the other hand, Lemma~\ref{lem:BP} gives
\eq{
    &\iint_{\R^N\times\R^N}f(x)f(y)\abs{x-y}^{2-N}\,\rd x\,\rd y\\
    &\qquad=\frac{1}{2}\int_{\Sph}\int_{\theta^\perp}
    \iint_{\R^2}h(s)h(t)\abs{s-t}\,\rd s\,\rd t\,\rd p\,\rd\theta.
}
Since $0\leq F'=h\leq1$, we have $\abs{s-t}\geq\abs{F(s)-F(t)}$. Changing variables in both factors gives
\eq{
    \iint_{\R^2}h(s)h(t)\abs{F(s)-F(t)}\,\rd s\,\rd t
    =\iint_{[0,\sigma]^2}\abs{a-b}\,\rd a\,\rd b
    =\frac{\sigma^3}{3}.
}
These identities also show that $\int f\norm{\delta_f(x,\cdot)}^2\,\rd x$ is finite. We may therefore integrate the projection identity. As in Section~\ref{sec:proof}, the deficit is the sum of
\eq{
    \frac{1}{4N\omega_{N-1}}\int_{\R^N}f(x)
    \norm{\delta_f(x,\cdot)-P_1\delta_f(x,\cdot)}^2\,\rd x
}
and the nonnegative term
\eq{
    \frac{1}{4N\omega_{N-1}}\int_{\Sph}\int_{\theta^\perp}
    \iint_{\R^2}h(s)h(t)
    \big(\abs{s-t}-\abs{F(s)-F(t)}\big)\,\rd s\,\rd t\,\rd p\,\rd\theta.
}

Both terms are nonnegative, which proves the inequality.

Suppose equality holds. The last double integral vanishes for almost every line. Tonelli's theorem gives
\eq{
    &\iint_{\R^2}h(s)h(t)
    \big(\abs{s-t}-\abs{F(s)-F(t)}\big)\,\rd s\,\rd t\\
    &\qquad=2\int_\R(1-h(r))F(r)(\sigma-F(r))\,\rd r.
}
If $\sigma>0$, then $h=1$ a.e.\ on the interval $\{0<F<\sigma\}$ and $h=F'=0$ a.e.\ outside it. Thus $h$ is the characteristic function of an interval, up to a null set. If $\sigma=0$, then $h=0$ a.e.\ Choose a direction for which this conclusion holds for almost every $p$. Fubini's theorem in the coordinates $x=p+t\theta$ gives $f\in\{0,1\}$ a.e.\ Hence $f=\chi_\Omega$ a.e.\ for a measurable set $\Omega$ of finite positive measure, and Theorem~\ref{thm:rigidity} implies that $\Omega$ is a ball up to a null set. Conversely, balls attain equality by Theorem~\ref{thm:main}.

The two expressions for the right-hand side follow from the energy identity in Lemma~\ref{lem:pot}(c) and from $\abs{x-y}^0=1$.
\end{proof}

\begin{proof}[Proof of Theorem~\ref{thm:tri}]
The case $f=0$ is immediate. For $f\ne0$, apply Theorem~\ref{thm:densities} to $f/\norm{f}_{L^\infty}$. The left-hand side is cubic in $f$ and the double integral is quadratic, which gives the stated bound.

For $N\geq3$, we have
\eq{
    \norm{f}_{\dot H^{-1}}^2
    =\int_{\R^N}\abs{\nabla(-\Delta)^{-1}f}^2\,\rd x
    =\int_{\R^N}\abs{\nabla U_f}^2\,\rd x.
}
For $N=2$, the identity $\abs{\Cc f}=2\abs{\nabla U_f}$ follows by comparing the kernels. Equality holds if and only if $f/\norm{f}_{L^\infty}=\chi_B$ a.e., as claimed.
\end{proof}

\begin{remark}
Let $N\geq3$ and let $f\in L^1\cap L^\infty(\R^N)$. The H\"older inequality gives
\eq{
    \int_{\R^N}\abs{f}\abs{\nabla U_f}^2\,\rd x
    \leq\norm{f}_{L^\infty}\norm{f}_{\dot H^{-1}}^2.
}
The constant $1$ is attained by signed densities, whereas Theorem~\ref{thm:tri} gives the smaller sharp constant $(N-2)/(2N)$ for nonnegative densities.

To see that $1$ is sharp for signed densities, let $a=2^{-1/N}$, $c>0$, and
\eq{
    f\coloneqq c\big(\chi_{\{a<\abs{x}<1\}}-\chi_{\{\abs{x}<a\}}\big).
}
Since
\eq{
    \int_{B_1}f\,\rd x=c(1-2a^N)\abs{B_1}=0,
}
the radial solution of $\Delta g=f$ satisfies
\eq{
    g'(r)=r^{1-N}\int_0^r s^{N-1}f(s)\,\rd s=0\qquad(r\geq1).
}
Choosing the additive constant so that $g=0$ outside $B_1$ gives a $C^1$ solution. It equals $U_f$ by decay at infinity and uniqueness. Since $\abs{f}=c$ a.e.\ on $\{\nabla g\ne0\}$,
\eq{
    \int_{\R^N}\abs{f}\abs{\nabla U_f}^2\,\rd x
    =c\int_{\R^N}\abs{\nabla g}^2\,\rd x
    =\norm{f}_{L^\infty}\norm{f}_{\dot H^{-1}}^2.
}
Thus all of the field energy lies in the support of $f$.
\end{remark}

\section{Applications}\label{sec:apps}

\subsection{Brownian motion}

\begin{proof}[Proof of Theorem~\ref{thm:bm}]
Let $w_\Omega\in W_0^{1,2}(\Omega)$ be the torsion function. Its probabilistic representation gives
\eq{
    \mathbb{E}_x[\tau_\Omega]=2w_\Omega(x),
    \qquad \frac{1}{2}\Delta(2w_\Omega)=-1\quad\text{in }\Omega.
}
Moreover,
\eq{
    \mathbb{E}_x[L_\Omega]=\int_\Omega G(x,y)\,\rd y,
    \qquad G(x,y)=\frac{2}{(N-2)\omega_{N-1}}\abs{x-y}^{2-N},
}
where $G$ is the occupation kernel of Brownian motion on $\R^N$. Averaging over $x\in\Omega$ and applying Lemma~\ref{lem:pot}(c), we obtain
\eq{
    \abs{\Omega}\,\mathbb{E}[\tau_\Omega]&=2T(\Omega),\\
    \abs{\Omega}\,\mathbb{E}[L_\Omega]
    &=\frac{2}{(N-2)\omega_{N-1}}\iint_{\Omega\times\Omega}\abs{x-y}^{2-N}\,\rd x\,\rd y
    =2\E(\Omega).
}
Hence the claimed inequality is equivalent to
\eq{
    T(\Omega)\leq\frac{N-2}{2N}\E(\Omega),
}
which follows from Corollary~\ref{cor:SV}.

If equality holds, then both inequalities in $T(\Omega)\leq\Ein(\Omega)\leq(N-2)\E(\Omega)/(2N)$ are equalities. By Theorem~\ref{thm:rigidity}, $\Omega$ coincides a.e.\ with a ball $B$. Since $\Omega$ is open, we have $\Omega\subset B$, and
\eq{
    T(\Omega)=\Ein(\Omega)=\Ein(B)=T(B).
}
The variational characterization of torsion in $W_0^{1,2}(B)$ now shows that the zero extension of $w_\Omega$ equals $w_B$. Since $w_B>0$ in $B$ and $w_\Omega=0$ quasi-everywhere on $B\backslash\Omega$, this set has capacity zero. Conversely, removing a set of capacity zero from a ball changes neither torsion nor occupation time, and hence preserves equality.
\end{proof}

For an ellipsoid in $\R^3$ with semi-axes $a_k$ and depolarization factors $D_k$, the torsion function and Example~\ref{ex:ellipsoids} give
\eq{
    \frac{\mathbb{E}[\tau_\Omega]}{\mathbb{E}[L_\Omega]}
    =\frac{T(\Omega)}{\E(\Omega)}
    =\Big(2\sum_k a_k^{-2}\sum_k D_ka_k^2\Big)^{-1}.
}
%For semi-axes $(2,1,1)$, $(1,1,\tfrac{1}{2})$, $(3,2,1)$ and $(1,1,\tfrac{1}{5})$, the values are approximately $0.1461$, $0.1378$, $0.1204$ and $0.0662$, respectively. The value for a ball is $1/6$.

\begin{remark}\label{rem:harmonic}
Let $N\geq3$ and set $g(x)=\mathbb{E}_x[L_\Omega]=-2u_\Omega(x)$. Let $\omega_x$ denote harmonic measure for $\Omega$ viewed from $x$. The strong Markov property gives
\eq{
    \mathbb{E}_x[L_\Omega-\tau_\Omega]
    =\int_{\p\Omega}g\,\rd\omega_x.
}
The right-hand side is the probabilistic harmonic extension of $g|_{\p\Omega}$, namely $-2(u_\Omega+w_\Omega)$. Thus Theorem~\ref{thm:bm} is equivalent to
\eq{
    \int_\Omega\int_{\p\Omega}g\,\rd\omega_x\,\rd x
    \geq\frac{N+2}{2N}\int_\Omega g\,\rd x,
}
with the same equality case. Thus the expected occupation time after the first exit, averaged over the starting point in $\Omega$, is at least $(N+2)/(2N)$ of the total expected occupation time.
\end{remark}

For $N=2$, Brownian motion is recurrent and $\mathbb{E}[L_\Omega]=\infty$. In this case, the inequality $T(\Omega)\leq\abs{\Omega}^2/(8\pi)$ is the Saint-Venant inequality for $\mathbb{E}[\tau_\Omega]$.

\subsection{Aggregation patches}

Let $\rho_0>0$, let $0<\gamma<1$, and let $\Omega_0$ be a nonempty bounded open set with $C^{1,\gamma}$ boundary. We consider the patch solution $\rho(t)\chi_{\Omega_t}$ of \eqref{eq:agg} with initial density $\rho_0\chi_{\Omega_0}$. The boundary regularity persists for $0\leq t<1/\rho_0$ under aggregation and for every finite time under spreading; see \cite{BGLV}. The identities below hold throughout these intervals.

All densities in the energy computations have compact support, uniformly on compact time intervals. We use the unnormalized potential $U_\rho=\Gamma*\rho$, also when $N=2$. It has the same gradient as the potential in Section~\ref{sec:densities}. This choice of additive constant fixes the interaction energy.

For a bounded planar set, define
\eq{
    L(\Omega)\coloneqq\frac{1}{2\pi}
    \iint_{\Omega\times\Omega}\log\frac{1}{\abs{x-y}}\,\rd x\,\rd y.
}
Choose $R$ larger than the diameters of $\Omega$ and $\Omega^*$, and set
\eq{
    g(z)\coloneqq
    \begin{cases}
    \displaystyle\log\frac{2R}{\abs{z}}, & \abs{z}\leq R,\\[6pt]
    \displaystyle(\log2)\frac{R}{\abs{z}}, & \abs{z}>R.
    \end{cases}
}
The function $g$ is positive, strictly symmetric decreasing, and tends to zero at infinity. Since
\eq{
    \iint_{\Omega\times\Omega}g(x-y)\,\rd x\,\rd y
    =\abs{\Omega}^2\log(2R)+2\pi L(\Omega),
}
and the same identity holds for $\Omega^*$, Theorem~\ref{thm:RS} gives $L(\Omega)\leq L(\Omega^*)$. Equality holds only for discs, up to a null set. A direct calculation for a disc $D$ of radius $r$ gives
\eq{
    L(D)=\frac{\abs{D}^2}{2\pi}\Big(\log\frac{1}{r}+\frac{1}{4}\Big).
}

\begin{theorem}\label{thm:agg}
Let $\rho(t)\chi_{\Omega_t}$ be the patch solution above, with $\rho_0>0$, and let $\mathcal{D}$ be the deficit in \eqref{eq:main}.
\begin{enumerate}[(a)]
\item If $N\geq3$, define
\eq{
    R(t)\coloneqq\frac{\E(\Omega_t)}{\E(\Omega_t^*)}\in(0,1].
}
Then
\eq{
    \frac{\rd}{\rd t}\log R(t)
    =\mp2\rho(t)\frac{\mathcal{D}(\Omega_t)}{\E(\Omega_t)},
}
with $-$ for aggregation and $+$ for spreading.
\item If $N=2$, define
\eq{
    J(t)\coloneqq\frac{L(\Omega_t^*)-L(\Omega_t)}{\abs{\Omega_t}^2}\geq0.
}
Then
\eq{
    \frac{\rd}{\rd t}J(t)
    =\pm2\rho(t)\frac{\mathcal{D}(\Omega_t)}{\abs{\Omega_t}^2},
}
with $+$ for aggregation and $-$ for spreading.
\end{enumerate}
Under spreading, $R$ increases and $J$ decreases; under aggregation, $R$ decreases and $J$ increases. Their derivatives vanish exactly when $\Omega_t$ is a ball. Both quantities are scale invariant, with $R=1$ and $J=0$ exactly for balls.
\end{theorem}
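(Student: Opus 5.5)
The plan is to differentiate the interaction energy along the patch evolution and recognise the dissipation as the trilinear integral from Theorem~\ref{thm:tri}. Mass conservation gives $\rho(t)\abs{\Omega_t}=\rho_0\abs{\Omega_0}=:m$, and with the sign convention $v=\mp\nabla U_\rho$ the density satisfies $\dot\rho=\pm\rho^2$: along characteristics $\dot\rho=-\rho\,\rdiv v=\pm\rho\Delta U_\rho=\pm\rho^2$. Hence $\frac{\rd}{\rd t}\abs{\Omega_t}=\mp\rho\abs{\Omega_t}$.

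\emph{Case $N\geq3$.} Put $\E_\rho(t)=\int\abs{\nabla U_\rho}^2=\rho^2\E(\Omega_t)$. Using $\p_t\rho+\rdiv(\rho v)=0$ and $\E_\rho=-\int\rho U_\rho$ (Lemma~\ref{lem:pot}(c)), I compute
\[
\frac{\rd}{\rd t}\E_\rho=-2\int U_\rho\,\p_t\rho=-2\int\rho\, v\cdot\nabla U_\rho=\pm2\int\rho\abs{\nabla U_\rho}^2=\pm2\rho^3\Ein(\Omega_t).
\]
This needs the $C^{1,\gamma}$ patch regularity from \cite{BGLV} so that $\p_t(\rho\chi_{\Omega_t})$ acts on the $C^1$ function $U_\rho$ as a boundary transport term; that is, the Reynolds transport theorem. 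Then I write $\log R=\log\E(\Omega_t)-\log\E(\Omega_t^*)$. Since $\E(\Omega^*)=C\abs{\Omega}^{(N+2)/N}$ by scaling, $\frac{\rd}{\rd t}\log\E(\Omega_t^*)=\frac{N+2}{N}(\mp\rho)$. Next, $\log\E(\Omega_t)=\log\E_\rho-2\log\rho$, whose derivative is
\[
\pm2\rho\,\frac{\Ein(\Omega_t)}{\E(\Omega_t)}\mp2\rho .
\]
Subtracting gives
\[
\frac{\rd}{\rd t}\log R=\pm2\rho\Big(\frac{\Ein}{\E}-1+\frac{N+2}{2N}\Big)=\pm2\rho\,\frac{\Ein-\frac{N-2}{2N}\E}{\E}=\mp2\rho\,\frac{\mathcal{D}(\Omega_t)}{\E(\Omega_t)},
\]
using $\frac{N-2}{2N}\E=I/(2N\omega_{N-1})$.

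\emph{Case $N=2$.} The computation is the same with $\E_\rho$ replaced by the interaction energy $\rho^2L(\Omega_t)=-\int\rho U_\rho$. Its derivative is $\mp2\rho^3\Ein(\Omega_t)$ with the sign flipped relative to the case $N\ge3$, because $L$ is minus the Dirichlet-type energy. I expect this gives $\frac{\rd}{\rd t}L(\Omega_t)=\mp2\rho\Ein(\Omega_t)\pm2\rho L(\Omega_t)$. For the disc, the explicit formula
\[
L(D)=\frac{\abs{D}^2}{2\pi}\big(\log r^{-1}+\tfrac14\big),\qquad r=(\abs{\Omega}/\pi)^{1/2},
\]
gives $L(\Omega_t^*)$ directly. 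Then
\[
\frac{\rd}{\rd t}\big(L(\Omega^*)/\abs{\Omega}^2\big)=\frac{1}{2\pi}\cdot\Big(-\frac{1}{2}\Big)\frac{\rd}{\rd t}\log\abs{\Omega}=\pm\frac{\rho}{4\pi}.
\]
Also $\frac{\rd}{\rd t}\big(L(\Omega)/\abs{\Omega}^2\big)=\big(\dot L\pm2\rho L\big)/\abs{\Omega}^2$. I have to recheck this signwise: the terms in $L$ must cancel, and the claimed identity forces the combination
\[
\frac{\rd}{\rd t}J=\pm2\rho\,\frac{\abs{\Omega}^2/(8\pi)-\Ein}{\abs{\Omega}^2},
\]
since $\abs{\Omega}^2/(8\pi)=I/(4\omega_1)$. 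Carefully tracking the factor $\rho^{-2}$ in $L=\rho^{-2}(-\int\rho U_\rho)$ is the bookkeeping to verify.

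The final assertions then follow from results already in the paper. Nonnegativity of $\mathcal{D}$ comes from Theorem~\ref{thm:main}, and its vanishing only on balls from Theorem~\ref{thm:rigidity}; a $C^{1,\gamma}$ domain a.e.\ equal to a ball is that ball. Riesz--Sobolev gives $R\le1$ and $J\ge0$, with equality exactly for balls. Scale invariance is evident from homogeneity.

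The main obstacle is justifying the energy derivative for patches: differentiating $\iint\chi_{\Omega_t}\chi_{\Omega_t}k$ when the kernel $k$ is singular, with only $C^{1,\gamma}$ moving boundaries. I would first prove it for a flow map $X_t$ that is a $C^1$ diffeomorphism, by writing the energy as $\iint k(X_t(a)-X_t(b))\,J_t(a)J_t(b)\,\rd a\,\rd b$ and differentiating under the integral. The singularity is then integrable because $\nabla k\in L^1_{\rm loc}$ and the velocity is Lipschitz.
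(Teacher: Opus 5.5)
The case $N\geq3$ is correct and is essentially the paper's proof. You use the same characteristic computation $\dot\rho=\pm\rho^2$, the same identity $\frac{\rd}{\rd t}\bigl(-\int U_\rho\rho\bigr)=\pm2\rho^3\Ein(\Omega_t)$, and the same scaling of $\E(\Omega_t^*)$. Differentiating $\log\E(\Omega_t)$ instead of $\log W$, with $W=\rho^2\E(\Omega_t)$, is only a regrouping.

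\textbf{The gap is in part (b).} You claim that $\rho^2L(\Omega_t)=-\int U_\rho\rho\,\rd x$ has derivative $\mp2\rho^3\Ein(\Omega_t)$, ``with the sign flipped because $L$ is minus the Dirichlet-type energy''. This is false. $\rho^2L(\Omega_t)$ is literally the same functional $W=-\int U_\rho\rho\,\rd x$ that you differentiated for $N\geq3$. The computation
$\frac{\rd W}{\rd t}=-2\int U_\rho\,\p_t\rho=-2\int\rho\<v,\nabla U_\rho\>=\pm2\int\rho\abs{\nabla U_\rho}^2$
uses only the continuity equation, the symmetry of the kernel, and $v=\mp\nabla U_\rho$. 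It therefore holds verbatim for $N=2$, with the same sign.

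Your formula $\dot L=\mp2\rho\Ein\pm2\rho L$ is the exact negative of the correct $\dot L=\pm2\rho\Ein\mp2\rho L$. Its $L$-term does not even follow from your own claimed sign, which would give $\mp2\rho L$. Inserting it into your (correct) formula $\frac{\rd}{\rd t}(L/\abs{\Omega}^2)=(\dot L\pm2\rho L)/\abs{\Omega}^2$ gives $(\mp2\rho\Ein\pm4\rho L)/\abs{\Omega}^2$. The $L$-terms add instead of cancelling, and the $\Ein$-term has the wrong sign. Even the internally consistent version of your sign would give $\pm2\rho\bigl(\abs{\Omega}^2/(8\pi)+\Ein\bigr)/\abs{\Omega}^2$, which is not a multiple of $\mathcal{D}$. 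Writing down the combination that ``the claimed identity forces'' reverse-engineers the conclusion rather than proving it.

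\textbf{How to fix it.} With the correct sign, $\frac{\rd}{\rd t}(L/\abs{\Omega}^2)=\pm2\rho\Ein/\abs{\Omega}^2$. Subtracting this from your correct $\frac{\rd}{\rd t}(L(\Omega^*)/\abs{\Omega}^2)=\pm\rho/(4\pi)$ gives $\pm2\rho\bigl(\abs{\Omega}^2/(8\pi)-\Ein\bigr)/\abs{\Omega}^2=\pm2\rho\,\mathcal{D}/\abs{\Omega}^2$.

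The paper avoids the $\rho^{-2}$ bookkeeping altogether. Since $M=\rho\abs{\Omega_t}$ is conserved, $J=(W^*-W)/M^2$ with $W^*=\rho^2L(\Omega_t^*)=\frac{M^2}{2\pi}\bigl(\frac12\log\frac{\pi\rho}{M}+\frac14\bigr)$. Hence $\frac{\rd W^*}{\rd t}=\pm\frac{M^2}{4\pi}\rho=\pm2\rho^3\abs{\Omega_t}^2/(8\pi)$, and $\frac{\rd}{\rd t}(W^*-W)=\pm2\rho^3\mathcal{D}(\Omega_t)$ follows immediately.

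Your Lagrangian justification of the energy derivative is a reasonable, more detailed substitute for the paper's one-line integration by parts. It only needs the flow to be bi-Lipschitz, not a global $C^1$ diffeomorphism. Moreover, $\rho(t)J_t\equiv\rho_0$ on $\Omega_0$, so the Jacobian factors drop out. Your concluding assertions on monotonicity, equality cases and scale invariance match the paper.
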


\begin{proof}
Along particle trajectories,
\eq{
    \frac{\rD\rho}{\rD t}=-\rho\rdiv v=\pm\rho^2,
}
where the upper sign corresponds to aggregation. Hence the patch keeps uniform density
\eq{
    \rho(t)=\frac{\rho_0}{1\mp\rho_0t},
}
and $M\coloneqq\rho(t)\abs{\Omega_t}$ is conserved. Set
\eq{
    W(t)\coloneqq-\int_{\R^N}U_\rho\rho\,\rd x,
    \qquad U_\rho\coloneqq\Gamma*\rho.
}
Then $W=\rho^2\E(\Omega_t)$ for $N\geq3$ and $W=\rho^2L(\Omega_t)$ for $N=2$. By symmetry of the kernel and integration by parts,
\eq{
    \frac{\rd W}{\rd t}
    &=-2\int_{\R^N}U_\rho\p_t\rho\,\rd x
    =-2\int_{\R^N}\rho\<v,\nabla U_\rho\>\,\rd x\\
    &=\pm2\int_{\R^N}\rho\abs{\nabla U_\rho}^2\,\rd x
    =\pm2\rho^3\Ein(\Omega_t).
}

For (a), we have
\eq{
    \frac{\rd}{\rd t}\log W
    =\pm2\rho\frac{\Ein(\Omega_t)}{\E(\Omega_t)}.
}
For the comparison ball, $\E(\Omega_t^*)=\kappa_N\abs{\Omega_t}^{(N+2)/N}$, where $\kappa_N$ depends only on $N$. Since $\abs{\Omega_t}=M/\rho$,
\eq{
    \frac{\rd}{\rd t}\log\big(\rho^2\E(\Omega_t^*)\big)
    =\pm\Big(2-\frac{N+2}{N}\Big)\rho
    =\pm\frac{N-2}{N}\rho.
}
Using $R=W/(\rho^2\E(\Omega_t^*))$ and $\mathcal{D}=(N-2)\E/(2N)-\Ein$, subtraction gives
\eq{
    \frac{\rd}{\rd t}\log R
    =\mp\frac{2\rho\mathcal{D}(\Omega_t)}{\E(\Omega_t)}.
}
Moreover, $R\leq1$ by Theorem~\ref{thm:RS}.

For (b), set $W^*\coloneqq\rho^2L(\Omega_t^*)$. Writing $\abs{\Omega_t}=\pi r_t^2=M/\rho$, we have
\eq{
    W^*=\frac{M^2}{2\pi}\Big(\frac{1}{2}\log\frac{\pi\rho}{M}+\frac{1}{4}\Big).
}
Consequently,
\eq{
    \frac{\rd W^*}{\rd t}
    =\frac{M^2}{4\pi}\frac{\rho'}{\rho}
    =\pm\frac{M^2}{4\pi}\rho
    =\pm2\rho^3\frac{\abs{\Omega_t}^2}{8\pi}.
}
Thus
\eq{
    \frac{\rd}{\rd t}(W^*-W)=\pm2\rho^3\mathcal{D}(\Omega_t).
}
Dividing by the conserved mass $M^2=\rho^2\abs{\Omega_t}^2$ proves the formula for $J=(W^*-W)/M^2$.

The monotonicities and their strictness follow from Theorems~\ref{thm:main} and~\ref{thm:rigidity}. The scaling laws give scale invariance. The equality case of Theorem~\ref{thm:RS} gives $R=1$ or $J=0$ exactly for balls.
\end{proof}

\begin{corollary}\label{cor:aggint}
Let $\rho(t)\chi_{\Omega_t}$ be a spreading patch solution as in Theorem~\ref{thm:agg}, with $\rho_0>0$.
\begin{enumerate}[(a)]
\item If $N\geq3$, then
\eq{
    \int_0^\infty\rho(t)\frac{\mathcal{D}(\Omega_t)}{\E(\Omega_t)}\,\rd t
    \leq\frac{1}{2}\log\frac{1}{R(0)}.
}
\item If $N=2$, then
\eq{
    \int_0^\infty\rho(t)\frac{\mathcal{D}(\Omega_t)}{\abs{\Omega_t}^2}\,\rd t
    \leq\frac{1}{2}J(0).
}
\end{enumerate}
Since $\rho(t)=\rho_0/(1+\rho_0t)$ has divergent time integral, it follows that
\eq{
    \liminf_{t\to\infty}\frac{\mathcal{D}(\Omega_t)}{\E(\Omega_t)}&=0\quad(N\geq3),\\
    \liminf_{t\to\infty}\frac{\mathcal{D}(\Omega_t)}{\abs{\Omega_t}^2}&=0\quad(N=2).
}
\end{corollary}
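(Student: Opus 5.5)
The plan is to integrate the monotonicity identities of Theorem~\ref{thm:agg} over $[0,T]$ and let $T\to\infty$. The spreading branch of the patch solution is global in time, and the identities in Theorem~\ref{thm:agg} hold on every finite interval. So for each $T>0$ we have the exact identity
\[
\log R(T)-\log R(0)=2\int_0^T\rho(t)\frac{\mathcal{D}(\Omega_t)}{\E(\Omega_t)}\,\rd t\qquad(N\geq3),
\]
and, for $N=2$,
\[
J(0)-J(T)=2\int_0^T\rho(t)\frac{\mathcal{D}(\Omega_t)}{\abs{\Omega_t}^2}\,\rd t .
\]
To justify integrating the derivative, I will check that $t\mapsto R(t)$ and $t\mapsto J(t)$ are absolutely continuous (indeed $C^1$) on $[0,T]$. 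Theorem~\ref{thm:agg} already computes these derivatives through $W$ and $W^*$, which are differentiable in $t$ on the interval where the $C^{1,\gamma}$ regularity persists; that interval is all of $[0,\infty)$ under spreading.

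Next I use the a priori bounds. In part (a), Theorem~\ref{thm:RS} gives $R(T)\leq1$, so $\log R(T)\leq0$. Hence the integral over $[0,T]$ is at most $\tfrac12\log(1/R(0))$. Here $R(0)>0$ because $\E(\Omega_0)>0$. In part (b), $J(T)\geq0$ by the Riesz--Sobolev comparison $L(\Omega)\leq L(\Omega^*)$ established just before Theorem~\ref{thm:agg}, so the integral is at most $J(0)/2$. The integrands are nonnegative by Theorem~\ref{thm:main}. Monotone convergence as $T\to\infty$ then gives (a) and (b).

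For the liminf statements, suppose the liminf were some $c>0$ in either case. Then there is $t_0$ such that the ratio exceeds $c/2$ for all $t\geq t_0$. The integral would then dominate
\[
\frac c2\int_{t_0}^\infty\frac{\rho_0}{1+\rho_0t}\,\rd t=\infty,
\]
which contradicts finiteness. The liminf is nonnegative since $\mathcal{D}\geq0$, so it equals $0$.

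The only delicate point is the regularity needed to integrate the derivative formula, namely global existence of the spreading patch with the energy identities valid for all finite times. I will take this from \cite{BGLV} as stated in the paper. Everything else is routine.
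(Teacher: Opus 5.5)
Your proposal is correct and follows the paper's own argument: integrate the identities of Theorem~\ref{thm:agg} over $[0,T]$ with the signs you wrote for spreading, bound the boundary terms using $R(T)\leq1$ and $J(T)\geq0$, and let $T\to\infty$. The liminf statements then follow, as you argue, from the nonnegativity of the integrands and the divergence of $\int_0^\infty\rho(t)\,\rd t$.
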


\begin{proof}
Integrate the identities in Theorem~\ref{thm:agg} over $[0,T]$, use $R(T)\leq1$ and $J(T)\geq0$, and let $T\to\infty$. The liminf statements follow because a nonnegative function with finite $\rho(t)\,\rd t$ integral cannot be bounded away from zero when $\int_0^\infty\rho(t)\,\rd t=\infty$.
\end{proof}

For an ellipsoidal patch, the interior field is linear. The patch therefore remains ellipsoidal, and its semi-axes satisfy
\eq{
    a_k'=\mp\rho D_ka_k,
    \qquad \rho'=\pm\rho^2.
}
%Numerical integration starting from semi-axes $(2,1,0.7)$ in $\R^3$ with $\rho_0=1$ gives $R$ increasing from $0.928$ to $0.994$ by $t=30$ for a spreading patch. For aggregation, the same computation gives $R=0.740$ at $t=0.9$ and suggests $R\to0$ as $t\to1=1/\rho_0$. The proposed limit is numerical; it is consistent with the collapse scenarios studied in \cite{BLLe}.

\begin{remark}\label{rem:euler}
The area and logarithmic energy $L(\Omega_t)$ are conserved by the two-dimensional Euler equation for vortex patches. Hence $J$ in Theorem~\ref{thm:agg}(b) is constant along the Euler flow and monotone along \eqref{eq:agg}. The Euler flow is Hamiltonian. For the interaction energy $W=-\int U_\rho\rho$, spreading is Wasserstein gradient descent and aggregation is the reverse flow, after a constant rescaling of time. Theorem~\ref{thm:agg}(b) expresses the change of $J$ along these gradient dynamics in terms of the deficit in \eqref{eq:main}.
\end{remark}

\subsection{Energy bounds for bounded solutions}

The energy identity also holds for the bounded solutions constructed in \cite{BLLe}. We give the justification below, since the density need not be smooth.

\begin{corollary}\label{cor:energybounds}
Let $\rho\geq0$ be a solution of \eqref{eq:agg} transported by its velocity flow, with nonzero compactly supported initial data $\rho_0\in L^\infty(\R^N)$. Assume that $\rho$ is continuous in $L^1$, is locally bounded in $L^\infty$, and has support uniformly bounded on compact time intervals in its lifespan. Set
\eq{
    m_0\coloneqq\norm{\rho_0}_\infty,
    \qquad M\coloneqq\int_{\R^N}\rho\,\rd x,
    \qquad W(t)\coloneqq-\int_{\R^N}U_{\rho(t)}\rho(t)\,\rd x,
}
where $U_\rho=\Gamma*\rho$ is the unnormalized potential. The following bounds hold at every indicated time in the lifespan of the solution. For the solutions of \cite{BLLe}, this includes all $t\geq0$ under spreading and all $0\leq t<1/m_0$ under aggregation.
\begin{enumerate}[(a)]
\item If $N\geq3$, then for spreading solutions and $t\geq0$,
\eq{
    W(t)\geq W(0)(1+m_0t)^{-\frac{N-2}{N}},
}
while for aggregating solutions and $0\leq t<1/m_0$,
\eq{
    W(t)\leq W(0)(1-m_0t)^{-\frac{N-2}{N}}.
}
\item If $N=2$, then for spreading solutions and $t\geq0$,
\eq{
    W(t)\geq W(0)-\frac{M^2}{4\pi}\log(1+m_0t),
}
while for aggregating solutions and $0\leq t<1/m_0$,
\eq{
    W(t)\leq W(0)+\frac{M^2}{4\pi}\log\frac{1}{1-m_0t}.
}
\end{enumerate}
All four bounds are attained by a uniform ball patch of density $m_0$.
\end{corollary}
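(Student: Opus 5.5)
The plan is to derive a differential inequality for $W(t)$ from the energy dissipation identity and then close it with Theorem~\ref{thm:tri} and the evolution of $\norm{\rho(t)}_\infty$.

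\emph{Step 1: the dissipation identity.} Exactly as in the proof of Theorem~\ref{thm:agg}, we want
\[
\frac{\rd W}{\rd t}=\pm2\int_{\R^N}\rho\abs{\nabla U_\rho}^2\,\rd x,
\]
with $+$ for aggregation. For non-smooth bounded solutions we will justify this in weak form. Test the transport equation against $U_\rho$, which is $C^1$ with locally bounded gradient by Lemma~\ref{lem:pot}(a),(b) applied to densities. The supports stay in a fixed ball on compact time intervals, so the kernel can be cut off at a fixed large radius (for $N=2$ this also removes the additive-constant issue). The time regularity of $U_\rho$ will come from the $L^1$-continuity of $\rho$, combined with the bound $\abs{\nabla\Gamma}*\rho\lesssim\norm{\rho}_\infty^{(N-1)/N}\norm{\rho}_1^{1/N}$. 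We expect this justification to be the main technical step. The route we would try first is to write $W(t)-W(s)$ through the Lagrangian flow map $X_t$, so that $\int\varphi\,\rho(t)=\int\varphi(X_t)\rho_0$, and differentiate the double integral $\iint k(X_t(a)-X_t(b))\rho_0(a)\rho_0(b)\,\rd a\,\rd b$ in $t$. This uses that $\nabla U_\rho$ is log-Lipschitz, so the flow is well defined.

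\emph{Step 2: the $L^\infty$ bound.} Along trajectories, $\frac{\rD\rho}{\rD t}=\pm\rho^2$, so $\rho(t,X_t(a))=\rho_0(a)/(1\mp\rho_0(a)t)$. Since $s\mapsto s/(1\mp st)$ is increasing in $s$, this gives $\norm{\rho(t)}_\infty=m(t)\coloneqq m_0/(1\mp m_0t)$. We also have $m'=\pm m^2$.

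\emph{Step 3: closing the inequality.} By Theorem~\ref{thm:tri},
\[
\int\rho\abs{\nabla U_\rho}^2\le\frac{m(t)}{2N\omega_{N-1}}\iint\frac{\rho(x)\rho(y)}{\abs{x-y}^{N-2}}.
\]
For $N\ge3$ the right-hand side equals $m\frac{N-2}{2N}W$. For aggregation this gives $W'\le\frac{N-2}{N}mW=\frac{N-2}{N}\frac{m_0}{1-m_0t}W$, and integrating yields $W(t)\le W(0)(1-m_0t)^{-(N-2)/N}$. For spreading it gives $W'\ge-\frac{N-2}{N}\frac{m_0}{1+m_0t}W$, hence $W(t)\ge W(0)(1+m_0t)^{-(N-2)/N}$. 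For $N=2$ the right-hand side is $mM^2/(8\pi)$, since mass is conserved. Then $W'\lessgtr\pm\frac{M^2}{4\pi}\frac{m_0}{1\mp m_0t}$, and integrating gives the two logarithmic bounds.

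\emph{Step 4: attainment.} For a uniform ball patch of density $m_0$, the patch stays a ball with density $m(t)$, and Theorem~\ref{thm:tri} holds with equality at every time. Every inequality above is therefore an equality, which matches the computation in the proof of Theorem~\ref{thm:agg} (the terms $\rho^2\E(\Omega_t^*)$ and $W^*$).
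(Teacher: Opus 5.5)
Your proposal is correct and follows the paper's proof: the dissipation identity $\frac{\rd W}{\rd t}=\pm2\int\rho\abs{\nabla U_\rho}^2\,\rd x$, the characteristic bound $\norm{\rho(t)}_\infty\le m_0/(1\mp m_0t)$ (only this inequality is needed), Theorem~\ref{thm:tri}, integration in time, and equality for the ball patch. The only difference is how you justify the dissipation identity. The paper mollifies $\Gamma$ and passes to the limit using $\abs{W_\varepsilon-W}\lesssim\varepsilon^2$ and $\norm{\nabla\Gamma_\varepsilon*\rho-\nabla U_\rho}_\infty\lesssim\varepsilon$. Your Lagrangian route also works: for $a\ne b$ (the flow is injective), $\tau\mapsto\Gamma(X_\tau(a)-X_\tau(b))$ is absolutely continuous, and its derivative is bounded by $\frac{2\norm{v}_\infty}{\omega_{N-1}}\abs{X_\tau(a)-X_\tau(b)}^{1-N}$; after pushing forward, this bound is integrable against $\rho(\tau)\otimes\rho(\tau)\,\rd\tau$, so Fubini applies.
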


\begin{proof}
The characteristic formula, valid for the transported solutions under consideration (see \cite{BLLe}), gives
\eq{
    \rho(t,X_t(a))=\frac{\rho_0(a)}{1\mp t\rho_0(a)},
    \qquad
    \norm{\rho(t)}_\infty\leq\frac{m_0}{1\mp m_0t},
}
with the upper sign for aggregation.

To justify the energy identity, fix a compact time interval before blow-up. On this interval, $\rho\in C(L^1)\cap L^\infty$, its support stays in a fixed ball, and $v$ is bounded. Let $\eta_\varepsilon$ be a smooth radial mollifier supported in $B_\varepsilon$, set $\Gamma_\varepsilon=\Gamma*\eta_\varepsilon$, and define
\eq{
    W_\varepsilon=-\iint\Gamma_\varepsilon(x-y)\rho(x)\rho(y)\,\rd x\,\rd y.
}
Testing the continuity equation in both variables, with cutoffs equal to $1$ on the common support, gives in the sense of distributions in time
\eq{
    \frac{\rd W_\varepsilon}{\rd t}
    =-2\int\rho\<v,\nabla\Gamma_\varepsilon*\rho\>\,\rd x.
}
By radial symmetry, $\Gamma_\varepsilon=\Gamma$ outside $B_\varepsilon$. Direct integration on $B_\varepsilon$ gives
\eq{
    \abs{W_\varepsilon-W}\leq CM\norm{\rho}_\infty\varepsilon^2,
    \qquad
    \norm{\nabla\Gamma_\varepsilon*\rho-\nabla U_\rho}_\infty
    \leq C\norm{\rho}_\infty\varepsilon,
}
with constants uniform on the chosen time interval. Passing to the limit shows that $W$ is locally absolutely continuous and
\eq{
    \frac{\rd W}{\rd t}
    =\pm2\int_{\R^N}\rho\abs{\nabla U_\rho}^2\,\rd x
}
for almost every time.
For $N\geq3$, Theorem~\ref{thm:tri} gives
\eq{
    \Abs{\frac{\rd}{\rd t}\log W}
    \leq\frac{N-2}{N}\norm{\rho(t)}_\infty.
}
For $N=2$, it gives
\eq{
    \Abs{\frac{\rd W}{\rd t}}
    \leq\frac{M^2}{4\pi}\norm{\rho(t)}_\infty.
}
Integration in time proves the bounds. A uniform ball patch attains equality at every step. The zero solution satisfies the same bounds with $m_0=M=W=0$.
\end{proof}

The same trilinear integral occurs in the parabolic--elliptic Patlak--Keller--Segel system
\eq{
    \p_t\rho=\Delta\rho-\rdiv(\rho\nabla c),\qquad c=-U_\rho,\qquad-\Delta c=\rho.
}
For sufficiently regular decaying solutions, the interaction energy satisfies
\eq{
    \frac{\rd}{\rd t}\Big(-\frac{1}{2}\int\rho c\,\rd x\Big)
    =\int\rho^2\,\rd x-\int\rho\abs{\nabla c}^2\,\rd x.
}
Thus the trilinear integral is the contribution of chemotactic drift to interaction-energy dissipation; diffusion contributes the positive term $\int\rho^2$. For $N=2$, Theorem~\ref{thm:tri} gives
\eq{
    \int\rho\abs{\nabla c}^2\,\rd x
    \leq\frac{\norm{\rho}_{L^\infty}M^2}{8\pi},
}
with equality for a nonzero density if and only if it is a uniform disc.

\bigskip
\textbf{Acknowledgements. }
Parts of this manuscript benefited from assistance from Claude (Anthropic), a large language model, particularly in checking calculations and improving the exposition of some applications. The authors take full responsibility for the contents of the paper.

%\nocite{AhlforsBeurling50,ASZ,Al,AKL,BK,BLLe,BLR,BGLV,BLL,Bo,Br,Bu,Ca,CS,Dos,FK,FL,GGS20,GK,GT,Ka,Ga,Ke,KPS,KJ,Kl,Li,LL,Ma,OR,Polya48,PS,Ri,San,WP,So}
\printbibliography

@article{AhlforsBeurling50,
  author  = {Ahlfors, L. V. and Beurling, A.},
  title   = {Conformal invariants and function-theoretic null-sets},
  journal = {Acta Mathematica},
  volume  = {83},
  pages   = {101--129},
  year    = {1950}
}

@article{Polya48,
  title={Torsional rigidity, principal frequency, electrostatic capacity and symmetrization},
  author={P{\'o}lya, G.},
  journal={Quarterly of Applied Mathematics},
  volume={6},
  number={3},
  pages={267--277},
  year={1948}
}

@article{GGS20,
  title   = {Isoperimetric inequalities for {B}ergman analytic content},
  author  = {Gardiner, S. J. M. and Ghergu, M. and Sj{\"o}din, T.},
  journal = {Indiana Univ. Math. J.},
  volume  = {69},
  number  = {4},
  pages   = {1231--1249},
  year    = {2020}
}

@article{ASZ,
  author = {D. Aharonov and M. M. Schiffer and L. Zalcman},
  title = {{Potato kugel}},
  journal = {Israel J. Math.},
  volume = {40},
  year = {1981},
  pages = {331--339},
}

@article{Al,
  author = {H. Alexander},
  title = {{Projections of polynomial hulls}},
  journal = {J. Funct. Anal.},
  volume = {13},
  year = {1973},
  pages = {13--19},
}

@article{AKL,
  author = {J. M. Anderson and D. Khavinson and V. Lomonosov},
  title = {{Spectral properties of some integral operators arising in potential theory}},
  journal = {Quart. J. Math.},
  volume = {43},
  year = {1992},
  pages = {387--407},
}

@article{BK,
  author = {C. B\'en\'eteau and D. Khavinson},
  title = {{The isoperimetric inequality via approximation theory and free boundary problems}},
  journal = {Comput. Methods Funct. Theory},
  volume = {6},
  year = {2006},
  pages = {253--274},
}

@article{BLLe,
  author = {A. L. Bertozzi and T. Laurent and F. L\'eger},
  title = {{Aggregation and spreading via the Newtonian potential: the dynamics of patch solutions}},
  journal = {Math. Models Methods Appl. Sci.},
  volume = {22},
  year = {2012},
  pages = {1140005},
  note = {suppl.~1},
}

@article{BGLV,
  author = {A. L. Bertozzi and J. Garnett and T. Laurent and J. Verdera},
  title = {{The regularity of the boundary of a multidimensional aggregation patch}},
  journal = {SIAM J. Math. Anal.},
  volume = {48},
  year = {2016},
  pages = {3789--3819},
}

@article{BLL,
  author = {H. J. Brascamp and E. H. Lieb and J. M. Luttinger},
  title = {{A general rearrangement inequality for multiple integrals}},
  journal = {J. Funct. Anal.},
  volume = {17},
  year = {1974},
  pages = {227--237},
}

@article{Bo,
  author = {C. Borell},
  title = {{Greenian potentials and concavity}},
  journal = {Math. Ann.},
  volume = {272},
  year = {1985},
  pages = {155--160},
}

@article{Br,
  author = {L. Brasco},
  title = {{On torsional rigidity and principal frequencies: an invitation to the Kohler--Jobin rearrangement technique}},
  journal = {ESAIM Control Optim. Calc. Var.},
  volume = {20},
  year = {2014},
  pages = {315--338},
}

@article{Bu,
  author = {A. Burchard},
  title = {{Cases of equality in the Riesz rearrangement inequality}},
  journal = {Ann. of Math. (2)},
  volume = {143},
  year = {1996},
  pages = {499--527},
}

@article{Ca,
  author = {X. Cabr\'e},
  title = {{Elliptic PDEs in probability and geometry: symmetry and regularity of solutions}},
  journal = {Discrete Contin. Dyn. Syst.},
  volume = {20},
  year = {2008},
  pages = {425--457},
}

@misc{CS,
  author = {M. J. Colbrook and S. Sadeghi},
  title = {{A sharp isoperimetric inequality for the Neumann--Poincar\'e operator in every dimension}},
  year = {2026},
  eprint = {2608.20466},
  eprinttype = {arxiv},
  note = {Preprint},
}

@article{Dos,
  author = {M. R. Dostani\'c},
  title = {{The properties of the Cauchy transform on a bounded domain}},
  journal = {J. Operator Theory},
  volume = {36},
  year = {1996},
  pages = {233--247},
}

@incollection{FK,
  author = {M. Fleeman and D. Khavinson},
  title = {{Approximating $\bar z$ in the Bergman space}},
  booktitle = {Recent Progress on Operator Theory and Approximation in Spaces of Analytic Functions},
  series = {Contemporary Mathematics},
  volume = {679},
  publisher = {American Mathematical Society},
  year = {2016},
  pages = {79--90},
}

@article{FL,
  author = {M. Fleeman and E. Lundberg},
  title = {{The Bergman analytic content of planar domains}},
  journal = {Comput. Methods Funct. Theory},
  volume = {17},
  year = {2017},
  pages = {369--379},
}

@article{GK,
  author = {T. W. Gamelin and D. Khavinson},
  title = {{The isoperimetric inequality and rational approximation}},
  journal = {Amer. Math. Monthly},
  volume = {96},
  year = {1989},
  pages = {18--30},
}

@book{GT,
  author = {D. Gilbarg and N. S. Trudinger},
  title = {{Elliptic Partial Differential Equations of Second Order}},
  year = {2001},
  publisher = {Springer},
  series = {Classics in Mathematics},
  note = {Reprint of the 1998 edition},
}

@article{Ka,
  author = {D. Kalaj},
  title = {{The $L^2$ norm of the interior Cauchy transform: beyond the first Dirichlet eigenvalue}},
  journal = {Potential Anal.},
  volume = {65},
  year = {2026},
  eid = {26},
}

@book{Ga,
  author = {R. J. Gardner},
  title = {{Geometric Tomography}},
  year = {2006},
  publisher = {Cambridge University Press},
  location = {Cambridge},
  series = {Encyclopedia of Mathematics and its Applications},
  volume = {58},
  edition = {2},
}

@book{Ke,
  author = {O. D. Kellogg},
  title = {{Foundations of Potential Theory}},
  year = {1929},
  publisher = {Springer},
  location = {Berlin},
}

@article{KJ,
  author = {M.-T. Kohler--Jobin},
  title = {{Une m\'ethode de comparaison isop\'erim\'etrique de fonctionnelles de domaines de la physique math\'ematique~I}},
  journal = {Z. Angew. Math. Phys.},
  volume = {29},
  year = {1978},
  pages = {757--766},
}

@article{Kl,
  author = {M. S. Klamkin},
  title = {{Optimization problems in gravitational attraction}},
  journal = {J. Math. Anal. Appl.},
  volume = {37},
  year = {1972},
  pages = {239--254},
}

@article{Li,
  author = {E. H. Lieb},
  title = {{Existence and uniqueness of the minimizing solution of Choquard's nonlinear equation}},
  journal = {Stud. Appl. Math.},
  volume = {57},
  year = {1977},
  pages = {93--105},
}

@book{LL,
  author = {E. H. Lieb and M. Loss},
  title = {{Analysis}},
  year = {2001},
  publisher = {American Mathematical Society},
  series = {Graduate Studies in Mathematics},
  volume = {14},
  edition = {2},
}

@article{Ma,
  author = {M. Martin},
  title = {{Higher-dimensional Ahlfors--Beurling type inequalities in Clifford analysis}},
  journal = {Proc. Amer. Math. Soc.},
  volume = {126},
  year = {1998},
  pages = {2863--2871},
}

@article{OR,
  author = {J.-F. Olsen and M. C. Reguera},
  title = {{On a sharp estimate for Hankel operators and Putnam's inequality}},
  journal = {Rev. Mat. Iberoam.},
  volume = {32},
  year = {2016},
  pages = {495--510},
}

@book{PS,
  author = {G. P\'olya and G. Szeg\H{o}},
  title = {{Isoperimetric Inequalities in Mathematical Physics}},
  year = {1951},
  publisher = {Princeton University Press},
  series = {Annals of Mathematics Studies},
  volume = {27},
}

@article{Ri,
  author = {F. Riesz},
  title = {{Sur une in\'egalit\'e int\'egrale}},
  journal = {J. London Math. Soc.},
  volume = {5},
  year = {1930},
  pages = {162--168},
}

@book{San,
  author = {L. A. Santal\'o},
  title = {{Integral Geometry and Geometric Probability}},
  year = {1976},
  publisher = {Addison--Wesley},
  series = {Encyclopedia of Mathematics and its Applications},
  volume = {1},
}

@article{So,
  author = {S. L. Sobolev},
  title = {{On a theorem of functional analysis}},
  journal = {Mat. Sb. (N.S.)},
  volume = {4(46)},
  year = {1938},
  pages = {471--497},
  note = {English transl. in Amer. Math. Soc. Transl. (2) \textbf{34} (1963), 39--68},
}

\end{document}